\documentclass[10pt]{amsart}
\usepackage{amsmath}
\usepackage[usenames,dvipsnames]{color}
\usepackage{parskip}
\usepackage{hyperref}
\usepackage{amsfonts}
\usepackage{amscd}
\usepackage[centertags]{amsmath}
\usepackage{amssymb}
\usepackage[all,cmtip]{xy}
\usepackage[english]{babel}
\usepackage{tabularx}
\usepackage{mathtools}
\usepackage{amsxtra}
\usepackage{euscript}
\usepackage[T1]{fontenc}
\usepackage{doc, exscale, fontenc, latexsym, syntonly}
\usepackage{amsfonts}
\usepackage{amsthm}
\usepackage{graphicx}
\usepackage{xcolor}
\usepackage{tikz-cd}

\numberwithin{figure}{section}
\numberwithin{table}{section}

\newcommand{\cat}{\ensuremath{\mathrm{cat}}}
\newcommand{\scat}{\ensuremath{\mathrm{scat}}}
\newcommand{\secat}{\ensuremath{\mathrm{secat}}}
\newcommand{\hsecat}{\ensuremath{\mathrm{hsecat}}}
\newcommand{\id}{\ensuremath{\mathrm{id}}}
\newcommand{\TC}{\ensuremath{\mathrm{TC}}}
\newcommand{\D}{\ensuremath{\mathrm{D}}}
\newcommand{\pr}{\ensuremath{\mathrm{pr}}}
\newcommand{\SD}{\ensuremath{\mathrm{SD}}}
\newcommand{\sd}{\ensuremath{\mathrm{sd}}}

\newtheorem{theorem}{Theorem}[section]
\newtheorem{definition}{Definition}[section]
\newtheorem{corollary}{Corollary}[section]
\newtheorem{remark}{Remark}[section]
\newtheorem{example}{Example}[section]
\newtheorem{proposition}{Proposition}[section]
\newtheorem{lemma}{Lemma}[section]

\title{$m-$Contiguity Distance}

\author{N\.{I}lay Ek\.{I}z Yazici\textsuperscript{1}, Nursultan Kuanyshov, Ayşe Borat\textsuperscript{2}}

\date{\today}

\subjclass[2010]{55M30, 55U10, 55U05}

\keywords{$m$-contiguity distance, homotopic distance, discrete topological complexity, simplicial Lusternik Schnirelmann category}

\address{\textsc{Nilay Ekiz Yazıcı}
Bursa Technical University\\
Faculty of Engineering and Natural Sciences\\
Department of Mathematics\\
Bursa, Turkiye}
\email{nilay.ekiz@btu.edu.tr}

\address{\textsc{Nursultan Kuanyshov}
Suleyman Demirel University (SDU)\\
School of Information Technologies and Applied Mathematics (SITAM)\\
Department of Mathematics\\
Kaskelen, Kazakhistan}
\email{nursultan.kuanyshov@sdu.edu.kz} 

\address{\textsc{Ayşe Borat}
Bursa Technical University\\
Faculty of Engineering and Natural Sciences\\
Department of Mathematics\\
Bursa, Turkiye}
\email{ayse.borat@btu.edu.tr}

\begin{document}

\begin{abstract} In this paper, we systematically develop the $m$-contiguity distance between simplicial maps as a discrete approximation framework for homotopical complexity in the category of simplicial complexes. We construct an increasing sequence of invariants that approximate the contiguity distance from below. We prove that $m$-contiguity distance is invariant under strong homotopy equivalence and that $m$-contiguity distance coincides with the usual contiguity distance provided that the dimension of the domain simplicial complex is $m$. The fundamental properties of $m$-contiguity distance are established, including its behaviour under barycentric subdivision, under compositions, and a categorical product inequality. As applications of this theory, we define the $m$-simplicial Lusternik–Schnirelmann category and the $m$-discrete topological complexity, proving that each arises naturally as a special case of 
$m$-contiguity distance. We also showed that $\SD_1(\varphi,\psi)=\SD(\varphi,\psi)$ under some conditions related to aspherical spaces.
\end{abstract}

\maketitle

\footnotetext[1] {The author was supported by the TÜBİTAK BIDEB 2211-A Domestic Doctoral Scholarship Program.}
\footnotetext[2]{The corresponding author.}

\section{Introduction}

Numerical invariants measuring the homotopical complexity of spaces and maps play a central role in algebraic topology. Classical examples include the Lusternik--Schnirelmann category \cite{LS}, the Schwarz genus of a fibration \cite{Sch}, and Farber's topological complexity \cite{Fa}. These invariants admit interpretations in terms of the minimal number of local homotopical trivializations required to describe global behavior and have been extensively studied in both theoretical and applied contexts.

In this direction, Macias-Virgós and Mosquera-Lois introduced the notion of \emph{homotopic distance} between continuous maps \cite{MVML}. Given two maps with a common domain, the homotopic distance measures the minimal number of subsets covering the domain on which the maps are homotopic. This invariant provides a unifying framework encompassing several classical constructions, including LS-category and topological complexity.

Subsequently, the same authors joint with Oprea introduced the \emph{\(m\)-homotopic distance} \cite{MVMLO}. Unlike homotopic distance, the \(m\)-homotopic distance is not a higher-dimensional generalization but rather a family of invariants that \emph{approximate from below} classical homotopical complexity invariants. More precisely, \(m\)-homotopic distance yields an increasing sequence of numerical invariants converging to homotopic distance as \(m\to\infty\), and similarly provides lower approximations to LS-category, Schwarz genus, and topological complexity. This filtration captures progressively finer homotopical information while remaining more accessible for small values of \(m\). In paper \cite{ADDD}, Arora et al. studied continuous settings for $m$-homotopic distance. 

Parallel to these developments in the continuous setting, discrete models of homotopy theory have attracted increasing attention \cite{ABCD,BPV,FTGCMVV,FTMVMV1,FTMVMV2, FTMVV}. Within the category of simplicial complexes, homotopies are replaced by the notion of \emph{contiguity} between simplicial maps, a classical combinatorial relation compatible with simplicial approximation. Using this framework, Borat, Pamuk and Vergili studied the \emph{contiguity distance} between simplicial maps as a discrete analogue of homotopic distance \cite{BPV}. They established foundational properties of this invariant, including its behaviour under barycentric subdivision, relations with simplicial Lusternik--Schnirelmann category, and product inequalities.

The main objective of this paper is to introduce a discrete approximation framework parallel to that of the \(m\)-homotopic distance. Given simplicial maps between simplicial complexes, we define the \emph{\(m\)-contiguity distance}, which provides a sequence of discrete invariants approximating the contiguity distance from below. In analogy to the continuous setting, the \(m\)-contiguity distance forms an increasing family of invariants whose limit recovers the classical contiguity distance.

We develop the basic theory of the \(m\)-contiguity distance and prove discrete counterparts of the fundamental properties satisfied by the \(m\)-homotopic distance. In particular, we prove that $m$- contiguity distance is invariant under strong homotopy equivalence (Theorem \ref{StrongHI}), analyse its behavior under compositions of simplicial maps, prove inequalities under barycentric subdivision (Theorem \ref{SD-barycentric}), and establish inequalities with respect to categorical products of simplicial complexes (Theorem \ref{categoricalproduct}). Furthermore, we establish a Fox-type phenomenon for the $m$-homotopic distance in the setting of contiguity distance. In particular, we prove that if the dimension of the domain simplicial complex is $m$, then the $m$-contiguity distance coincides with the contiguity distance (Theorem \ref{dimen}).  These results show that \(m\)-contiguity distance fits naturally into the combinatorial homotopy framework.

As applications, we introduce several simplicial invariants arising from this approximation scheme. We define the \emph{\(m\)-simplicial Lusternik-Schnirelmann category} (Theorem \ref{scat}) and the \emph{\(m\)-discrete topological complexity} (Theorem \ref{TC^{m}}), both of which approximate their classical simplicial counterparts from below. Furthermore, using Moore path complexes and simplicial fibrations, we define the \emph{\(m\)-simplicial Schwarz genus} and the \emph{\(m\)-homotopy simplicial Schwarz genus} of a simplicial map (Theorem \ref{hsecat=secat}, Theorem \ref{hsecat}).

\noindent\textbf{Organization of the paper.}
The paper is organised as follows. In Section~2, we recall the necessary background material, including the definitions of contiguity distance, simplicial Lusternik--Schnirelmann category, Moore paths, and the Moore path complex. In Section~3, we introduce the notion of $m$-contiguity distance and establish its fundamental properties. We prove that the $m$-contiguity distance is invariant under strong homotopy equivalence and establish a Fox-type phenomenon for the $m$-contiguity distance. Furthermore, we prove that the $m$-simplicial Lusternik--Schnirelmann category arises as a special case of $m$-contiguity distance. Section~4 is devoted to structural properties of $m$-contiguity distance, where we establish a categorical product formula. In Section~5, we define the $m$-simplicial Švarc genus and the $m$-homotopical simplicial Švarc genus of a simplicial map. We prove that these two invariants coincide whenever the map is a simplicial fibration. In Section~6, we introduce the notion of $m$-simplicial topological complexity and show that it is a special case of $m$-contiguity distance. We also provide a Švarc-type characterisation of $m$-discrete topological complexity. Finally, in Section~7, we extend the argument to aspherical spaces and obtain analogous results in the simplicial setting.    

\section{Preliminary}
 
In this section, we recall the contiguity class of simplicial maps, the simplical LS category, the contiguity distance, Moore Paths, and the Path complex. 

Before proceeding, we should note that throughout this paper, all simplicial complexes are assumed to be edge-path connected.
\subsection{Contiguity class}
\begin{definition}
Two simplicial maps $\varphi, \psi : K \to L$ are called contiguous if for  every simplex $\{v_0, \dots, v_k\}$ in $K$, $\{\varphi(v_0), \dots , \varphi(v_k),\psi(v_0), \dots , \psi(v_k)\}$ constitutes a simplex in $L$. Such maps are denoted by $\varphi \sim_c \psi$.
\end{definition}

\begin{definition}
Two simplicial maps $\varphi, \psi : K \rightarrow L $ are said to be in the same contiguity class if one can find a finite sequence of simplicial maps $\varphi_i : K \rightarrow L$ for $i=0,1,\dots m$ such that $\varphi = \varphi_1 \sim_c \varphi_2 \sim_c \dots \sim_c \varphi_m= \psi$. Such maps are denoted by $\varphi \sim \psi $.
\end{definition}

\begin{definition}\cite{BPV} For simplicial maps $\varphi, \psi:K\rightarrow K'$, the contiguity distance between $\varphi$ and $\psi$, denoted by $\SD(\varphi,\psi)$, is the least integer $k\geq 0$ such that there exists a covering of $K$ by subcomplexes $K_0,K_1,...,K_k$ with the property that $\varphi|_{K_j}$ and $\psi|_{K_j}$ are in the same contiguity class for all $j=0,1,...,k.$ If there is no such covering, it is defined to be $\SD(\varphi,\psi)=\infty$.
\end{definition}

\begin{definition} \cite{FTMVV}
Let $K$ be a simplicial complex and $\Omega\subset K$ be a subcomplex. If the inclusion $i:\Omega\hookrightarrow K$ and a constant simplicial map $c_{v_0}: \Omega\rightarrow K$, where $v_0 \in K$ is some fixed vertex, are in the same contiguity class, then $\Omega$ is called categorical.
\end{definition}

\begin{definition} \cite{FTMVV} Let $K$ be a simplicial complex. The simplicial Lusternik-Schnirelmann category $\scat(K)$ is the least integer $k\geq 0$ such that one can find categorical subcomplexes $K_0,K_1,\dots, K_k$ of $K$ covering $K$.
\end{definition}

\subsection{Moore Paths} 

Let $\textbf{Z}$ be the one--dimensional simplicial complex whose vertex set consists of all integers
$i \in \mathbb{Z}$ and whose $1$--simplices are consecutive the pairs $\{i,i+1\}$. In this way, $\textbf{Z}$
defines a triangulation of the real line.

\begin{definition} \cite{Gr}
Let $K$ be a simplicial complex. A \emph{Moore path} in $K$ is a simplicial map $ \gamma : \textbf{Z} \to K $
which is eventually constant at both ends. That is, there exist integers
$i^{-}, i^{+} \in \mathbb{Z}$ such that
\begin{enumerate}
  \item $\gamma(i) = \gamma(i^{-})$ for all $i \leq i^{-}$,
  \item $\gamma(i) = \gamma(i^{+})$ for all $i \geq i^{+}$.
\end{enumerate}
\end{definition}

If $i^{-} = i^{+}$, then $\gamma$ is a constant map. For a non--constant Moore path
$\gamma \colon \textbf{Z} \to K$, we define
\[
\gamma^{-} := \max \{\, i^{-} \mid \gamma(i) = \gamma(i^{-}) \text{ for all } i \leq i^{-} \,\},
\]
\[
\gamma^{+} := \min \{\, i^{+} \mid \gamma(i) = \gamma(i^{+}) \text{ for all } i \geq i^{+} \,\}.
\]
Clearly, $\gamma^{-} < \gamma^{+}$.

\begin{definition}\cite{FTGCMVV} \label{initialfinal}
The images $\alpha(\gamma):=\gamma(\gamma^{-})$ and $\omega(\gamma):=\gamma(\gamma^{+})$
are called the \emph{initial vertex} and the \emph{final vertex} of $\gamma$, respectively.
If $\gamma$ is constant, we set $\gamma^{-} = 0 = \gamma^{+}$.
\end{definition}

For integers $a,b \in \mathbb{Z}$ with $a \leq b$, let $[a,b]$ denote the full subcomplex of
$\textbf{Z}$ generated by all vertices $i$ such that $a \leq i \leq b$. With this notation, any Moore
path $\gamma$ in $K$ can be identified with the restricted simplicial map
$\gamma \colon [\gamma^{-},\gamma^{+}] \to K.$
The interval $[\gamma^{-}, \gamma^{+}]$ is called the \emph{support} of $\gamma$.

If $\gamma$ is a Moore path in $K$ with support $[\gamma^{-}, \gamma^{+}]$, its \emph{reverse
Moore path} $\overline{\gamma}$ is defined by
\[
\overline{\gamma}(i) := \gamma(-i),
\]
whose support is the interval $[-\gamma^{+}, -\gamma^{-}]$. This reparametrization represents
$\gamma$ traversed in the opposite direction.

If $\gamma$ is a Moore path in $K$ with support $[\gamma^{-}, \gamma^{+}]$ such that
$\gamma^{+} - \gamma^{-} = m$, we define the \emph{normalized Moore path} $|\gamma| \colon I_m \to K$
by
$|\gamma|(i) := \gamma(i + \gamma^{-}).
$
The advantage of this normalization is that the support of $|\gamma|$ is the interval $[0,m]$,
which is more convenient when working with simplicial fibrations.

\begin{definition}\cite{FTGCMVV} 
Let $\gamma$ and $\delta$ be Moore paths in $K$ such that
$\omega(\gamma) = \alpha(\delta).$
The \emph{product path} $\gamma * \delta$ is defined by
$$
(\gamma * \delta)(i) :=
\begin{cases}
\gamma(i - \delta^{-}), & \text{if } i \leq \gamma^{+} + \delta^{-}, \\[4pt]
\delta(i - \gamma^{+}), & \text{if } i \geq \gamma^{+} + \delta^{-}.
\end{cases}
$$
\end{definition}

It follows directly from the definition that the support of $\gamma * \delta$ is $[\gamma^{-}+\delta^{-},\gamma^{+}+\delta^{+}].$
The product of
Moore paths is strictly associative. More precisely, if
$\gamma, \delta$ and $\varepsilon$ are Moore paths satisfying $\omega(\gamma) = \alpha(\delta)
\quad \text{and} \quad
\omega(\delta) = \alpha(\varepsilon),$
then the equality
$\gamma * (\delta * \varepsilon) = (\gamma * \delta) * \varepsilon
$
holds.

Furthermore, if $c_v$ denotes the constant Moore path at a vertex $v \in K$, then one verifies that
$\gamma * c_{\omega} = \gamma = c_{v} * \gamma$ where 
$v = \alpha(\gamma)$ and $\omega=\omega(\gamma).$ 

\subsection{ The path complex } 

We will consider a notion of the Moore path complex associated with a simplicial complex $K$. 

Let $K$ and $L$ be simplicial complexes. We define $L^{K}$ as the simplicial complex whose vertices are all simplicial maps $f \colon K \to L$ and we consider as simplices the finite sets
$\{f_0,\dots,f_p\}$ of simplicial maps $K\to L$ such that $
\bigcup_{i=0}^{p} f_i(\sigma)$
is a simplex of $L$ for any simplex $\sigma \in K$. This construction endows $L^K$ with the structure of a simplicial complex.



\begin{definition}\cite{FTGCMVV} 
Let $K$ be a simplicial complex. The \emph{Moore path complex} of $K$, denoted by $P K$,
is defined as the full subcomplex of $K^\mathbf{Z}$ generated by all Moore paths
$\gamma \colon \mathbf{Z} \to K$.
\end{definition}

A finite set $\{\gamma_0,\dots,\gamma_p\} \subset PK$ determines a simplex of $PK$ if and
only if, for every integer $i \in \mathbb{Z}$, the set
\[
\{\gamma_0(i),\dots,\gamma_p(i),\gamma_0(i+1),\dots,\gamma_p(i+1)\}
\]
forms a simplex of $K$.

An important feature of $PK$ is that, for any bounded interval $[a,b] \subset \mathbf{Z}$,
the simplicial complex $K^{[a,b]}$ naturally appears as a full subcomplex of $PK$.
Furthermore, given a simplicial map $f \colon K \to L$, the composition $f \circ \gamma$
is a Moore path in $L$ for every Moore path $\gamma$ in $K$. 

\begin{theorem} \cite{FTGCMVV}
Let $K$ be an arbitrary simplicial complex. Then the simplicial map
\[
\pi = (\alpha,\omega) \colon PK \longrightarrow K \times K
\]
is a simplicial finite fibration, where the maps $\alpha$ and $\omega$ are those defined in
Definition~\ref{initialfinal}.
\end{theorem}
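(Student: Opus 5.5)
The plan is to verify directly the lifting property in the definition of a simplicial finite fibration from \cite{FTGCMVV}. Given a finite simplicial complex $L$, a simplicial map $f\colon L\to PK$ and a discrete homotopy $H=(H_1,H_2)\colon L\times I_m\to K\times K$ with $H(-,0)=\pi\circ f$, we must produce a simplicial map $G\colon L\times I_m\to PK$ with $G(-,0)=f$ and $\pi\circ G=H$. Contiguity classes are generated by single contiguities, and a homotopy $L\times I_m\to K\times K$ is a chain of $m$ contiguous maps. So it suffices to treat one step: $H_t,H_{t+1}\colon L\to K\times K$ contiguous with $H_t=\pi\circ f_t$, where we must find $f_{t+1}$ contiguous to $f_t$ in $PK^L$ with $\pi\circ f_{t+1}=H_{t+1}$. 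We then iterate $m$ times.

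\textbf{The one-step lift.} For a vertex $x\in L$ we let the path $f_t(x)$ have support $[a_x,b_x]$. We define $f_{t+1}(x)$ by prepending one edge from $H_{1,t+1}(x)$ to $\alpha(f_t(x))=H_{1,t}(x)$ and appending one edge from $\omega(f_t(x))=H_{2,t}(x)$ to $H_{2,t+1}(x)$. In Moore-path language this is $f_{t+1}(x)=e_1(x)*f_t(x)*e_2(x)$, where $e_1(x),e_2(x)$ are the corresponding edge paths. These are edges, possibly degenerate, of $K$ because $H_t$ and $H_{t+1}$ are contiguous. We then check the following:
\begin{enumerate}
\item[(i)] $f_{t+1}$ sends vertices of $L$ to Moore paths with $\alpha=H_{1,t+1}$ and $\omega=H_{2,t+1}$. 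Here one must check that the definitions of $\gamma^\pm$ still give the correct endpoints, even when stationary segments occur, since $\alpha$ is read off at the eventual-constancy index.
\item[(ii)] $f_{t+1}$ is simplicial into $PK$.
\item[(iii)] $f_t$ and $f_{t+1}$ are contiguous as maps $L\to PK$.
\end{enumerate}
Items (ii) and (iii) reduce, by the description of simplices of $PK$, to checking that for each simplex $\sigma\in L$ and each $i\in\mathbb Z$ the set $\{f_s(x)(i),f_s(x)(i+1): x\in\sigma,\ s\in\{t,t+1\}\}$ is a simplex of $K$.

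\textbf{Main obstacle: index alignment.} The product $*$ shifts indices by amounts depending on the supports $\gamma^-,\gamma^+$, and these vary with $x$. Different vertices of a simplex $\sigma$ would therefore have their paths shifted differently, which breaks the simplex condition in $PK$. To avoid this, I will not use $*$ literally. Instead I define the new path by a uniform shift: $f_{t+1}(x)(i)=f_t(x)(i)$ for all $i$ in a common window, with the new edges attached at the fixed positions just outside a common interval $[A,B]$. Such an interval exists containing all supports of the $f_t(x)$, because $L$ is finite, and this is precisely where finiteness of $L$ is used. We first replace $f_t(x)$ by the path that is constant outside $[A,B]$, which is harmless since the path is already constant there. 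Then we set $f_{t+1}(x)(i)=H_{1,t+1}(x)$ for $i\le A-1$ and $f_{t+1}(x)(i)=H_{2,t+1}(x)$ for $i\ge B+1$.

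\textbf{Checking the simplex condition.} With this alignment the condition is verified window by window:
\begin{itemize}
\item Inside $[A,B]$ nothing changes, so it follows from $f_t$ being simplicial.
\item At the seams $i=A-1,A$ and $i=B,B+1$, the relevant vertex set lies in $H_{1,t}(\sigma)\cup H_{1,t+1}(\sigma)$, respectively its $H_2$ analogue. This is a simplex by contiguity of $H_t$ and $H_{t+1}$.
\end{itemize}
The endpoint bookkeeping in (i) is the remaining delicate point. If $H_{1,t+1}(x)=H_{1,t}(x)$, the support simply does not grow, and $\alpha$ is unchanged, as required. Iterating this step $m$ times yields $G$.
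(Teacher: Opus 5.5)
Your proof is correct. The paper gives no argument of its own for this statement and only cites \cite{FTGCMVV}, so your proposal amounts to a self-contained verification rather than an alternative to an internal proof. The construction works as stated:
\begin{itemize}
\item Reducing to a single contiguity step is valid, because a map $L\times I_m\to K\times K$ out of the categorical product is the same thing as a chain of $m$ contiguities.
\item Attaching the new edges at positions $A-1$ and $B+1$ of a window $[A,B]$ that contains the supports of all $f_t(x)$ handles the index-alignment problem. Such a window exists because $L$ has finitely many vertices, and this is exactly why the result is stated only for \emph{finite} fibrations.
\end{itemize}

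Three small points would tighten the write-up.
\begin{itemize}
\item The endpoint bookkeeping in (i) needs no case analysis. For every Moore path, constant ones included, $\alpha(\gamma)$ and $\omega(\gamma)$ are just the eventual values of $\gamma$ at $-\infty$ and $+\infty$. Hence $\alpha(f_{t+1}(x))=H_{1,t+1}(x)$ and $\omega(f_{t+1}(x))=H_{2,t+1}(x)$ follow directly from the definition.
\item Your simplex check should also list the outer windows $\{i,i+1\}$ with $i\le A-2$ or $i\ge B+1$. For the contiguity (iii) these are not trivial, since they mix $f_t$ and $f_{t+1}$. Like the seams, they produce the set $\{H_{1,t}(x),H_{1,t+1}(x): x\in\sigma\}$, or its second-coordinate analogue. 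That set is a simplex because $H_t\sim_c H_{t+1}$ and the projections of the categorical product $K\times K\to K$ are simplicial.
\item When iterating, the new supports lie in $[A-1,B+1]$, so a common window persists at every stage.
\end{itemize}
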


For more details on Moore paths and Moore path complex, we refer the redear to \cite{FTGCMVV}.\\

\subsection{ $m$-homotopic distance} 
\begin{definition}\cite{MVMLO}
Let $f,g \colon X \to Y$ be two maps and fix an integer $m \geq 0$. We define $D_m(f,g)=k$
to be the smallest integer $k \geq 0$ for which there exists an open cover
$\{U_0,\dots,U_k\}$ of $X$ satisfying the following condition: for every index $j$ and
any map $h \colon P \to U_j$ from an $m$--dimensional cell complex $P$, the compositions
\[
f \circ h \simeq g \circ h
\]
are homotopic.
\end{definition}

\section{$m$-Contiguity Distance}

\begin{definition}
	Given two simplicial maps $\varphi,\psi:K\rightarrow K'$, the m-contiguity distance denoted by $\SD_m(\varphi,\psi)$, is the least integer $k\geq 0$ such that there exists a covering of $K$ by subcomplexes $K_0,\dots,K_k$ with the property that, for each $K_j$, any simplicial map $\eta:P\rightarrow K_j$ from an $m$-dimesional simplicial complex P, $\varphi\circ\eta$ and $\psi\circ\eta$ are in the same contiguity class.
\end{definition}

\begin{proposition} \label{prop1}
	For simplicial maps $\varphi,\psi:K\rightarrow K'$, we have the following 
	\begin{itemize}
	\item[(1)]  $\SD_m(\varphi,\psi)\leq\SD(\varphi,\psi).$
    \item[(2)]  If $n\leq m$, then $\SD_n(\varphi,\psi)\leq \SD_m(\varphi,\psi).$
    \end{itemize}
\end{proposition}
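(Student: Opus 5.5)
For both parts the plan is to take an optimal covering for the larger invariant and check that it also works for the smaller one. If the larger invariant is infinite, the inequality holds trivially.

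For (1), I assume $\SD(\varphi,\psi)=k<\infty$ and take subcomplexes $K_0,\dots,K_k$ covering $K$ with $\varphi|_{K_j}\sim\psi|_{K_j}$ for every $j$. Fix $j$, an $m$-dimensional complex $P$, and a simplicial map $\eta:P\to K_j$. Then there is a chain of contiguities
\[
\varphi|_{K_j}=\varphi_1\sim_c\varphi_2\sim_c\cdots\sim_c\varphi_r=\psi|_{K_j}
\]
of maps $K_j\to K'$. The key observation is that contiguity is preserved under precomposition. If $\sigma=\{v_0,\dots,v_p\}$ is a simplex of $P$, then $\eta(\sigma)$ is a simplex of $K_j$. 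Since $\varphi_i\sim_c\varphi_{i+1}$, the set
\[
\varphi_i(\eta(\sigma))\cup\varphi_{i+1}(\eta(\sigma))
\]
is a simplex of $K'$, which means $\varphi_i\circ\eta\sim_c\varphi_{i+1}\circ\eta$. Because $\eta$ lands in $K_j$, we have $\varphi\circ\eta=\varphi|_{K_j}\circ\eta$, and likewise for $\psi$. Composing the whole chain with $\eta$ therefore puts $\varphi\circ\eta$ and $\psi\circ\eta$ in the same contiguity class. So the same covering witnesses $\SD_m(\varphi,\psi)\le k$.

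For (2), I assume $\SD_m(\varphi,\psi)=k$ and take a witnessing covering $K_0,\dots,K_k$. Let $P$ be an $n$-dimensional complex with $n\le m$ and $\eta:P\to K_j$ simplicial. I need $P$ to be admissible in the definition for $m$, and there are two ways to arrange this.
\begin{itemize}
\item[(a)] Read ``$m$-dimensional'' as ``of dimension at most $m$.'' Then $P$ is itself admissible, and the conclusion is immediate.
\item[(b)] Insist on dimension exactly $m$. Then form $P'=P\sqcup\Delta^m$, where $\Delta^m$ is a disjoint $m$-simplex. Extend $\eta$ to $\eta':P'\to K_j$ by sending $\Delta^m$ to a single vertex of $K_j$; this is simplicial. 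By hypothesis $\varphi\circ\eta'\sim\psi\circ\eta'$. Restricting along the inclusion $P\hookrightarrow P'$, which is precomposition and so preserves contiguity by the observation in (1), gives $\varphi\circ\eta\sim\psi\circ\eta$.
\end{itemize}
Hence the covering also witnesses $\SD_n(\varphi,\psi)\le k$.

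The only real subtlety is this dimension convention in (2). Disjoint unions are permitted, since connectivity is assumed only for $K$, not for $P$. If a connected $P'$ were required instead, I would glue $\Delta^m$ to $P$ at a single vertex, map $\Delta^m$ to the image of that vertex, and the same restriction argument goes through.
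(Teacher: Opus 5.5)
Your proof is correct. The paper states this proposition without proof, treating both parts as immediate from the definitions.

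Part (1) is exactly that definitional argument. Precomposing a contiguity chain $\varphi|_{K_j}=\varphi_1\sim_c\cdots\sim_c\varphi_r=\psi|_{K_j}$ with $\eta$ gives a chain from $\varphi\circ\eta$ to $\psi\circ\eta$.

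Part (2) is where you go beyond the paper. The definition of $\SD_m$ quantifies over $m$-dimensional $P$. Under the reading \emph{exactly} $m$, the inequality is not a tautology, and the paper does not address this. Your padding trick settles it: extend $\eta$ by a $\Delta^m$ collapsed to a vertex, then restrict back along the inclusion, which is again a precomposition.

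One correction to your closing remark: connectivity is not assumed only for $K$. The paper's standing convention is that \emph{all} simplicial complexes are edge-path connected, which includes $P$. Its examples also treat a $0$-dimensional $P$ as a single vertex, which is why they count exactly six maps $P\to K$. So the wedge $P\vee\Delta^m$, not the disjoint union $P\sqcup\Delta^m$, is the version that fits the paper's setting. You already give that variant, so nothing is missing.
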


The following example gives a strict case for Proposition~\ref{prop1} (1).

 \begin{example}
    Consider the abstract simplicial complex $K$ as given in Figure~\ref{f1}. Let us take the identity map $\id: K \to K$ and the constant map $c_0: K \to K$, which sends every simplex of $K$ to the vertex $0$. We know that $\SD(\id,c_0)=\scat(K)=1$ in \cite{BPV}. To compute $0-$contiguity distance, let us choose $P$ as a 0-dimensional complex, $P = \{a\}$. In this case, there are three possible maps $\eta_i: P \to K$, and for each $i$, 
$\varphi \circ \eta_i$ and $\psi \circ \eta_i$ are contiguous, hence they belong to the same contiguity class. 
Therefore, we obtain $SD_0(\mathrm{id}, c_0) = 0$, which shows that $SD_0(\mathrm{id}, c_0) < SD(\mathrm{id}, c_0)$.

    \begin{figure}
\begin{center}
\begin{tikzpicture}[scale=0.5]
    \coordinate[label=below:$v_0$] (0) at (0,0);
    \coordinate[label=above:$v_1$] (1) at (2,3.4);
    \coordinate[label=below:$v_2$] (2) at (4,0);

    \draw (0) -- (1);
    \draw (0) -- (2);
    \draw (2) -- (1);
    
    \foreach \i in {0,1,2} {
        \filldraw[black] (\i) circle (3pt);
    }
\end{tikzpicture}
\end{center}
\caption{} \label{f1}
\end{figure}
\end{example}

\begin{example}
    Let $ K = \partial \Delta^3 $ be the simplicial complex consisting of all proper faces of the $3$-simplex $ \Delta^3 $, with vertex set $\{w_0,w_1,w_2,w_3\}$. Let $P$ be the simplicial complex given by Figure~\ref{f2}, and define a simplicial map $\eta:P\to K$ by $\eta(v_0)=w_0,  \eta(v_1)=w_1 $ and $\eta(v_2)=w_2.$ Let $id:K\to K $ denote the identity map. 
    
    First consider the case where $ \varphi : K \to K $ is the constant simplicial map defined by $ \varphi(w) = w_3 $ for all vertices $ w \in K $.  Then it is obvious that $\id\circ\eta \nsim_c \varphi\circ\eta.$
    
     If $\varphi$ is not constant and its image contains more than one vertex, then $\id\circ\eta$ is again not contiguous to $\varphi\circ\eta$. 
     
     Therefore, $\id\circ\eta$ and $\varphi\circ\eta$ do not belong to the same contiguity class for any simplicial map $\varphi:K\to K$.

    In particular, for the constant map $c_3:K\to K$ defined by $ c_3(w) = w_3 $ for all vertices $ w \in K. $ We obtain $\SD_2(\id,c_3)\neq 0.$

   On the other hand, as shown in \cite{M}, $\scat(K)=1.$ Then, by the Proposition~\ref{prop1}, it follows that $\SD_2(\id,c_3)=1.$
    \begin{figure}

\begin{tikzpicture}[scale=0.5]
    \coordinate[label=below:$v_0$] (0) at (0,0);
    \coordinate[label=above:$v_1$] (1) at (2,3.4);
    \coordinate[label=below:$v_2$] (2) at (4,0);

    \fill[gray!30] (0) -- (1) -- (2) -- cycle;
    
    \draw (0) -- (1);
    \draw (0) -- (2);
    \draw (2) -- (1);
    
    \foreach \i in {0,1,2} {
        \filldraw[black] (\i) circle (3pt);
    }
\end{tikzpicture}

\caption{} \label{f2}
\end{figure} 
\end{example}

\begin{proposition} \label{pro32}
    If $\varphi\sim\varphi', \psi\sim\psi':K\to K',$ then $\SD_m(\varphi,\psi)=\SD_m(\varphi',\psi').$
\end{proposition}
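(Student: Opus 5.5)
The plan is to show that any cover of $K$ that works for the pair $(\varphi,\psi)$ also works for $(\varphi',\psi')$. This gives $\SD_m(\varphi',\psi')\leq \SD_m(\varphi,\psi)$. Since being in the same contiguity class is an equivalence relation (it is generated by chains, hence symmetric and transitive), the hypotheses are symmetric, and the reverse inequality follows by exchanging the roles of the two pairs.

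\textbf{Key lemma.} If $\varphi\sim\varphi'\colon K\to K'$ and $\eta\colon P\to K_j\subset K$ is any simplicial map, then $\varphi\circ\eta\sim\varphi'\circ\eta$. It suffices to check a single contiguity step $\varphi_i\sim_c\varphi_{i+1}$ in the chain joining $\varphi$ to $\varphi'$.

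Let $\sigma=\{v_0,\dots,v_k\}$ be a simplex of $P$. Then $\eta(\sigma)=\{\eta(v_0),\dots,\eta(v_k)\}$ is a simplex of $K_j$, hence of $K$, possibly with repeated vertices. By the definition of contiguity, the set
\[
\{\varphi_i(\eta(v_0)),\dots,\varphi_i(\eta(v_k)),\varphi_{i+1}(\eta(v_0)),\dots,\varphi_{i+1}(\eta(v_k))\}
\]
is a simplex of $K'$. Therefore $\varphi_i\circ\eta\sim_c\varphi_{i+1}\circ\eta$. Chaining these steps gives a chain from $\varphi\circ\eta$ to $\varphi'\circ\eta$, and the same argument gives $\psi\circ\eta\sim\psi'\circ\eta$.

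\textbf{Main step.} Suppose $\SD_m(\varphi,\psi)=k$, realized by a cover $K_0,\dots,K_k$. Fix $j$ and an $m$-dimensional complex $P$ with a map $\eta\colon P\to K_j$. By hypothesis $\varphi\circ\eta\sim\psi\circ\eta$. Then, by transitivity and symmetry,
\[
\varphi'\circ\eta\sim\varphi\circ\eta\sim\psi\circ\eta\sim\psi'\circ\eta .
\]
So the same cover witnesses $\SD_m(\varphi',\psi')\leq k$. If $\SD_m(\varphi,\psi)=\infty$, the inequality is trivial.

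\textbf{Where care is needed.} There is no real obstacle. The only points to check are:
\begin{itemize}
\item composition with $\eta$ preserves contiguity even when $\eta$ is non-injective on a simplex; this holds because simplices of $K'$ are sets, so repeated vertices are harmless;
\item the equivalence-relation properties of $\sim$, which the definition gives directly by concatenating and reversing chains.
\end{itemize}
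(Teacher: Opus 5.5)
The paper states Proposition~\ref{pro32} without proof, so there is no argument of its own to compare with. Your proof is correct and complete. It is the natural argument in the same cover-transport style the paper uses for Propositions~\ref{yprop3.3} and~\ref{yprop3.4}: keep the cover, note that precomposing each contiguity step with $\eta\colon P\to K_j\subset K$ is again a contiguity step because $\eta(\sigma)$ is a simplex of $K$, and get the reverse inequality by symmetry of $\sim$.
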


\begin{proposition}\label{yprop3.3}
    Let $\varphi,\psi:K\to K'$ and $\mu:M\to K$ be simplicial maps. Then we have $$\SD_m(\varphi\circ\mu,\psi\circ\mu)\leq\SD_m(\varphi,\psi).$$
\end{proposition}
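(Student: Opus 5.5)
The plan is to pull back an optimal covering of $K$ along $\mu$ and check that the defining property of $\SD_m$ survives the pullback. Suppose $\SD_m(\varphi,\psi)=k$; if it is infinite there is nothing to prove. Choose subcomplexes $K_0,\dots,K_k$ covering $K$ such that, for each $j$ and every simplicial map $\eta:P\to K_j$ from an $m$-dimensional simplicial complex $P$, the maps $\varphi\circ\eta$ and $\psi\circ\eta$ lie in the same contiguity class.

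For each $j$, set $M_j:=\mu^{-1}(K_j)$. This is the subcomplex of $M$ consisting of all simplices $\sigma$ with $\mu(\sigma)\in K_j$. It is closed under faces, because if $\mu(\sigma)\in K_j$ then for any face $\tau\subseteq\sigma$ the image $\mu(\tau)$ is a face of $\mu(\sigma)$, hence lies in $K_j$. The $M_j$ cover $M$: for any simplex $\sigma$ of $M$, $\mu(\sigma)$ is a simplex of $K$, so it lies in some $K_j$, and then $\sigma\in M_j$. Note that this argument uses the covering by simplices, not only by vertices, which the definition of a covering by subcomplexes provides.

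Next, verify the property on each $M_j$. Let $\eta:P\to M_j$ be a simplicial map with $\dim P=m$. Then $\mu|_{M_j}\circ\eta:P\to K_j$ is a simplicial map from an $m$-dimensional complex into $K_j$. By the choice of the $K_j$, the maps $\varphi\circ(\mu\circ\eta)$ and $\psi\circ(\mu\circ\eta)$ are in the same contiguity class. By associativity of composition, these are exactly $(\varphi\circ\mu)\circ\eta$ and $(\psi\circ\mu)\circ\eta$. Thus $M_0,\dots,M_k$ is an admissible covering for $\SD_m(\varphi\circ\mu,\psi\circ\mu)$, which gives the inequality.

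There is no real obstacle. The only points needing care are that preimages of subcomplexes are subcomplexes and that they cover $M$, both of which are checked above. Some $M_j$ may be empty, which is harmless: either allow empty pieces, or discard them, which only decreases the count.
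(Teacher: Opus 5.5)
Your proposal is correct and is the same argument as the paper's proof: you pull back the cover via $M_j=\mu^{-1}(K_j)$ and apply the defining property of the $K_j$ to $\mu\circ\eta$. Your explicit check that the $M_j$ are subcomplexes covering $M$ fills in a detail the paper leaves implicit.
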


\begin{proof}
    Let $\SD_m(\varphi,\psi)=k.$ Then there exist subcomplexes $K_0,\dots,K_k$ of K such that each $K_j$ has the property that any map $\eta:P\to K_j$ from an $m$-dimensional simplicial complex P, $\varphi\circ\eta\sim \psi\circ\eta$. Define $M_j:=\mu^{-1}(K_j)$ and take any simplicial map $\eta':P\to M_j.$ Since $\mu\circ\eta'$ is a map from $P$ to $K_j,$ we have $\varphi\circ\mu\circ\eta'\sim\psi\circ\mu\circ\eta'.$ It follows that $\SD_m(\varphi\circ\mu,\psi\circ\mu)\leq k.$
\end{proof}

\begin{corollary}
    Let $\varphi,\psi:K\to K' $ be simplicial maps and $\beta:M\to K$ be a simplicial map which has right strong homotopy inverse (that is, $\beta$ satisfies $\beta\circ\alpha\sim\id_K$ where $\alpha:K\to M$). Then $\SD_m(\varphi\circ\beta,\psi\circ\beta)=\SD_m(\varphi,\psi).$
\end{corollary}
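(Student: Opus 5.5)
One inequality comes directly from Proposition~\ref{yprop3.3}. Taking $\mu=\beta$ there gives
\[
\SD_m(\varphi\circ\beta,\psi\circ\beta)\leq\SD_m(\varphi,\psi).
\]
So the work is entirely in the reverse inequality. For that I would apply Proposition~\ref{yprop3.3} a second time, now to the pair $\varphi\circ\beta,\psi\circ\beta:M\to K'$ precomposed with $\alpha:K\to M$. This yields
\[
\SD_m(\varphi\circ\beta\circ\alpha,\ \psi\circ\beta\circ\alpha)\leq \SD_m(\varphi\circ\beta,\psi\circ\beta).
\]

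The next step is to identify the left-hand side with $\SD_m(\varphi,\psi)$, using Proposition~\ref{pro32}. The hypothesis gives $\beta\circ\alpha\sim\id_K$, and I must show this implies $\varphi\circ\beta\circ\alpha\sim\varphi$ and $\psi\circ\beta\circ\alpha\sim\psi$. That is, contiguity classes are preserved under postcomposition.

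It suffices to check a single contiguity step. Suppose $f\sim_c g:K\to K$, and let $\sigma=\{v_0,\dots,v_k\}$ be a simplex of $K$. Then $f(\sigma)\cup g(\sigma)$ is a simplex of $K$. Since $\varphi$ is simplicial, $\varphi\bigl(f(\sigma)\cup g(\sigma)\bigr)=\varphi f(\sigma)\cup\varphi g(\sigma)$ is a simplex of $K'$, so $\varphi\circ f\sim_c\varphi\circ g$. Applying this link by link to a chain $\beta\circ\alpha=f_1\sim_c\dots\sim_c f_r=\id_K$ gives a chain from $\varphi\circ\beta\circ\alpha$ to $\varphi$, and likewise for $\psi$.

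With these two contiguity-class relations, Proposition~\ref{pro32} gives
\[
\SD_m(\varphi\circ\beta\circ\alpha,\psi\circ\beta\circ\alpha)=\SD_m(\varphi,\psi).
\]
Combining this with the second application of Proposition~\ref{yprop3.3} gives $\SD_m(\varphi,\psi)\leq\SD_m(\varphi\circ\beta,\psi\circ\beta)$, which together with the first inequality proves the equality.

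I expect no real obstacle. The only point needing care is the verification that postcomposition by a simplicial map preserves contiguity classes, which is the one-step argument above.
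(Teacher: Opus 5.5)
Your proof is correct and is the paper's argument: the chain $\SD_m(\varphi,\psi)=\SD_m(\varphi\circ\beta\circ\alpha,\psi\circ\beta\circ\alpha)\leq\SD_m(\varphi\circ\beta,\psi\circ\beta)\leq\SD_m(\varphi,\psi)$, using Propositions~\ref{pro32} and~\ref{yprop3.3}. Your one-step verification that postcomposition with a simplicial map preserves contiguity classes fills in a detail the paper only asserts.
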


\begin{proof}
    As $\beta\circ\alpha\sim\id_K$, we have $\varphi\circ\beta\circ\alpha\sim\varphi$ and $\psi\circ\beta\circ\alpha\sim\psi.$ Hence, $$\SD_m(\varphi,\psi)=\SD_m(\varphi\circ\beta\circ\alpha,\psi\circ\beta\circ\alpha)\leq \SD_m(\varphi\circ\beta,\psi\circ\beta)\leq\SD_m(\varphi,\psi)$$
    where the equality and inequalities follow from Proposition \ref{yprop3.3} and \ref{pro32}.
\end{proof}

\begin{proposition}\label{yprop3.4}
    Let $\varphi,\psi: K\to K'$ and $\mu:K'\to M$ be simplicial maps. Then we have $$\SD_m(\mu\circ\varphi,\mu\circ\psi)\leq \SD_m(\varphi,\psi).$$
\end{proposition}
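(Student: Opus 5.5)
The plan is to reuse the covering that realises $\SD_m(\varphi,\psi)$ unchanged, since post-composition with $\mu$ does not alter the domain $K$. I may assume $\SD_m(\varphi,\psi)=k<\infty$; otherwise the inequality is trivial. Then there are subcomplexes $K_0,\dots,K_k$ covering $K$. For every $j$ and every simplicial map $\eta:P\to K_j$ from an $m$-dimensional simplicial complex $P$, the maps $\varphi\circ\eta$ and $\psi\circ\eta$ lie in the same contiguity class. I will show that this same covering witnesses $\SD_m(\mu\circ\varphi,\mu\circ\psi)\leq k$.

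The key step is a small lemma: post-composition preserves contiguity classes. Suppose $f,g:P\to K'$ satisfy $f\sim g$, and let $\mu:K'\to M$ be simplicial. Then $\mu\circ f\sim\mu\circ g$. It suffices to check a single contiguity $f\sim_c g$ and then apply it along the finite chain $f=f_1\sim_c\cdots\sim_c f_r=g$.

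To check the single step, take any simplex $\{v_0,\dots,v_p\}$ of $P$. Contiguity of $f$ and $g$ says that the set $\{f(v_0),\dots,f(v_p),g(v_0),\dots,g(v_p)\}$ is a simplex $\tau$ of $K'$. Since $\mu$ is simplicial, $\mu(\tau)$ is a simplex of $M$. But $\mu(\tau)$ is exactly the set $\{\mu f(v_i),\mu g(v_i)\}$, so $\mu\circ f\sim_c\mu\circ g$.

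To finish, fix $j$ and $\eta:P\to K_j$. By hypothesis $\varphi\circ\eta\sim\psi\circ\eta$. The lemma then gives $\mu\circ\varphi\circ\eta\sim\mu\circ\psi\circ\eta$. Hence $K_0,\dots,K_k$ satisfy the defining condition for the pair $\mu\circ\varphi,\mu\circ\psi$, and $\SD_m(\mu\circ\varphi,\mu\circ\psi)\leq k$.

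No step is a real obstacle. The only point that needs care is the lemma, namely that the image of a simplex under a simplicial map is a simplex. This is immediate from the definition of a simplicial map.
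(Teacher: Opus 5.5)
Your proof is correct and takes essentially the same route as the paper: keep the covering $K_0,\dots,K_k$ that realises $\SD_m(\varphi,\psi)$, and note that $\varphi\circ\eta\sim\psi\circ\eta$ implies $\mu\circ\varphi\circ\eta\sim\mu\circ\psi\circ\eta$. The only difference is that you explicitly check, step by step along the chain of contiguities, that post-composition with a simplicial map preserves contiguity classes; the paper uses this fact without comment.
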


\begin{proof}
  Let $\SD_m(\varphi,\psi)=k.$ Then there exist subcomplexes $K_0,\dots,K_k$ of K such that each $K_j$ has the property that any map $\eta:P\to K_j$ from an $m$-dimensional simplicial complex P, $\varphi\circ\eta\sim \psi\circ\eta$. Then, $\mu\circ\varphi\circ\eta\sim\mu\circ\psi\circ\eta.$ Therefore, $\SD_m(\mu\circ\varphi,\mu\circ\psi)\leq k.$
\end{proof}

\begin{corollary}
    Let $\varphi,\psi:K\to K'$ be simplicial maps and $\alpha:K'\to M$ be a simplicial map which has left strong homotopy inverse. Then 
$\SD_m(\alpha\circ\varphi,\alpha\circ\psi)= \SD_m(\varphi,\psi).$
\end{corollary}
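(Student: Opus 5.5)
We mirror the proof of the preceding corollary, with Proposition~\ref{yprop3.4} taking the place of Proposition~\ref{yprop3.3}. By hypothesis $\alpha$ has a left strong homotopy inverse, so there is a simplicial map $\beta:M\to K'$ with $\beta\circ\alpha\sim\id_{K'}$. The argument is a sandwich of two inequalities, each an instance of Proposition~\ref{yprop3.4}.

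First, applying Proposition~\ref{yprop3.4} with $\mu=\alpha$ gives
\[
\SD_m(\alpha\circ\varphi,\alpha\circ\psi)\leq\SD_m(\varphi,\psi).
\]
Second, applying Proposition~\ref{yprop3.4} to the pair $\alpha\circ\varphi,\alpha\circ\psi:K\to M$ with $\mu=\beta$ gives
\[
\SD_m(\beta\circ\alpha\circ\varphi,\beta\circ\alpha\circ\psi)\leq\SD_m(\alpha\circ\varphi,\alpha\circ\psi).
\]

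It remains to identify the left-hand side of the second inequality with $\SD_m(\varphi,\psi)$. Since $\beta\circ\alpha\sim\id_{K'}$, there is a chain of contiguities $\beta\circ\alpha=f_1\sim_c f_2\sim_c\dots\sim_c f_r=\id_{K'}$. Precomposing with $\varphi$ preserves contiguity: for any simplex $\sigma$ of $K$, $\varphi(\sigma)$ is a simplex of $K'$, so $f_i(\varphi(\sigma))\cup f_{i+1}(\varphi(\sigma))$ is a simplex of $K'$. Hence $f_i\circ\varphi\sim_c f_{i+1}\circ\varphi$ for each $i$, and so $\beta\circ\alpha\circ\varphi\sim\varphi$. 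The same argument gives $\beta\circ\alpha\circ\psi\sim\psi$. Proposition~\ref{pro32} then yields
\[
\SD_m(\beta\circ\alpha\circ\varphi,\beta\circ\alpha\circ\psi)=\SD_m(\varphi,\psi).
\]

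Chaining the three displays gives
\[
\SD_m(\varphi,\psi)=\SD_m(\beta\alpha\varphi,\beta\alpha\psi)\leq\SD_m(\alpha\varphi,\alpha\psi)\leq\SD_m(\varphi,\psi),
\]
so equality holds throughout. The only point needing care is the justification that precomposition preserves contiguity classes, and this is the routine check given above; otherwise the argument is immediate.
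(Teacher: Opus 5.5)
Your proposal is correct and matches the paper's proof: it uses the same chain $\SD_m(\varphi,\psi)=\SD_m(\beta\circ\alpha\circ\varphi,\beta\circ\alpha\circ\psi)\leq\SD_m(\alpha\circ\varphi,\alpha\circ\psi)\leq\SD_m(\varphi,\psi)$, justified by Propositions~\ref{yprop3.4} and~\ref{pro32}. Your explicit check that precomposing with $\varphi$ preserves contiguity classes is a detail the paper leaves implicit.
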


\begin{proof}
    Since $\alpha$ has left strong homotopy inverse, we have $\beta\circ\alpha\circ\varphi\sim\varphi$ and $\beta\circ\alpha\circ\psi\sim\psi.$ Thus, $$\SD_m(\varphi,\psi)=\SD_m(\beta\circ\alpha\circ\varphi,\beta\circ\alpha\circ\psi)\leq\SD_m(\alpha\circ\varphi,\alpha\circ\psi)\leq\SD_m(\varphi,\psi)$$ where the equality and inequalities follow from Proposition \ref{yprop3.4} and \ref{pro32}.
\end{proof}

The following theorem shows that the $m$-contiguity distance is a strong homotopy invariant.
\begin{theorem}\label{StrongHI}
    Let $\beta: K'\to K$ and $\alpha:L\to L'$ have right and left strong homotopy inverses, respectively. If the simplicial maps $\varphi,\psi:K\to L$ and $\varphi',\psi':K'\to L'$ make the following diagram commutative up to contiguity, then we have $\SD_m(\varphi,\psi)=\SD_m(\varphi',\psi').$

\[
\xymatrix{
K \ar@<0.5ex>[r]^{\varphi} \ar@<-0.5ex>[r]_{\psi} &
L \ar[d]^{\alpha} \\
K' \ar@<0.5ex>[r]^{\varphi'} \ar@<-0.5ex>[r]_{\psi'} \ar[u]_{\beta} &
L'
}
\]
\end{theorem}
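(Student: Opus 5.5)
The plan is to chain Proposition~\ref{pro32} with the two corollaries above. Read the diagram as saying that going around either way is contiguous: $\alpha\circ\varphi\circ\beta\sim\varphi'$ and $\alpha\circ\psi\circ\beta\sim\psi'$ as maps $K'\to L'$. Here $\beta$ has a right strong homotopy inverse $\beta'\colon K\to K'$, meaning $\beta\circ\beta'\sim\id_K$. Similarly $\alpha$ has a left strong homotopy inverse $\alpha'\colon L'\to L$, meaning $\alpha'\circ\alpha\sim\id_L$.

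First, Proposition~\ref{pro32} applied to the commutativity relations gives
\[
\SD_m(\varphi',\psi')=\SD_m(\alpha\circ\varphi\circ\beta,\ \alpha\circ\psi\circ\beta).
\]
Next we remove $\alpha$ using the corollary to Proposition~\ref{yprop3.4}. It applies to the pair $\varphi\circ\beta,\psi\circ\beta\colon K'\to L$, since $\alpha$ has a left strong homotopy inverse, and yields
\[
\SD_m(\alpha\circ\varphi\circ\beta,\alpha\circ\psi\circ\beta)=\SD_m(\varphi\circ\beta,\psi\circ\beta).
\]
Finally we remove $\beta$ using the corollary to Proposition~\ref{yprop3.3}. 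It applies to $\varphi,\psi\colon K\to L$, since $\beta$ has a right strong homotopy inverse, and gives $\SD_m(\varphi\circ\beta,\psi\circ\beta)=\SD_m(\varphi,\psi)$. Concatenating the three equalities proves the theorem.

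The only step needing care is verifying the hypotheses of the corollaries in the form they use. Proposition~\ref{pro32} requires that contiguity classes are preserved under pre- and post-composition. This holds because contiguity of simplicial maps is preserved by composing with any simplicial map on either side, so a chain of contiguities composes to a chain. That same fact is what makes the corollaries work: for example, $\alpha'\circ\alpha\circ\varphi\circ\beta\sim\varphi\circ\beta$ follows from $\alpha'\circ\alpha\sim\id_L$.

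I also need to interpret ``commutative up to contiguity'' correctly, since the diagram has $\beta$ pointing upward from $K'$ to $K$ and $\alpha$ pointing down from $L$ to $L'$. With this orientation the only sensible reading is $\alpha\varphi\beta\sim\varphi'$ and likewise for $\psi$, and that is the reading I will adopt.

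I do not expect a genuine obstacle; the argument is a direct bookkeeping of earlier results.
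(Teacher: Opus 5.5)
Your argument is correct and is the intended one. The paper states Theorem~\ref{StrongHI} without writing out a proof, but the two corollaries placed just before it, together with Proposition~\ref{pro32}, are set up for exactly your chain $\SD_m(\varphi',\psi')=\SD_m(\alpha\circ\varphi\circ\beta,\alpha\circ\psi\circ\beta)=\SD_m(\varphi\circ\beta,\psi\circ\beta)=\SD_m(\varphi,\psi)$, and your reading of ``commutative up to contiguity'' as $\alpha\circ\varphi\circ\beta\sim\varphi'$, $\alpha\circ\psi\circ\beta\sim\psi'$ (along with your justification of Proposition~\ref{pro32}) is sound.
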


\begin{theorem} \label{dimen}
    For two simplicial maps $\varphi,\psi:K\to K'$ we have $$\SD_{dim(K)}(\varphi,\psi)=\SD(\varphi,\psi).$$
\end{theorem}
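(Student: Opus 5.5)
We prove the two inequalities separately. The inequality $\SD_{\dim(K)}(\varphi,\psi)\leq\SD(\varphi,\psi)$ is Proposition~\ref{prop1}(1) with $m=\dim(K)$, so only the reverse inequality needs an argument. Write $n=\dim(K)$ and suppose $\SD_n(\varphi,\psi)=k$. Then there is a covering $K_0,\dots,K_k$ of $K$ by subcomplexes such that for every $j$ and every simplicial map $\eta:P\to K_j$ from an $n$-dimensional complex $P$, the maps $\varphi\circ\eta$ and $\psi\circ\eta$ lie in the same contiguity class. The goal is to show that this same covering witnesses $\SD(\varphi,\psi)\leq k$.

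The key step is to choose $P=K_j$ and $\eta=\iota_j:K_j\hookrightarrow K_j$, the identity (equivalently, the inclusion viewed as a map into $K_j$). Then $\varphi\circ\eta=\varphi|_{K_j}$ and $\psi\circ\eta=\psi|_{K_j}$, so these restrictions are in the same contiguity class. This is exactly the condition in the definition of $\SD$, which gives $\SD(\varphi,\psi)\leq k$.

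The only delicate point, and the main obstacle, is that the definition of $\SD_n$ quantifies over complexes $P$ of dimension exactly $n$, while a subcomplex $K_j$ satisfies only $\dim K_j\leq n$. I would handle this as follows.
\begin{enumerate}
\item If $\dim K_j=n$, take $P=K_j$ directly.
\item If $\dim K_j<n$, let $P=K_j\sqcup\Delta^n$, the disjoint union with an $n$-simplex. Define $\eta$ to be the identity on $K_j$ and to collapse $\Delta^n$ to a fixed vertex of $K_j$; this is simplicial and $\dim P=n$. The hypothesis gives a chain of contiguities $\varphi\circ\eta=\theta_0\sim_c\theta_1\sim_c\dots\sim_c\theta_r=\psi\circ\eta$ of maps $P\to K'$. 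Restricting every map in the chain to the subcomplex $K_j\subset P$ preserves each contiguity, since contiguity is checked simplexwise. The restricted chain connects $\varphi|_{K_j}$ to $\psi|_{K_j}$.
\end{enumerate}

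If edge-path connectedness of $P$ were required, I would instead attach $\Delta^n$ to $K_j$ along a single vertex, with $\eta$ collapsing $\Delta^n$ to that vertex; the same restriction argument applies. Alternatively, one can read "$m$-dimensional" as "of dimension at most $m$", consistent with Proposition~\ref{prop1}(2), and then take $P=K_j$ directly in all cases. In every version the covering $K_0,\dots,K_k$ works, and the two inequalities together give $\SD_{\dim(K)}(\varphi,\psi)=\SD(\varphi,\psi)$.
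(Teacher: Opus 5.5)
Your proof is correct and is essentially the paper's argument: the paper also gets the reverse inequality by choosing $\eta$ to map onto $K_j$ (in effect the identity of $K_j$), so that $\varphi\circ\eta$ and $\psi\circ\eta$ become $\varphi|_{K_j}$ and $\psi|_{K_j}$. Your padding of $K_j$ by an $n$-simplex, followed by restricting the contiguity chain back to $K_j$, makes rigorous two points the paper glosses over: $P$ must have dimension exactly $\dim(K)$, and surjectivity of $\eta$ alone would not suffice unless $\eta$ restricts to the identity on a copy of $K_j$ inside $P$.
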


\begin{proof}
    Let $m=dim(K).$ Proposition \ref{prop1} implies that $\SD_m(\varphi,\psi)\leq \SD(\varphi,\psi).$ 
    
    Now, suppose that $\SD_m(\varphi,\psi)=k.$ There exist subcomplexes $K_0,\dots,K_k$ of K such that each $K_j$ has the property that any map $\eta:P\to K_j$ from an $m$-dimensional simplicial complex P, $\varphi\circ\eta\sim \psi\circ\eta$. If $\eta$ is chosen to be surjective and to satisfy $K_j\subseteq \operatorname{Im}(\eta)$ for each $j$, then $ \varphi|_{K_j}\sim\psi|_{K_j} $ for every $j=0,\ldots,k$. This completes the proof.
\end{proof}

\begin{corollary}
    For any simplicial maps $\varphi,\psi:K\to K'$ with finite $\dim(K)$, Proposition \ref{prop1}(2) yields $$\SD_0(\varphi,\psi)\leq\ldots\leq \SD_{dim(K)}(\varphi,\psi)= \SD_{dim(K)+1}(\varphi,\psi)=\ldots=\SD(\varphi,\psi)$$
\end{corollary}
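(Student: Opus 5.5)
The plan is to obtain the whole chain by combining the two parts of Proposition~\ref{prop1} with Theorem~\ref{dimen}, using a squeeze argument for the indices at or above $\dim(K)$. Write $d=\dim(K)$, which is finite by hypothesis.

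\textbf{Indices below $d$.} For $0\le n\le n+1\le d$, Proposition~\ref{prop1}(2) gives $\SD_n(\varphi,\psi)\le \SD_{n+1}(\varphi,\psi)$. Chaining these inequalities yields
\[
\SD_0(\varphi,\psi)\le \SD_1(\varphi,\psi)\le\cdots\le \SD_d(\varphi,\psi).
\]

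\textbf{Indices at or above $d$.} Fix any $n\ge d$. Proposition~\ref{prop1}(2) gives $\SD_d(\varphi,\psi)\le \SD_n(\varphi,\psi)$, and Proposition~\ref{prop1}(1) gives $\SD_n(\varphi,\psi)\le \SD(\varphi,\psi)$. Theorem~\ref{dimen} states that $\SD_d(\varphi,\psi)=\SD(\varphi,\psi)$. Hence
\[
\SD(\varphi,\psi)=\SD_d(\varphi,\psi)\le \SD_n(\varphi,\psi)\le \SD(\varphi,\psi).
\]
Therefore $\SD_n(\varphi,\psi)=\SD(\varphi,\psi)$ for every $n\ge d$, which gives the stabilisation $\SD_d=\SD_{d+1}=\cdots=\SD$. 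The argument also covers the case where these values equal $\infty$, since the inequalities remain valid in $\mathbb{N}\cup\{\infty\}$.

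\textbf{Expected obstacle.} There is no real technical difficulty; the only point needing care is that Proposition~\ref{prop1}(2) is applied with $n>\dim(K)$. This relies on reading ``$m$-dimensional simplicial complex $P$'' in the definition of $\SD_m$ as a complex of dimension at most $m$. Under that reading, the test maps allowed for $\SD_n$ include those allowed for $\SD_m$ whenever $n\ge m$, so the monotonicity is automatic. I would state this convention explicitly, since it is the reading already implicit in the proof of Proposition~\ref{prop1}(2) and in Theorem~\ref{dimen}.
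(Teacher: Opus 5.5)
Your proof is correct and matches the paper's intended argument: the paper gives no separate proof, relying on Proposition~\ref{prop1}(2) for the monotone chain and Theorem~\ref{dimen} for equality at $\dim(K)$, and your squeeze $\SD=\SD_{\dim(K)}\le\SD_n\le\SD$ via Proposition~\ref{prop1}(1) supplies the stabilisation for $n>\dim(K)$ that the paper leaves implicit. The ``dimension at most $m$'' convention you flag is harmless but not needed: if $P$ must have dimension exactly $m$, you can wedge a constantly mapped simplex of the larger dimension onto a lower-dimensional test complex and then restrict the resulting contiguity chain, so monotonicity holds under either reading.
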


\begin{definition}
	Let $K$ be a simplicial complex and fix some integer $m\geq 1$. The $m$- simplicial LS category $scat_m(K)$ is the least integer $k\geq 0$ such that $K$ has a cover $K_0,\dots,K_k$ where $K_0,\dots,K_k$ are subcomplexes of $K$ and each $K_j$ has the property that any simplicial map $\eta:P\rightarrow K_j$ from an $m-$dimensional simplicial complex P, $\iota_j\circ\eta$ and $c_{v_0}\circ\eta$ are in the same contiguity class where $\iota_j:K_j\rightarrow K$ is the inclusion and $c_{v_0}:K_j\rightarrow K$ is the constant map.
\end{definition}

\begin{remark} \label{rem}
	$\scat_m(K)\leq \scat(K)$.
\end{remark}

\begin{lemma} \label{lemma}
	For two simplicial maps $\varphi,\psi:K\rightarrow K'$ we have $$
	\SD_m(\varphi,\psi)\leq \scat_m(K). $$
\end{lemma}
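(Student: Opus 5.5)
The idea is to reuse the cover that witnesses $\scat_m(K)$ as a cover witnessing the bound for $\SD_m(\varphi,\psi)$, and to transport the defining property through $\varphi$ and $\psi$ by composing contiguity chains. Throughout we use that contiguity classes are preserved under composition on either side. If $f_0\sim_c f_1$, then $\mu\circ f_0\sim_c \mu\circ f_1$ and $f_0\circ\eta\sim_c f_1\circ\eta$; this is immediate from the definition of contiguity, since simplicial maps send simplices to simplices. Hence $f\sim g$ implies $\mu\circ f\circ \eta\sim\mu\circ g\circ\eta$.

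Let $\scat_m(K)=k$. Choose subcomplexes $K_0,\dots,K_k$ covering $K$ such that for every $j$ and every simplicial map $\eta:P\to K_j$ from an $m$-dimensional complex $P$ we have
\[
\iota_j\circ\eta\sim c_{v_0}\circ\eta .
\]
Fix such $j$ and $\eta$. Composing the chain with $\varphi$ gives $\varphi\circ\iota_j\circ\eta\sim \varphi\circ c_{v_0}\circ\eta$. The right-hand side is the constant map at $\varphi(v_0)$, and the left-hand side is just $\varphi\circ\eta$ when $\eta$ is viewed as a map into $K$. Likewise $\psi\circ\eta\sim c_{\psi(v_0)}$.

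It remains to connect the two constant maps $c_{\varphi(v_0)}$ and $c_{\psi(v_0)}$ from $P$ to $K'$, and this is the only step that needs care. Since $K'$ is edge-path connected (the standing assumption of the paper), there is an edge path $\varphi(v_0)=w_0,w_1,\dots,w_r=\psi(v_0)$ in $K'$. For consecutive vertices, $c_{w_i}\sim_c c_{w_{i+1}}$: the image of any simplex under the two maps is $\{w_i,w_{i+1}\}$, which is a simplex of $K'$. Concatenating the chains then gives
\[
\varphi\circ\eta\sim c_{\varphi(v_0)}\sim c_{\psi(v_0)}\sim\psi\circ\eta .
\]
So the cover $K_0,\dots,K_k$ satisfies the defining condition of $\SD_m(\varphi,\psi)$, and therefore $\SD_m(\varphi,\psi)\le k=\scat_m(K)$.

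There are two subtleties to address. First, the constant map $c_{v_0}$ in the definition of $\scat_m$ may a priori depend on $j$; the argument above handles this, since it only uses some vertex of $K$ for each $j$. Second, if $\scat_m(K)=\infty$ the inequality is trivial.
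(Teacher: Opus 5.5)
Your proof is correct and follows the paper's argument. Both reuse the cover witnessing $\scat_m(K)$, post-compose the chain $\iota_j\circ\eta\sim c_{v_0}\circ\eta$ with $\varphi$ and with $\psi$, and then join the two resulting constant maps. The paper compresses that last step into $\varphi\circ\eta\sim\varphi\circ c_{v_0}\sim c\sim\psi\circ c_{v_0}$. Your explicit edge path from $\varphi(v_0)$ to $\psi(v_0)$ in $K'$ is what justifies it, and it relies on the standing edge-path connectedness assumption, which is genuinely needed here.
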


\begin{proof}
	Let $\scat_m(K)=k$. Then there exist subcomplexes $K_0,\dots,K_k$ of K such that each $K_j$ has the property that any map $\eta:P\rightarrow K_j$ from an $m-$ dimensional simplicial complex P,  $\iota_j\circ\eta\sim c_{v_0}\circ\eta$ where   $\iota_j:K_j\rightarrow K$ is the inclusion and $c_{v_0}:K_j\rightarrow K$ is the constant map. 
	$$ \varphi\circ\eta\sim \varphi\circ  c_{v_0} \sim c \sim \psi\circ c_{v_0} $$ is obtained.
\end{proof}

\begin{theorem}\label{sLSinclusion}
	For the inclusions $i_1,i_2:K\rightarrow K^2$ defined by $i_1(\sigma)=(\sigma,v_0)$ and $i_2(\sigma)=(v_0,\sigma)$, we have $$
	\SD_m(i_1,i_2)=\scat_m(K). $$
\end{theorem}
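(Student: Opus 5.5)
We prove the two inequalities $\SD_m(i_1,i_2)\leq \scat_m(K)$ and $\scat_m(K)\leq \SD_m(i_1,i_2)$ separately, using the two projections $\pr_1,\pr_2:K^2\to K$. Here $K^2$ is the categorical product, so a set of pairs is a simplex exactly when both of its projections are simplices. The key identities are $\pr_1\circ i_1=\id_K$, $\pr_1\circ i_2=c_{v_0}$, $\pr_2\circ i_1=c_{v_0}$ and $\pr_2\circ i_2=\id_K$.

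\emph{Step 1: $\scat_m(K)\leq \SD_m(i_1,i_2)$.} Let $\SD_m(i_1,i_2)=k$, realised by a cover $K_0,\dots,K_k$ of $K$. Fix $j$ and an $m$-dimensional $P$ with a simplicial map $\eta:P\to K_j$. By hypothesis $i_1\circ\iota_j\circ\eta\sim i_2\circ\iota_j\circ\eta$. Contiguity classes are preserved by post-composition, since the image of a simplex under a simplicial map is a simplex. Composing with $\pr_1$ therefore gives $\iota_j\circ\eta\sim c_{v_0}\circ\eta$. So the same cover witnesses $\scat_m(K)\leq k$. The argument is just Proposition~\ref{yprop3.4} applied pointwise to the test maps $\eta$.

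\emph{Step 2: $\SD_m(i_1,i_2)\leq\scat_m(K)$.} Let $\scat_m(K)=k$ with cover $K_0,\dots,K_k$. Given $\eta:P\to K_j$, we have a chain of contiguities $\iota_j\circ\eta=\varphi_0\sim_c\varphi_1\sim_c\dots\sim_c\varphi_r=c_{v_0}\circ\eta$ of maps $P\to K$. We build a chain in $K^2$ from $i_1\circ\iota_j\circ\eta=(\iota_j\eta,c_{v_0})$ to $i_2\circ\iota_j\circ\eta=(c_{v_0},\iota_j\eta)$, using pairs $(\varphi_s,c_{v_0})$ and $(c_{v_0},\varphi_s)$. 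These maps are simplicial into $K^2$ because each coordinate is simplicial. If $\varphi_s\sim_c\varphi_{s+1}$, then $(\varphi_s,c_{v_0})\sim_c(\varphi_{s+1},c_{v_0})$: for a simplex $\tau$ of $P$, the union of the images has first projection $\varphi_s(\tau)\cup\varphi_{s+1}(\tau)$, a simplex, and second projection $\{v_0\}$. This moves us from $(\iota_j\eta,c_{v_0})$ to $(c_{v_0},c_{v_0})$. Running the second-coordinate chain in reverse then moves us from $(c_{v_0},c_{v_0})$ to $(c_{v_0},\iota_j\eta)$. Hence $i_1\circ\iota_j\circ\eta\sim i_2\circ\iota_j\circ\eta$, and the same cover witnesses $\SD_m(i_1,i_2)\leq k$.

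\emph{Main obstacle.} The delicate point is that the definition of $\scat_m$ uses a fixed base vertex $v_0$, while the inclusions $i_1,i_2$ use the same $v_0$. We must make sure the constant-map convention matches, or note that for an edge-path connected $K$ any two constant maps are in the same contiguity class, so the choice of vertex is irrelevant. We also need the categorical product simplex criterion in both steps: the projections are simplicial, and pairs of simplicial maps give simplicial maps. Once these are checked, both inequalities are immediate.
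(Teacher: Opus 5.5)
Your proof is correct and takes essentially the same route as the paper. The inequality $\SD_m(i_1,i_2)\leq\scat_m(K)$ is the paper's Lemma~\ref{lemma} specialised to $i_1,i_2$: your explicit chain through the constant map $(c_{v_0},c_{v_0})$ is exactly that argument, since $i_1\circ c_{v_0}=i_2\circ c_{v_0}$. The reverse inequality comes from projecting onto the first factor, which the paper does only implicitly and which you make explicit via $\pr_1\circ i_1=\id_K$ and $\pr_1\circ i_2=c_{v_0}$.
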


\begin{proof}
	By Lemma \ref{lemma}, we know that $\SD_m(i_1,i_2)\leq\scat_m(K)$. Suppose that $\SD_m(i_1,i_2)=k$. There exist subcomplexes $K_0,\dots,K_k$ of K such that each $K_j$ has the property that any map $\eta:P\rightarrow K_j$ from an $m-$ dimensional simplicial complex P, $i_1\circ\eta\sim i_2\circ\eta$.
	For all $\sigma \in K$,
	$$ i_1\circ\eta(\sigma)=i_1(\eta(\sigma))=(\eta(\sigma),v_0)\sim(v_0,\eta(\sigma)).$$ Namely, $\eta$ and $ c_{v_0}$ are in the same contiguity class.

\end{proof}

\begin{theorem}\label{scat}
    For the identity map $ \id:K\rightarrow K $ and the constant map $c_{v_0}:K\rightarrow K$, we have $$ \scat_m(K)=SD_m(id,c_{v_0}).$$
\end{theorem}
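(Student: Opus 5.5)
The plan is to prove the two inequalities separately, in each case by showing that a cover witnessing one invariant also witnesses the other. Both arguments are direct unwinding of the definitions of $\scat_m(K)$ and $\SD_m$. The basic observation is that for a subcomplex $K_j\subseteq K$ with inclusion $\iota_j:K_j\to K$, and any simplicial map $\eta:P\to K_j$ from an $m$-dimensional complex $P$, we have
\[
\id\circ(\iota_j\circ\eta)=\iota_j\circ\eta, \qquad c_{v_0}\circ(\iota_j\circ\eta)=c_{v_0}\circ\eta .
\]
On the right, $c_{v_0}$ denotes the constant map $K_j\to K$ (resp.\ $K\to K$), and both composites are the constant map $P\to K$ at $v_0$. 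In the definition of $\SD_m$, a map $\eta:P\to K_j$ is implicitly composed with the inclusion into $K$. So the condition ``$\id\circ\eta\sim c_{v_0}\circ\eta$'' appearing in $\SD_m(\id,c_{v_0})$ is literally the condition ``$\iota_j\circ\eta\sim c_{v_0}\circ\eta$'' appearing in $\scat_m(K)$.

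\textbf{Step 1: $\SD_m(\id,c_{v_0})\le \scat_m(K)$.} Let $\scat_m(K)=k$ with cover $K_0,\dots,K_k$. For each $j$ and each $\eta:P\to K_j$, the definition gives $\iota_j\circ\eta\sim c_{v_0}\circ\eta$. By the identity displayed above, this says $\id\circ\eta\sim c_{v_0}\circ\eta$. Hence the same cover witnesses $\SD_m(\id,c_{v_0})\le k$. This is also the special case $\varphi=\id$, $\psi=c_{v_0}$ of Lemma~\ref{lemma}, and I would cite it that way for brevity.

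\textbf{Step 2: $\scat_m(K)\le \SD_m(\id,c_{v_0})$.} Let $\SD_m(\id,c_{v_0})=k$ with cover $K_0,\dots,K_k$ of $K$. For $\eta:P\to K_j$, we are given $\id\circ\eta\sim c_{v_0}\circ\eta$ as maps $P\to K$. By the same identity this reads $\iota_j\circ\eta\sim c_{v_0}\circ\eta$, which is exactly the categorical-type condition in the definition of $\scat_m$. Therefore $\scat_m(K)\le k$.

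The only point requiring care is a bookkeeping issue about base points, not a real obstacle. The definition of $\scat_m$ uses a constant map at some fixed vertex $v_0$, whereas the $\SD_m$ statement fixes $v_0$ once and for all. Since $K$ is edge-path connected, any two constant maps $c_{v}$ and $c_{w}$ are in the same contiguity class: consecutive vertices on an edge path give contiguous constant maps. Precomposition with $\eta$ preserves this. So the choice of vertex is immaterial, and by Proposition~\ref{pro32} the value of $\SD_m(\id,c_{v_0})$ does not depend on $v_0$. With this remark, Steps 1 and 2 combine to give the equality.
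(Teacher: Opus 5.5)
Your proposal is correct and matches the paper's proof: the paper also gets $\scat_m(K)\le \SD_m(\id,c_{v_0})$ by noting that $\id\circ\eta$ for $\eta:P\to K_j$ is literally $\iota_j\circ\eta$, and takes the reverse inequality from Lemma~\ref{lemma}. Your extra remark, that the choice of base vertex does not matter (by edge-path connectivity and Proposition~\ref{pro32}), is sound; the paper leaves this point implicit.
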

\begin{proof}
    Suppose that $\SD_m(id,c_{v_0})=k.$ Then there exist subcomplexes $K_0,\dots,K_k$ of $K$ such that each $K_j$ has the property that any map $\eta:P\to K_j$ from $m-$ dimensional simplicial complex P, $id\circ\eta\sim c_{v_0}\circ\eta$. Then we say that inclusion from $K_j$ to $K$ composite $\eta$ and resctriction of $c_{v_0}$ on $K_j$ are same contiguity class for all $K_j$, that is $\iota_j\circ\eta\sim c_{v_0}\circ\eta$. We conclude that $\scat_m(K)\leq k$.

   The other direction follows from the Lemma \ref{lemma}.
\end{proof}

\begin{corollary}
    $\scat_{dim(K)}(K)=\scat(K).$
\end{corollary}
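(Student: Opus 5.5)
The corollary is the special case $\varphi=\id$, $\psi=c_{v_0}$ of the two preceding results. I will apply Theorem~\ref{scat} twice and Theorem~\ref{dimen} once, and finish with the fact that $\SD(\id,c_{v_0})=\scat(K)$.

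The chain of equalities, with $m=\dim(K)$, is
\[
\scat_{\dim(K)}(K)=\SD_{\dim(K)}(\id,c_{v_0})=\SD(\id,c_{v_0})=\scat(K).
\]
The first equality is Theorem~\ref{scat} with $m=\dim(K)$. The second is Theorem~\ref{dimen} applied to $\varphi=\id$ and $\psi=c_{v_0}$.

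For the last equality I will cite the known identity $\SD(\id,c_{v_0})=\scat(K)$ from \cite{BPV}, which the paper already uses in its first example. If a direct argument is preferred, it is a restriction check. A subcomplex $K_j$ on which $\id|_{K_j}$ and $c_{v_0}|_{K_j}$ lie in the same contiguity class is exactly a subcomplex on which the inclusion $\iota_j$ lies in the contiguity class of a constant map, i.e.\ a categorical subcomplex. Edge-path connectedness, assumed throughout the paper, lets us move the constant vertex to $v_0$. Hence the coverings admissible in the definition of $\SD(\id,c_{v_0})$ are precisely the categorical coverings counted by $\scat(K)$.

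The only point needing care is that $\dim(K)$ must be finite, so that $m=\dim(K)$ is a legitimate index; I will assume this, as in the preceding corollary. Beyond that, the argument rests entirely on Theorem~\ref{dimen}, which I take as given. I expect no real obstacle: the main step is just checking that the hypotheses of Theorem~\ref{scat} and Theorem~\ref{dimen} match the choice $\varphi=\id$, $\psi=c_{v_0}$.
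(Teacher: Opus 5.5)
Your proof is correct and follows the paper's own argument. The paper likewise combines Theorem~\ref{scat} with Theorem~\ref{dimen} for $\varphi=\id$, $\psi=c_{v_0}$, and it uses the identity $\SD(\id,c_{v_0})=\scat(K)$ without stating it. It also cites Remark~\ref{rem} separately for the easy inequality, which your chain of equalities makes redundant, and your explicit check of the last identity (moving the constant vertex along an edge path) fills in a step the paper leaves implicit.
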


\begin{proof}
    Let $m=dim(K)$. $\scat_m(K)\leq \scat(K) $ follows from Remark \ref{rem}. 
    
    For the reverse inequality, taking $id:K\to K$ to be the identity map and $c_{v_0}:K\to K$ to be the constant map in Theorem \ref{dimen} , and using Theorem \ref{scat}, we obtain the required equality.
\end{proof}

\begin{proposition} \label{collapse}
    If $K$ is strongly collapsible then $\scat_m(K)=0.$
\end{proposition}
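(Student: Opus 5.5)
The plan is to pass through the classical simplicial LS category and then apply Remark~\ref{rem}.

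First I recall the standard fact, established in \cite{FTMVV}, that a strongly collapsible complex has the strong homotopy type of a point. Concretely, $K$ strongly collapses to a single vertex $v_0$ through a finite sequence of elementary strong collapses. Each elementary strong collapse removes a dominated vertex $v$, say dominated by $v'$. The retraction $r\colon K\to K\setminus v$ sending $v\mapsto v'$ is then contiguous to the identity of $K$: for every simplex $\sigma\ni v$, the set $\sigma\cup\{v'\}$ is a simplex because $\mathrm{lk}(v)$ is a cone with apex $v'$. Composing these retractions gives a simplicial map $r\colon K\to K$ with image $\{v_0\}$, so $r=c_{v_0}$, and $r\sim\id_K$ by chaining the contiguities. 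The chaining needs each step's contiguity to be composed with the later retractions, and this uses that contiguity is preserved under composition with simplicial maps.

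Next, since $\id_K\sim c_{v_0}$, the whole complex $K$ is a categorical subcomplex. The single-element cover $\{K\}$ therefore shows $\scat(K)=0$. Remark~\ref{rem} then gives $\scat_m(K)\le\scat(K)=0$, hence $\scat_m(K)=0$.

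Alternatively, one can argue directly from the definition. For any $\eta\colon P\to K$ with $P$ of dimension $m$, composing the chain $\id_K\sim c_{v_0}$ on the right with $\eta$ gives $\eta\sim c_{v_0}\circ\eta$. This is exactly the condition for the cover $\{K\}$ in the definition of $\scat_m$. Equivalently, by Theorem~\ref{scat} and Proposition~\ref{pro32}, $\scat_m(K)=\SD_m(\id,c_{v_0})=\SD_m(c_{v_0},c_{v_0})=0$.

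The only nontrivial step is the first one, namely that strong collapsibility yields $\id_K\sim c_{v_0}$. I would cite it from \cite{FTMVV} or Barmak--Minian rather than reprove it, giving only the dominated-vertex contiguity check sketched above.
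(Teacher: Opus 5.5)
Your proof is correct and follows the same route as the paper: strong collapsibility gives $\scat(K)=0$, and then Remark~\ref{rem} yields $\scat_m(K)\le\scat(K)=0$. The only difference is that the paper states the first step without justification, while you sketch it through the dominated-vertex retractions, each contiguous to the identity, chained into $\id_K\sim c_{v_0}$.
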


\begin{proof}
    If $K$ is strongly collapsible then $\scat(K)=0$. From Remark \ref{rem} , we have $\scat_m(K)=0.$
\end{proof}

\begin{proposition} 
    If $K$ is strongly collapsible then $\SD_m(\varphi,\psi)=0$ for any $\varphi,\psi:K\to K'$ simplicial maps.
\end{proposition}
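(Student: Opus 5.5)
The plan is to combine Proposition~\ref{collapse} with Lemma~\ref{lemma}. If $K$ is strongly collapsible, then Proposition~\ref{collapse} gives $\scat_m(K)=0$. Lemma~\ref{lemma} states that $\SD_m(\varphi,\psi)\leq \scat_m(K)$ for any simplicial maps $\varphi,\psi:K\to K'$. Hence $0\leq \SD_m(\varphi,\psi)\leq 0$, and the claim follows at once.

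The proof of Lemma~\ref{lemma} is written tersely, so it is worth checking the chain it relies on in this case. Since $\scat_m(K)=0$, the single subcomplex $K_0=K$ has the following property: for every simplicial map $\eta:P\to K$ with $\dim P=m$, we have $\eta\sim c_{v_0}\circ\eta$. This is the constant map at $v_0$. Composing a contiguity chain with $\varphi$ preserves contiguity, because $\varphi$ maps simplices to simplices. So
\[
\varphi\circ\eta\sim \varphi\circ c_{v_0}\circ\eta = c_{\varphi(v_0)},
\]
and likewise $\psi\circ\eta\sim c_{\psi(v_0)}$.

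The only real point is that the two constant maps $c_{\varphi(v_0)}$ and $c_{\psi(v_0)}$ must lie in the same contiguity class. This follows from the standing assumption that all complexes are edge-path connected. Choose an edge path $\varphi(v_0)=w_0,w_1,\dots,w_r=\psi(v_0)$ in $K'$. Consecutive constant maps $c_{w_i}$ and $c_{w_{i+1}}$ are contiguous, since $\{w_i,w_{i+1}\}$ is a simplex. This yields $\varphi\circ\eta\sim\psi\circ\eta$ for every such $\eta$. The cover $\{K\}$ then witnesses $\SD_m(\varphi,\psi)=0$.

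Alternatively, one can argue directly. Strong collapsibility gives $\id_K\sim c_{v_0}$ (this is $\scat(K)=0$). Therefore $\varphi\sim c_{\varphi(v_0)}\sim c_{\psi(v_0)}\sim\psi$, and so $\SD(\varphi,\psi)=0$. Proposition~\ref{prop1}(1) then gives $\SD_m(\varphi,\psi)\leq\SD(\varphi,\psi)=0$. I expect no real obstacle in either route; the one step needing care is the connectivity of $K'$ used to join the two constant maps.
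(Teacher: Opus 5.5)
Your proof is correct and is the paper's argument: Proposition~\ref{collapse} gives $\scat_m(K)=0$, and Lemma~\ref{lemma} then forces $\SD_m(\varphi,\psi)=0$. Your unpacking of the lemma correctly fills in a step the paper's one-line chain skips, namely using edge-path connectivity of $K'$ to put $c_{\varphi(v_0)}$ and $c_{\psi(v_0)}$ in the same contiguity class; your alternative route through $\SD(\varphi,\psi)=0$ and Proposition~\ref{prop1}(1) is also valid.
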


\begin{proof}
    From the Proposition \ref{collapse}, we have $\scat_m(K)=0$. If we use Lemma \ref{lemma}, we conclude that $\SD_m(\varphi,\psi)=0.$
\end{proof}

The converse does not hold in general, as shown by the following example.

\begin{example}
    
Consider the simplicial complex $K$ given in Figure \ref{pic}. Let  $id:K\to K $ be the identity map and, let $c_0:K\to K$ denote the constant map sending every vertex of $K$ to the vertex $0$. Let $P$ be a $0$-dimensional simplicial complex. For a simplicial map $\eta:P\to K$, there are exactly six possibilities, denoted by $\eta_0,\ldots,\eta_5$ each of which is constant. Moreover, 
\[
id\circ\eta_j\sim_c c_0\circ\eta_j
\]
for $j=0,1,2,3,4$, while
\[
id\circ\eta_5\sim_c\eta_3\sim_c c_0\circ\eta_5.
\]
Hence, $id\circ\eta_j$ and $c_0\circ\eta_j$ belong to the same contiguity class for every $j=0,\ldots,5$. Therefore, $\SD_0(id,c_0)=0.$ 

On the other hand, it is known from \cite{FTMVV} that $K$ is not strongly collapsible.

\begin{figure}
\centering
\begin{tikzpicture}[scale=0.5]

\fill[gray!40] (0,0) -- (4,6.8) -- (8,0) -- cycle;

\draw (0,0) -- (3,2.8);
\draw (0,0) -- (4,1);
\draw (4,6.8) -- (3,2.8);
\draw (4,6.8) -- (5,2.8);
\draw (8,0) -- (4,1);
\draw (8,0) -- (5,2.8);
\draw (3,2.8) -- (4,1);
\draw (3,2.8) -- (5,2.8);
\draw (5,2.8) -- (4,1);

\draw (0,0) -- (4,6.8);
\draw (0,0) -- (8,0);
\draw (8,0) -- (4,6.8);

\filldraw (0,0) circle (4pt);
\filldraw (4,6.8) circle (4pt);
\filldraw (8,0) circle (4pt);
\filldraw (3,2.8) circle (4pt);
\filldraw (4,1) circle (4pt);
\filldraw (5,2.8) circle (4pt);

\node[below left]  at (0,0) {$0$};
\node[above]       at (4,6.8) {$1$};
\node[below right] at (8,0) {$2$};

\node[left]        at (3,2.8) {$3$};
\node[below]       at (4,1) {$4$};
\node[right]       at (5,2.8) {$5$};

\end{tikzpicture}
\caption{}
\label{pic}
\end{figure}
\end{example}
In \cite{BPV}, Borat, Pamuk and Vergili proved inequality for geometric relations and barycentric subdivision for simplicial maps in the setting of the contiguity distance. In order to completeness of our study of $m-$ contiguity distance of simplicial maps, we state them and give their proofs in our settings.  

\begin{theorem}\label{geometric}
For two simplicial maps $\varphi,\psi:K\rightarrow K'$ we have 
    $$ D_m(|\varphi|,|\psi|)\leq SD_m(\varphi,\psi). $$
\end{theorem}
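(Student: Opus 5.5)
Set $k=\SD_m(\varphi,\psi)$ and fix a covering of $K$ by subcomplexes $K_0,\dots,K_k$ that witnesses this value. The plan is to build from it an open cover $U_0,\dots,U_k$ of $|K|$ that witnesses $D_m(|\varphi|,|\psi|)\le k$. Subcomplexes give closed subsets $|K_j|$, so we thicken each one. Let $U_j$ be the open star of $|K_j|$ in the second barycentric subdivision, i.e.\ a regular neighbourhood of $|K_j|$. The standard properties we need are that the $U_j$ are open, that they cover $|K|$ because the $|K_j|$ do, and that there is a deformation retraction $r_j\colon U_j\to |K_j|$ whose composite with the inclusion $U_j\hookrightarrow|K|$ is homotopic to that inclusion inside $|K|$. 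If one prefers to avoid this construction, the paper's convention in \cite{BPV} is that $D_m$ may equally be computed with closed covers by subcomplexes for CW pairs, and the same argument then applies directly to $|K_j|$.

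Next take a map $h\colon P\to U_j$ from an $m$-dimensional cell complex $P$. Compose with $r_j$ to get $h'=r_j\circ h\colon P\to|K_j|$, which is homotopic to $h$ as a map into $|K|$. Since $P$ is $m$-dimensional, we may replace it up to homotopy equivalence by a finite $m$-dimensional simplicial complex. This uses cellular approximation and the fact that CW complexes are homotopy equivalent to simplicial complexes of the same dimension; for a compact image we reduce to finite subcomplexes. By the simplicial approximation theorem there is then a subdivision $\mathrm{sd}^N P$, still $m$-dimensional, and a simplicial map $\eta\colon \mathrm{sd}^N P\to K_j$ with $|\eta|\simeq h'$.

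Now apply the defining property of $K_j$: $\varphi\circ\eta$ and $\psi\circ\eta$ lie in the same contiguity class. Contiguous simplicial maps have homotopic realisations, so $|\varphi|\circ|\eta|\simeq|\psi|\circ|\eta|$. Chaining the homotopies gives $|\varphi|\circ h\simeq|\varphi|\circ h'\simeq|\psi|\circ h'\simeq|\psi|\circ h$. Hence each $U_j$ has the required property and $D_m(|\varphi|,|\psi|)\le k$.

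The main obstacle is the passage from an arbitrary $m$-dimensional cell complex $P$ to a simplicial one. Simplicial approximation needs a compact polyhedral domain, so the plan is to reduce to finite subcomplexes and triangulate up to homotopy without raising the dimension. The definition of $D_m$ allows working up to homotopy equivalence of $P$, which justifies this step. A secondary point is making sure the regular neighbourhood retraction stays homotopic to the inclusion inside $|K|$.
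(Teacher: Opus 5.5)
Your proof is correct and has the same skeleton as the paper's. You take the subcomplexes $K_0,\dots,K_k$ realising $\SD_m(\varphi,\psi)$, use $|K_0|,\dots,|K_k|$ to cover $|K|$, and combine ``same contiguity class implies homotopic realisations'' with functoriality of realisation.

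The difference is one of completeness. The paper's proof stops at that point. It uses the closed sets $|K_j|$ even though $D_m$ is defined through open covers. It also only checks $|\varphi|\circ|\eta|\simeq|\psi|\circ|\eta|$ for realisations of simplicial maps $\eta\colon P\to K_j$, whereas $D_m$ quantifies over arbitrary continuous maps $h\colon P\to U_j$ from $m$-dimensional cell complexes.

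Your two additional steps are precisely what that short argument needs:
\begin{itemize}
\item thickening $|K_j|$ to an open regular neighbourhood $U_j$ that deformation retracts onto $|K_j|$, which works because $\sd(K_j)$ is a full subcomplex of $\sd(K)$;
\item replacing $h$ by $r_j\circ h$ and then by a simplicial approximation defined on an $m$-dimensional triangulated model of $P$.
\end{itemize}
Your aside about a closed-cover convention taken from the earlier paper is unnecessary. The neighbourhood argument is exactly what justifies passing from $|K_j|$ to an open set, so you should rely on it rather than on an unverified convention.

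One caveat on the step you flagged yourself. If $P$ may be an infinite cell complex, its image need not be compact, so you cannot reduce to finite subcomplexes or use $\mathrm{sd}^N P$. Instead:
\begin{itemize}
\item replace $P$ by a homotopy equivalent simplicial complex of the same dimension (Hatcher, Theorem 2C.5);
\item apply the simplicial approximation theorem for arbitrary simplicial complexes, which works over a possibly non-uniform subdivision that is still $m$-dimensional;
\item precompose the resulting homotopy with a homotopy inverse of the equivalence to return to $P$.
\end{itemize}
Cellular approximation is not needed anywhere in the argument.
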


\begin{proof}
    Suppose $\SD_m(\varphi,\psi)=k.$ Then there exist a covering of $K$ by subcomplexes $K_0,\dots,K_k$ with the property that, for each $K_j$ any simplicial map $\eta:P\to K_j$ from an $m-$ dimensional simplicial complex $P$, $\varphi\circ\eta$ and $\psi\circ\eta$ are in the same contiguity class. The union of the closed subsets $|K_0|,\dots,|K_k|$ covers $|K|$ and the geometric realisations of $\varphi\circ\eta$ and $\psi\circ\eta$ are homotopic continuous maps. Since the geometric realisation is a covariant functor, we have $|\varphi|\circ|\eta|\simeq |\psi|\circ|\eta|.$
\end{proof}

\begin{definition}
The barycentric subdivison of a given simplicial complex $K$ is the simplicial complex $\sd(K)$ whose set of vertices is $K$ and each n-simplex in $\sd(K)$ is of the form $\{\sigma_{0},\sigma_{1},\cdots,\sigma_{n}\}$ where $\sigma_{0}\subsetneq \sigma_{1}\subsetneq,\cdots,\subsetneq\sigma_{n}$. 
\end{definition}

 \begin{definition}
     For a simplicial map $\phi:K\to L$, the induced map $\sd(\phi):\sd(K)\to\sd(L)$ is given by $\sd(\phi)(\{\sigma_{0},\sigma_{1},\cdots,\sigma_{n}\})=\{\phi(\sigma_{0}),\phi(\sigma_{1}),\cdots,\phi(\sigma_{n})\}$
 \end{definition}
Notice that $\sd(\phi)$ is a simplicial map, $\sd(\id)=\id$ and $\sd(\phi\circ\psi)=\sd(\phi)\circ\sd(\psi)$.

\begin{proposition}[\cite{FTMVMV2}]\label{barycentric}
If the simplicial maps $\phi,\psi:K\to L$ are in the same contiguity class, so are $\sd(\phi)$ and $\sd(\psi).$
\end{proposition}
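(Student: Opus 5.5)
The claim is that contiguity is preserved under barycentric subdivision. My plan is to reduce to a single contiguity step and then compose. Since being in the same contiguity class means there is a finite chain $\phi=\phi_1\sim_c\phi_2\sim_c\dots\sim_c\phi_m=\psi$, and $\sd$ acts on maps functorially, it suffices to prove the following: if $\phi\sim_c\psi$, then $\sd(\phi)$ and $\sd(\psi)$ lie in the same contiguity class. Applying this to each consecutive pair $\phi_i,\phi_{i+1}$ and concatenating the resulting chains then gives $\sd(\phi)\sim\sd(\psi)$.

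For a single contiguity $\phi\sim_c\psi$, I will not try to prove that $\sd(\phi)$ and $\sd(\psi)$ are directly contiguous. Given a chain $\sigma_0\subsetneq\dots\subsetneq\sigma_n$, the sets $\phi(\sigma_i)$ and $\psi(\sigma_j)$ need not be nested, so the union of the two image chains is generally not a chain. Instead I interpolate through an auxiliary map. Let $\theta:\sd(K)\to\sd(L)$ be defined on vertices by $\theta(\sigma)=\phi(\sigma)\cup\psi(\sigma)$; this is a simplex of $L$ because $\phi$ and $\psi$ are contiguous. If $\sigma\subseteq\tau$, then $\theta(\sigma)\subseteq\theta(\tau)$. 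Hence $\theta$ carries chains to chains (after discarding repeats), so $\theta$ is simplicial.

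The next step is to check that $\sd(\phi)\sim_c\theta$. Take a simplex $\{\sigma_0\subsetneq\dots\subsetneq\sigma_n\}$ of $\sd(K)$. The combined vertex set is $\{\phi(\sigma_i)\}\cup\{\theta(\sigma_j)\}$. For every $i,j$ we have $\phi(\sigma_i)\subseteq\theta(\sigma_i)$, and $\phi(\sigma_i)\subseteq\phi(\sigma_j)\subseteq\theta(\sigma_j)$ when $i\le j$. The remaining case is $i>j$, where I must compare $\phi(\sigma_i)$ with $\theta(\sigma_j)$, and these need not be nested. I expect this to be the main obstacle.

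If the direct check fails, I will first try splitting $\theta$ into a longer chain of intermediate maps. The alternative is to use the standard fact, which is the route of \cite{FTMVMV2}, that $\sd$ of a map is contiguous to a composite with the "last vertex"-type maps. I would try the version of that argument that uses the maps $\sigma\mapsto\phi(\sigma)\cup\psi(\sigma)$ and compare along pairs $\phi(\sigma_i)$, $\theta(\sigma_n)$. Checking the nesting against the top element $\sigma_n$ should resolve the $i>j$ case, possibly at the cost of one more intermediate map. The same argument with $\psi$ in place of $\phi$ gives $\theta\sim_c\sd(\psi)$. Combining the two yields $\sd(\phi)\sim\theta\sim\sd(\psi)$, which completes the proposition.
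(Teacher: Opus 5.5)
Your reduction to a single contiguity step is sound. So is the auxiliary map $\theta(\sigma)=\phi(\sigma)\cup\psi(\sigma)$: it is inclusion-preserving, hence a simplicial map $\sd(K)\to\sd(L)$, and $\sd(\phi)(\sigma)\subseteq\theta(\sigma)\supseteq\sd(\psi)(\sigma)$. The gap is the step $\sd(\phi)\sim_c\theta$. It is false in general, and your fallback does not repair it.

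Take $K$ to be the edge $\{v,w\}$ and $L$ the full $2$-simplex on $\{x,y,z\}$, with $\phi(v)=x$, $\psi(v)=y$ and $\phi(w)=\psi(w)=z$. On the simplex $\{\{v\},\{v,w\}\}$ of $\sd(K)$, $\sd(\phi)$ gives $\{x\},\{x,z\}$ and $\theta$ gives $\{x,y\},\{x,y,z\}$. Since $\{x,z\}$ and $\{x,y\}$ are not nested, the union is not a simplex of $\sd(L)$.

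Checking against the top element $\theta(\sigma_n)$ only shows that all image vertices are faces of one simplex of $L$. That is the contiguity criterion for maps into $L$, which is why Lemma~\ref{simp-approx} works. In $\sd(L)$, however, a set of vertices is a simplex only if it is a chain, so that comparison proves nothing here, and the extra intermediate map is never identified.

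The missing idea is to pass from $\sd(\phi)$ to $\theta$ by switching values one dimension at a time, from the top down, so that each chain is affected in at most one place. For $d\ge 0$, let $f_d(\sigma)=\theta(\sigma)$ if $\dim\sigma\ge d$ and $f_d(\sigma)=\phi(\sigma)$ otherwise.

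Each $f_d$ is inclusion-preserving: if $\sigma\subsetneq\tau$ with $\dim\sigma<d\le\dim\tau$, then $\phi(\sigma)\subseteq\theta(\sigma)\subseteq\theta(\tau)$. Hence each $f_d$ is simplicial.

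The maps $f_{d+1}$ and $f_d$ differ only on $d$-simplices. If a chain $\sigma_0\subsetneq\dots\subsetneq\sigma_n$ contains no $d$-simplex, the two maps agree on it. Otherwise it contains exactly one, say $\sigma_k$, and the union of the two image sets is
\[
\phi(\sigma_0)\subseteq\dots\subseteq\phi(\sigma_k)\subseteq\theta(\sigma_k)\subseteq\theta(\sigma_{k+1})\subseteq\dots\subseteq\theta(\sigma_n),
\]
which is a chain. So $f_{d+1}\sim_c f_d$.

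With $N=\dim K$ this gives $\sd(\phi)=f_{N+1}\sim_c f_N\sim_c\dots\sim_c f_0=\theta$, and the same construction with $\psi$ gives $\theta\sim\sd(\psi)$. Switching low dimensions first fails, because the resulting map need not be inclusion-preserving. The argument as given also needs $\dim K<\infty$.
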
 

\begin{proposition}\label{m-barycentric}
Let $\phi,\psi:K\to L$ be the simplicial maps. If given the simplicial map $\eta:P\to K$ with $\phi\circ\eta$ and $\psi\circ\eta$ are in the same contiguity class, then $\sd(\phi)\circ\sd(\eta)$ and $\sd(\psi)\circ\sd(\eta)$ are in the same contiguity class.
\end{proposition}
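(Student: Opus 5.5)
The plan is to reduce the statement to Proposition~\ref{barycentric} together with the functoriality of $\sd$ noted just before it, namely $\sd(\phi\circ\psi)=\sd(\phi)\circ\sd(\psi)$. The point is that the hypothesis concerns the composites $\phi\circ\eta,\psi\circ\eta:P\to L$, which are themselves simplicial maps in the same contiguity class. So the whole argument should consist of applying Proposition~\ref{barycentric} to these two composites, not to $\phi$ and $\psi$ themselves.

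The steps run as follows. First, set $f:=\phi\circ\eta$ and $g:=\psi\circ\eta$, both simplicial maps $P\to L$. By hypothesis, $f\sim g$, so there is a chain $f=f_1\sim_c f_2\sim_c\cdots\sim_c f_r=g$ of simplicial maps $P\to L$. Second, apply Proposition~\ref{barycentric} to the pair $f,g$ to conclude that $\sd(f)\sim\sd(g)$ as maps $\sd(P)\to\sd(L)$. If one wants to see this concretely, each contiguity $f_i\sim_c f_{i+1}$ induces $\sd(f_i)\sim\sd(f_{i+1})$, and these concatenate into a chain. Third, rewrite the result using functoriality: $\sd(f)=\sd(\phi\circ\eta)=\sd(\phi)\circ\sd(\eta)$ and $\sd(g)=\sd(\psi)\circ\sd(\eta)$. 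This gives exactly $\sd(\phi)\circ\sd(\eta)\sim\sd(\psi)\circ\sd(\eta)$.

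The only point I expect to require care is the functoriality identity $\sd(\phi\circ\eta)=\sd(\phi)\circ\sd(\eta)$ on vertices of $\sd(P)$. A vertex of $\sd(P)$ is a simplex $\tau$ of $P$, and both sides send it to the simplex $\phi(\eta(\tau))$ of $L$, since images of simplices under composites are computed stepwise. It is also worth checking that the definition of $\sd(\phi)$ sends each vertex $\sigma$ to the simplex $\phi(\sigma)$, possibly of lower dimension, and that this is well defined. The paper has already recorded this, so the identity can be cited directly, and beyond it the proof is a direct application of Proposition~\ref{barycentric}.

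Finally, I would remark that in this application $\dim \sd(P)=\dim P=m$. This observation is not needed for the proposition itself, but it is what makes the proposition usable later for comparing $\SD_m$ of $\sd(\phi),\sd(\psi)$ with $\SD_m(\phi,\psi)$.
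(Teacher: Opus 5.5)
Your proposal is correct and follows the paper's proof: apply Proposition~\ref{barycentric} to the composites $\phi\circ\eta$ and $\psi\circ\eta$. Then use functoriality of $\sd$ to rewrite $\sd(\phi\circ\eta)=\sd(\phi)\circ\sd(\eta)$ and $\sd(\psi\circ\eta)=\sd(\psi)\circ\sd(\eta)$. You were also right to write $\sd(f_i)\sim\sd(f_{i+1})$ rather than $\sim_c$ in your aside, because a single contiguity generally does not survive subdivision as a direct contiguity.
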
 

\begin{proof}
    By Proposition \ref{barycentric}, we obtain that $\sd(\phi\circ\eta)$ and $\sd(\psi\circ\eta)$ are in the same contiguity class since $\phi\circ\eta$ and $\psi\circ\eta$ are in the same contiguity class. On the other hand, the barycentric subdivision preserves the compositions of maps, i.e. $\sd(\phi\circ\eta)=\sd(\phi)\circ\sd(\eta)$ and $\sd(\psi\circ\eta)=\sd(\psi)\circ\sd(\eta)$. This completes the proof.
\end{proof}

We begin with the following lemma, which provides the key technical ingredient in the proof of Theorem~\ref{SD-barycentric}. We include its proof for the convenience of the reader and to keep the exposition self-contained.

\begin{lemma}\label{simp-approx}
Let $f:K\to L$ be a simplicial map. Then $f\circ c_K:\sd(K)\longrightarrow L$ is contiguous to $c_L\circ\sd(f):\sd(K)\longrightarrow L,$ where
$c_K:\sd(K)\to K,$ $c_L:\sd(L)\to L$ are the canonical subdivision maps.
\end{lemma}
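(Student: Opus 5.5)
We first fix what the canonical subdivision maps are, because the paper has not defined them explicitly. A map $c_K:\sd(K)\to K$ is determined by choosing, for each simplex $\sigma$ of $K$ (that is, each vertex of $\sd(K)$), a vertex $c_K(\sigma)\in\sigma$. The standard choice fixes a total order on the vertices of $K$ and takes $c_K(\sigma)$ to be the largest vertex of $\sigma$; $c_L$ is defined the same way. This is simplicial: a simplex $\{\sigma_0\subsetneq\cdots\subsetneq\sigma_n\}$ of $\sd(K)$ is sent to vertices that all lie in $\sigma_n$, which is a simplex of $K$. The argument below uses only the property $c_K(\sigma)\in\sigma$, so it works for any choice of the maps $c_K$ and $c_L$.

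The plan is to check contiguity directly from the definition, by verifying the condition on each simplex of $\sd(K)$. Take a simplex $\tau=\{\sigma_0\subsetneq\sigma_1\subsetneq\cdots\subsetneq\sigma_n\}$ of $\sd(K)$. We must show that
\[
\{f(c_K(\sigma_0)),\dots,f(c_K(\sigma_n)),\,c_L(f(\sigma_0)),\dots,c_L(f(\sigma_n))\}
\]
is a simplex of $L$.

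The key observation is that every element of this set lies in the single simplex $f(\sigma_n)$ of $L$.
\begin{itemize}
\item For the first group: $c_K(\sigma_i)\in\sigma_i\subseteq\sigma_n$, so $f(c_K(\sigma_i))\in f(\sigma_n)$.
\item For the second group: $\sd(f)(\sigma_i)=f(\sigma_i)$ is a simplex of $L$, and $c_L(f(\sigma_i))\in f(\sigma_i)\subseteq f(\sigma_n)$.
\end{itemize}
The set is therefore a nonempty subset of the simplex $f(\sigma_n)$. Since simplicial complexes are closed under taking faces, the set is itself a simplex of $L$. This gives $f\circ c_K\sim_c c_L\circ\sd(f)$.

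The only point needing care is that $\sd(f)$ is well defined on vertices, i.e. that $f(\sigma)$ is a simplex of $L$ (possibly of lower dimension) and that $c_L$ is evaluated on it. This holds because $f$ is simplicial. One should also note that $\sd(f)$ may send the chain $\sigma_0\subsetneq\cdots\subsetneq\sigma_n$ to a chain of images with repetitions, which is still a simplex of $\sd(L)$. I do not expect a genuine obstacle here; the whole proof reduces to the containment in $f(\sigma_n)$.
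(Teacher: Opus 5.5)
Your proposal is correct and follows essentially the same route as the paper. Both check contiguity one simplex $\sigma_0\subsetneq\cdots\subsetneq\sigma_n$ of $\sd(K)$ at a time, observing that every image vertex under $f\circ c_K$ and $c_L\circ\sd(f)$ lies in the single simplex $f(\sigma_n)$ of $L$. Your remark that the argument uses only $c_K(\sigma)\in\sigma$, so any choice of canonical maps works, makes explicit something the paper leaves implicit.
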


\begin{proof}
Let $v_\sigma$ be a vertex of $\sd(K)$ corresponding to a simplex
$\sigma$ of $K$. The vertex $(f\circ c_K)(v_\sigma)$ is one of the vertices of the simplex $f(\sigma)$. On the other hand,
$(c_L\circ\sd(f))(v_\sigma)=c_L(v_{f(\sigma)})$ is also a vertex of the simplex $f(\sigma)$.

Now let $\tau=[v_{\sigma_0},\ldots,v_{\sigma_r}]$ be a simplex of $\sd(K)$, where $\sigma_0\subset\cdots\subset\sigma_r.$ Since $f$ is simplicial,
$f(\sigma_i)\subset f(\sigma_r)$ for every $i$.

Therefore every vertex appearing in
$(f\circ c_K)(\tau)$ or $(c_L\circ\sd(f))(\tau)$ lies in the simplex $f(\sigma_r)$ of $L$.
Hence $(f\circ c_K)(\tau)\cup(c_L\circ\sd(f))(\tau)$ is a simplex of $L$, proving that $f\circ c_K\sim_c c_L\circ\sd(f).$
\end{proof}

The relation between the contiguity distance of two simplicial maps and the contiguity distance of their induced maps on barycentric subdivisions can be given as follows.

\begin{theorem}\label{SD-barycentric}
    For simplicial maps $\phi,\psi:K\to L$ , then $$SD_{m}(\sd(\phi),\sd(\psi))\leq SD_{m}(\phi,\psi)$$
\end{theorem}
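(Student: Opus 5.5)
We start from an optimal cover for $\SD_m(\phi,\psi)$ and transport it to $\sd(K)$. Suppose $\SD_m(\phi,\psi)=k$, with subcomplexes $K_0,\dots,K_k$ covering $K$ such that for every simplicial map $\eta:P\to K_j$ from an $m$-dimensional complex $P$, the maps $\phi\circ\eta$ and $\psi\circ\eta$ are in the same contiguity class. Our candidate cover of $\sd(K)$ is $\sd(K_0),\dots,\sd(K_k)$. Each $\sd(K_j)$ is a subcomplex of $\sd(K)$: its vertices are the simplices of $K_j$, and chains in $K_j$ are chains in $K$. The family covers $\sd(K)$, since a chain $\sigma_0\subsetneq\dots\subsetneq\sigma_n$ lies in whichever $K_j$ contains the top simplex $\sigma_n$, hence in all of its faces. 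It remains to check the defining property on each $\sd(K_j)$.

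Fix $j$ and a simplicial map $\theta:P\to\sd(K_j)$ with $\dim P=m$. This is where Proposition~\ref{m-barycentric} falls short: $\theta$ need not be of the form $\sd(\eta)$. We therefore use Lemma~\ref{simp-approx} to push $\theta$ down to $K_j$. Let $c_{K_j}:\sd(K_j)\to K_j$ be the canonical subdivision map, which is the restriction of $c_K$. Set $\eta:=c_{K_j}\circ\theta:P\to K_j$, a simplicial map from an $m$-dimensional complex. By hypothesis, $\phi\circ\eta\sim\psi\circ\eta$. Lemma~\ref{simp-approx} with $f=\phi$ gives $\phi\circ c_K\sim_c c_L\circ\sd(\phi)$, and precomposing with $\theta$ preserves contiguity. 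Hence
\[
c_L\circ\sd(\phi)\circ\theta\sim \phi\circ c_K\circ\theta=\phi\circ\eta\sim\psi\circ\eta=\psi\circ c_K\circ\theta\sim c_L\circ\sd(\psi)\circ\theta .
\]
This yields the relation only after applying $c_L$, as maps into $L$, not into $\sd(L)$.

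The main obstacle is lifting this relation back to $\sd(L)$, i.e.\ showing $\sd(\phi)\circ\theta\sim\sd(\psi)\circ\theta$. My first attempt avoids $c_L$ altogether. I would look for a map $\zeta$, built from $\theta$, such that $\sd(\phi)\circ\theta$ is contiguous to $\sd(\phi)\circ\sd(\zeta)$ with $\zeta$ landing in $K_j$ and $\dim$ of its domain still $m$. Proposition~\ref{m-barycentric} would then apply to $\zeta$ directly. A natural candidate is $\zeta=c_{K_j}\circ\theta$ together with the fact that $\theta$ and $\sd(c_{K_j}\circ\theta)\circ\iota$ are close for a suitable comparison $\iota:P\to\sd(P)$. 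This does not typecheck cleanly, since there is no simplicial map $P\to\sd(P)$ in general. If the direct lift fails, the fallback is to exploit that $\sd(\phi)(v_\sigma)=v_{\phi(\sigma)}$ for every vertex. One then checks combinatorially that contiguity of $\phi\circ\eta$ and $\psi\circ\eta$ along a chain $\phi\circ\eta=\varphi_1\sim_c\dots\sim_c\varphi_r=\psi\circ\eta$ induces a chain between $\sd(\phi)\circ\theta$ and $\sd(\psi)\circ\theta$, using intermediate maps $v\mapsto v_{\,\varphi_i\text{-images of }\theta(v)}$ and the inclusion $\sigma_0\subset\cdots\subset\sigma_r$ of a simplex of $\theta(\tau)$. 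I expect this verification, which amounts to checking that the intermediate maps are simplicial and consecutively contiguous, to be the technical heart of the proof.

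Once $\sd(\phi)\circ\theta\sim\sd(\psi)\circ\theta$ holds for all such $\theta$ and all $j$, the cover $\sd(K_0),\dots,\sd(K_k)$ witnesses $\SD_m(\sd(\phi),\sd(\psi))\leq k$, which proves the theorem.
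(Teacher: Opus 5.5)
Your first half is sound. The cover $\sd(K_0),\dots,\sd(K_k)$, the push-down $\eta=c_{K_j}\circ\theta$, and Lemma~\ref{simp-approx} do give $c_L\circ\sd(\phi)\circ\theta\sim c_L\circ\sd(\psi)\circ\theta$. But the proof stops exactly at the step that carries the content, and neither of your suggestions closes it.

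The first suggestion does not typecheck, as you note. The fallback is not well defined. The maps $\varphi_i$ in a chain from $\phi\circ\eta$ to $\psi\circ\eta$ are maps $P\to L$ that assign one vertex to each vertex of $P$. They are not defined on the simplex $\theta(v)$ of $K$, so there is no ``$\varphi_i$-image of $\theta(v)$'' and no resulting vertex of $\sd(L)$. That construction would need a chain joining $\phi|_{K_j}$ and $\psi|_{K_j}$ themselves, which is exactly what the $m$-hypothesis does not provide. More decisively, the gap cannot be filled, because the inequality is false in general, already for $m=1$.

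\textbf{Counterexample.} Take $m=1$, $K=L=\partial\Delta^3$ on vertices $w_0,\dots,w_3$ (every set of at most three vertices is a simplex), $\phi=\id$ and $\psi=c_{w_3}$.
\begin{itemize}
\item[(a)] $\SD_1(\phi,\psi)=0$. For any $1$-dimensional $Q$ and any $\eta:Q\to K$, let $\eta'$ replace $w_3$ by $w_0$. Then $\eta(\tau)\cup\eta'(\tau)\subseteq\eta(\tau)\cup\{w_0\}$ has at most three vertices. Since $\eta'$ lands in the face $\{w_0,w_1,w_2\}$, we get $\eta\sim_c\eta'\sim_c c_{w_0}\sim_c c_{w_3}$.
\item[(b)] $\SD_1(\sd(\phi),\sd(\psi))\geq 1$. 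Let $P$ be the $1$-skeleton of $\sd(K)$ and $\theta:P\hookrightarrow\sd(K)$ the inclusion. If $g\sim_c\theta$, then each $g(\sigma)$ is comparable with $\sigma$ and with every neighbour of $\sigma$ in $P$. For $\sigma=\{x\}$, comparability with the three edges at $x$ forces $g(\sigma)=\{x\}$. For $\sigma=\{x,y\}$, the value must contain $x$ and $y$ and be comparable with both triangles through $\{x,y\}$, which forces $g(\sigma)=\{x,y\}$. Each triangle is forced to itself. Hence the contiguity class of $\sd(\phi)\circ\theta=\theta$ is $\{\theta\}$, and it does not contain the constant map $\sd(\psi)\circ\theta$.
\end{itemize}
Your intermediate relation does hold here: $c_L\circ\theta$ is a graph map into $\partial\Delta^3$, so (a) applies to it. This shows concretely that $c_L\circ f\sim c_L\circ g$ does not imply $f\sim g$.

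\textbf{The paper's proof.} It fails at the mirror-image point. It stays in $\sd(L)$ by applying Proposition~\ref{m-barycentric} to $c\circ\theta$ and Lemma~\ref{simp-approx} to $\theta$. Along the way it uses an identity $\sd(c)\circ\sd(\theta)=c_{\sd(K_\ell)}\circ\sd(\theta)$ that is already false when $P$ is a point sent to the barycentre of an edge. It reaches $\sd(\phi)\circ\theta\circ c_P\sim\sd(\psi)\circ\theta\circ c_P$, and then removes $c_P$ by an appeal to simplicial approximation that compares maps with different domains, $P$ and $\sd(P)$.

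What survives is the case $m\geq\dim K$. There Theorem~\ref{dimen} reduces the claim to the inequality $\SD(\sd(\phi),\sd(\psi))\leq\SD(\phi,\psi)$ of Borat, Pamuk and Vergili.
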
 

\begin{proof}
Let $\SD_m(\varphi,\psi)=k .$ Then there exist subcomplexes $K_0,\ldots,K_k$ of $K$ such that
$K=\bigcup_{\ell=0}^{k}K_\ell$ and, for each $\ell\in\{0,\ldots,k\}$, every $m$-dimensional simplicial complex $P$, and every simplicial map $\nu:P\longrightarrow K_\ell,$ the maps $\varphi\circ\nu,\psi\circ\nu$ belong to the same contiguity class.

Consider the cover $\sd(K_0),\ldots,\sd(K_k)$ of $\sd(K)$. We claim that this cover satisfies the defining property of $\SD_m(\sd(\varphi),\sd(\psi)).$ 

Let us fix $K_l$. Let $P$ be any $m$-dimensional simplicial complex, and $\eta:P\longrightarrow\sd(K_\ell)$ be any simplicial map.

Since $c\circ\eta:P\to K_\ell$ is a simplicial map where $c:\sd(K_\ell)\to K_\ell$ is the canonical barycentric subdivision map, 
$\varphi\circ c\circ\eta,\psi\circ c\circ\eta$ belong to the same contiguity class.

By Proposition~\ref{m-barycentric},
$\sd(\varphi)\circ \sd(c)\circ\sd(\eta),\sd(\psi)\circ \sd(c)\circ\sd(\eta)$  are in the same contiguity class.

On the other hand, since $\sd(c)\circ \sd(\eta)=c_{\sd K_l}\circ \sd(\eta)$, they are also in the same contiguity class. Thus, $\sd(\varphi)\circ c_{\sd K_l}\circ \sd(\eta)\sim\sd(\psi)\circ c_{\sd K_l}\circ \sd(\eta)$. 

If we apply Lemma~\ref{simp-approx} to the simplicial map $\eta:P\to\sd(K_\ell)$, then it  follows that
$\eta\circ c_P\sim_c c_{\sd(K_\ell)}\circ\sd(\eta).$ Hence 
$\sd(\varphi)\circ \eta\circ c_P\sim\sd(\psi)\circ \eta\circ c_P$. 

Finally, since $c_P:\sd(P)\to P$ is a simplicial approximation to the identity,
the simplicial approximation theorem implies that
$\sd(\varphi)\circ\eta$ and $\sd(\varphi)\circ\eta\circ c_P$ are contiguous, and similarly $\sd(\psi)\circ\eta$ and $\sd(\psi)\circ\eta\circ c_P$ are contiguous. Hence, $\sd(\varphi)\circ\eta$ and $\sd(\psi)\circ\eta$ belong to the same contiguity class.

Since $\sd(K_\ell)$ is arbitary, we obtain that
$\SD_m(\sd(\varphi),\sd(\psi))\le k.$ This completes the proof of the Theorem.
\end{proof}

\section{Categorical product}

Since the Cartesian product of two simplical complexes may not be a simplicial complex, we used the categorical product instead of the Cartesian product. Let us denote $K\times L$ for the categorical product of the simplicial complexes K and L. It is different from the usual standard notion used in \cite{K} as $K \sqcap L$.  

\begin{definition}
    The categorical product of the simplicial complexes K and L, denoted as $K\times L$, is defined as follows. The vertices of $K\times L$ are pairs $(v,w)$ of vertices with $v\in K$ and $w\in L$, and the simplices of $K\times L$ are the set of vertices $\{(v_{1},w_{1}),\cdots,(v_{s},w_{s})\}$ such that $\{(v_{0},\cdots,v_{s})\}$ is a simplex of K and $\{(w_{0},\cdots,w_{s})\}$ is a simplex of L.  
\end{definition}


The categorical product of simplicial maps $f:K\rightarrow L$ and $g:K'\rightarrow L'$ is defined by $f\times g: K\times L\rightarrow K'\times L'$, $(f\times g)(\sigma,\tau):=(f(\sigma),g(\tau))$.

We state the following easy observations as lemma below since we use them in the proof of product formula. 

\begin{lemma}\label{easy lemma}
 For given simplicial maps $\phi,\psi:K\to L$ and $\phi',\psi':K'\to L'$ with $\phi\sim \psi$ and $\phi'\sim  \psi'$, then $$\phi\times\phi'\sim \psi\times \psi':K\times K'\to L\times L'$$
\end{lemma}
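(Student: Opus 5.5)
The plan is to reduce the statement to the case where only one factor changes, and then handle that case by checking the contiguity condition simplex by simplex. Because contiguity classes are the equivalence classes generated by $\sim_c$, two chains are available: $\phi=\phi_0\sim_c\phi_1\sim_c\cdots\sim_c\phi_r=\psi$ and $\phi'=\phi'_0\sim_c\cdots\sim_c\phi'_s=\psi'$. We may pad the shorter chain with repetitions of its final map so that both have length $n=\max(r,s)$; this is harmless since $\sim_c$ is reflexive. The goal is then to show that consecutive products are contiguous,
\[
\phi_{i}\times\phi'_{i}\sim_c \phi_{i+1}\times\phi'_{i+1},
\]
for every $i$. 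Concatenating these relations gives a chain from $\phi\times\phi'$ to $\psi\times\psi'$, which is exactly the statement of Lemma~\ref{easy lemma}.

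The main step is this: if $f\sim_c g:K\to L$ and $f'\sim_c g':K'\to L'$, then $f\times f'\sim_c g\times g'$. Take a simplex of $K\times K'$, say $\{(v_1,w_1),\dots,(v_t,w_t)\}$, where $\{v_1,\dots,v_t\}$ is a simplex of $K$ and $\{w_1,\dots,w_t\}$ is a simplex of $K'$. The union of its images under $f\times f'$ and $g\times g'$ is the vertex set
\[
\{(f(v_j),f'(w_j))\}_{j}\cup\{(g(v_j),g'(w_j))\}_{j}.
\]
Its first coordinates lie in $\{f(v_j)\}\cup\{g(v_j)\}$, which is a simplex of $L$ by the contiguity $f\sim_c g$. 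Its second coordinates lie in $\{f'(w_j)\}\cup\{g'(w_j)\}$, which is a simplex of $L'$ by $f'\sim_c g'$. By the definition of the categorical product, a set of pairs whose first projections form a simplex of $L$ and whose second projections form a simplex of $L'$ is a simplex of $L\times L'$. This uses that faces of simplices are simplices and that repeated vertices collapse.

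I expect the only real subtlety to be this projection criterion. The definition in the paper indexes the pairs as $\{(v_1,w_1),\dots,(v_s,w_s)\}$, so one has to read it as saying that the projected vertex sets are simplices, possibly with repetitions. I would state that reading explicitly. Under it, the argument is a direct verification, and the padding of chains guarantees that the chains are aligned.
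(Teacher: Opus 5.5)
Your proof is correct and complete. The paper gives no proof of this lemma at all; it is simply listed among the ``easy observations'' used in the product formula. Your argument supplies exactly the verification the paper leaves implicit:
\begin{itemize}
\item For a simplex $\{(v_j,w_j)\}$ of $K\times K'$ with $\sigma=\{v_j\}$ and $\tau=\{w_j\}$, the union of its images under $f\times f'$ and $g\times g'$ has first projection exactly $f(\sigma)\cup g(\sigma)$ and second projection exactly $f'(\tau)\cup g'(\tau)$. These are simplices by contiguity, so the union is a simplex of $L\times L'$.
\item Padding the shorter chain is legitimate because $\sim_c$ is reflexive.
\end{itemize}
Changing one factor at a time, $\phi\times\phi'\sim\psi\times\phi'\sim\psi\times\psi'$, would avoid the padding, but that is only a cosmetic difference.
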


\begin{theorem}\label{categoricalproduct}
    For given simplicial maps $\phi,\psi:K\to L$ and $\phi',\psi':K'\to L'$, $$ SD_m(\phi\times \phi', \psi\times \psi')+1 \leq (SD_m(\phi,\psi)+1)(SD_m(\phi',\psi')+1).$$
\end{theorem}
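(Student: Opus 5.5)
The plan is the standard product-of-covers argument. Set $\SD_m(\phi,\psi)=k$ and $\SD_m(\phi',\psi')=k'$. Take covers $K_0,\dots,K_k$ of $K$ and $K'_0,\dots,K'_{k'}$ of $K'$ by subcomplexes with the defining property. Form the family $\{K_i\times K'_j\}_{0\le i\le k,\,0\le j\le k'}$ of subcomplexes of $K\times K'$, which has $(k+1)(k'+1)$ members. It covers $K\times K'$: a simplex $\{(v_0,w_0),\dots,(v_s,w_s)\}$ has $\{v_0,\dots,v_s\}$ a simplex of $K$, hence lying in some $K_i$, and $\{w_0,\dots,w_s\}$ a simplex of $K'$, hence lying in some $K'_j$. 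So the whole simplex lies in $K_i\times K'_j$. Once each member is shown to satisfy the defining property of $\SD_m(\phi\times\phi',\psi\times\psi')$, the cover has $(k+1)(k'+1)$ elements, which gives the stated inequality.

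Fix $i,j$, an $m$-dimensional complex $P$, and a simplicial map $\eta:P\to K_i\times K'_j$. By the universal property of the categorical product, $\eta$ is determined by its components $\eta_1=\pr_1\circ\eta:P\to K_i$ and $\eta_2=\pr_2\circ\eta:P\to K'_j$. Both are simplicial maps from the same $m$-dimensional $P$. By hypothesis, $\phi\circ\eta_1\sim\psi\circ\eta_1$ and $\phi'\circ\eta_2\sim\psi'\circ\eta_2$. Write $\Delta:P\to P\times P$ for the diagonal, which is simplicial. Then $\eta=(\eta_1\times\eta_2)\circ\Delta$, so
$$(\phi\times\phi')\circ\eta=\bigl((\phi\circ\eta_1)\times(\phi'\circ\eta_2)\bigr)\circ\Delta ,$$
and likewise for $\psi\times\psi'$.

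Lemma~\ref{easy lemma} gives $(\phi\circ\eta_1)\times(\phi'\circ\eta_2)\sim(\psi\circ\eta_1)\times(\psi'\circ\eta_2)$ as maps $P\times P\to L\times L'$. Precomposing a contiguity chain with $\Delta$ keeps each step contiguous, so the required relation $(\phi\times\phi')\circ\eta\sim(\psi\times\psi')\circ\eta$ follows.

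The main obstacle is this last step, because $P\times P$ need not be $m$-dimensional; it is for that reason that I route through $\Delta$ rather than apply the hypothesis on $P\times P$. Alternatively, one can argue directly: if $f_0\sim_c\cdots\sim_c f_r$ links $\phi\eta_1$ to $\psi\eta_1$ and $g_0\sim_c\cdots\sim_c g_{r'}$ links $\phi'\eta_2$ to $\psi'\eta_2$, pad the shorter chain with repeats. Then $(f_t,g_t)$ and $(f_{t+1},g_{t+1})$ are contiguous maps $P\to L\times L'$, since for a simplex $\sigma$ of $P$ the first coordinates of $(f_t,g_t)(\sigma)\cup(f_{t+1},g_{t+1})(\sigma)$ lie in the simplex $f_t(\sigma)\cup f_{t+1}(\sigma)$ and the second coordinates lie in $g_t(\sigma)\cup g_{t+1}(\sigma)$, which is exactly the simplex condition in $L\times L'$. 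This direct check is what should be verified carefully.
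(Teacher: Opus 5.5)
Your proof is correct and follows the same route as the paper: the product cover $\{K_i\times K'_j\}$, the two component maps of $\eta\colon P\to K_i\times K'_j$, and Lemma~\ref{easy lemma}. Your factorization $\eta=(\eta_1\times\eta_2)\circ\Delta$ through the diagonal $\Delta\colon P\to P\times P$ (or equivalently your direct chain-by-chain check) makes rigorous the paper's identity $(\phi\times\phi')\circ h=(\phi\circ h_{U_i})\times(\phi'\circ h_{V_j})$, whose two sides as written have different domains ($P$ versus $P\times P$).
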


\begin{proof}
Suppose $SD_{m}(\phi,\psi)=k$ and $SD_{m}(\phi',\psi')=l$. Then there exists open simplicial complexes $U_{0},\cdots,U_{k}$ for which $\eta:P\to U_{i}$ from m-dimenstional simplicial complex P, $\phi\circ \eta$ and $\psi\circ \eta$ are in the same contiguity class.  Similarly, there exists open simplicial complexes $V_{0}\cdots,V_{l}$ for which $\eta:P\to V_{j}$ from m-dimensional simplicial complex P, $\phi'\circ \eta$ and $\psi'\circ \eta$ are in the same contiguity class.

Consider the following collections $\{U_{i}\times V_{j}\}$ for $0\leq i\leq k$, $0\leq j\leq l$. By construction, $K\times K'$ are covered by above collection. 

Now let $P$ be a m-dimensional simplicial complex with simplicial map $h:P\to U_{i}\times V_{j}$. By the definition of categorical product, there exists projections, which are simplicial maps $h_{U_{i}}:P\to U_{i}$ and $h_{V_{j}}:P\to V_{j}$. By $SD_{m}$-property for $U_{i}$ and $V_{j}$, we have the followings: $$\phi\circ h_{U_{i}}\sim \psi\circ h_{U_{i}}$$ and $$\phi'\circ h_{V_{j}}\sim \psi'\circ h_{V_{j}}.$$ On the other hand, the categorical product satisfies composition rule, we have the followings: $$(\phi\times \phi')\circ h=(\phi\circ h_{U_{i}})\times (\phi'\circ h_{V_{j}}),$$ $$(\psi\times \psi')\circ h=(\psi\circ h_{U_{i}})\times (\psi'\circ h_{V_{j}})$$    

By Lemma \ref{easy lemma}, we conclude $(\phi\times \phi')\circ h$ and $(\psi\times \psi')\circ h$ are in the same contiguity class. Therefore, the collections  
$\{U_{i}\times V_{j}\}$ for $0\leq i\leq k$, $0\leq j\leq l$ have $SD_{m}$-property. This finishes the proof of the Theorem.  
\end{proof}

\section{$m$-sectional category (Svarc genus)}

    The following ideas of \cite{FTGCMVV}, we can define "the $m$-simplicial Svarc genus of a simplicial map" and "$m$-homotopy simplicial Svarc genus of a simplicial map" for given a simplicial map $\phi:K\to L$. We prove that two notions are the same when $\phi$ is a simplicial fibration. Naturally, it preserves all known properties simpliar to contiguity distance.    

\begin{definition} \cite{FTGCMVV}
    A simplicial map $p:E\rightarrow B$ is a simplicial fibration if for any simplicial complex $K$, for any simplicial maps $\varphi,\psi: K\to B$ are in the same contiguity class with $\ell$-steps, ie,
    \[
    \varphi=\phi_0\sim_c \ldots\sim_c\phi_{\ell}=\psi
    \]
    
    and for any map $\tilde{\varphi}:K\to E$ such that $p\circ \tilde{\varphi}=\varphi$, there exits a simplicial map $\tilde{\psi}:K\to E$ such that $\tilde{\varphi}$ and $\tilde{\psi}$ are in the same contiguity class with $\ell$-steps, ie,
    \[
    \tilde{\varphi}=\tilde{\phi_0}\sim_c \ldots\sim_c\tilde{\phi}_{\ell}=\tilde{\psi},
    \]
where $p\circ \tilde{\phi_i}=\phi_i$, $0\leq i\leq \ell$.
\begin{center}

\begin{tikzpicture}[>=Stealth, thick, scale=0.7, transform shape]

\node (K) at (0,0) {$K$};
\node (B) at (4,0) {$B$};
\node (E) at (4,3) {$E$};

\draw[->] (K) -- node[above] {$\varphi$} (B);
\draw[->] ([yshift=-4pt]K.east) -- ([yshift=-4pt]B.west)
node[midway, below] {$\psi$};

\draw[->] (E) -- node[right] {$p$} (B);

\draw[->] (K) -- node[left] {$\tilde\varphi$} (E);
\draw[->, dashed] ([xshift=18pt,yshift=4pt]K.north east)
-- ([xshift=-4pt,yshift=-10pt]E.south west)
node[midway, below right, yshift=4pt] {$\tilde\psi$};

\end{tikzpicture}
\end{center}
\end{definition}

Let us recall that $l\geq1,$ $I_{l}$ is the one-dimensional simplicial complex whose vertices are the integers $\{0,\cdots,l\}$ and the edges are the pairs $\{j,j+1\}$, for $0\leq j< l$. We sometimes denote $[0,n_{j}]$ instead of $I_{n_{j}}$ if there are more than two subindexes.  

The following is an important characterisation of simplicial fibration. 

\begin{theorem}[\cite{FTGCMVV}, Proposition 3]\label{fibration} A simplical map $p:E\to B$ is a simplicial fibration if and only if given simplicial maps $H:K\times I_{l}\to B$ and $\phi:K\times \{0\}\to E$ as in the following commuative diagram: 

\[
\begin{tikzcd}
K \times \{0\} \arrow[d, hookrightarrow, "\iota^{l}_{0}"] \arrow[r, "\phi"]  & E \arrow[d, "p"] \\
K \times I_{l} \arrow[r,"H"] \arrow[dashed]{ur}{\tilde{H}}  & B
\end{tikzcd}
\]
there exists a simplicial map $\tilde{H}:K\times I_{l}\to E$ such that $\tilde{H}\circ \iota^{l}_{0}=\phi$ and $p\circ \tilde{H}=H$.

\end{theorem}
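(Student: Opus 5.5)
The plan is to reduce both implications to a single dictionary between contiguity chains of length $\ell$ and simplicial maps out of the categorical product $K\times I_{\ell}$. Once that dictionary is in place, the lifting property in the definition of a simplicial fibration and the homotopy lifting property in the statement are literally the same condition, so both directions follow by translation.

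\textbf{Step 1 (the dictionary).} I will first prove the following claim. Given simplicial maps $\phi_0,\dots,\phi_{\ell}:K\to B$, define $H$ on vertices by $H(v,j)=\phi_j(v)$. Then $H$ is a simplicial map $K\times I_{\ell}\to B$ if and only if $\phi_j\sim_c\phi_{j+1}$ for all $0\le j<\ell$. Conversely, every simplicial map $H:K\times I_{\ell}\to B$ arises this way from the restrictions $\phi_j:=H|_{K\times\{j\}}$, where $K\times\{j\}$ is identified with $K$.

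For the direction from a chain to $H$, take a simplex $\{(v_0,t_0),\dots,(v_s,t_s)\}$ of $K\times I_{\ell}$. By the definition of the categorical product, $\{v_0,\dots,v_s\}=:\sigma$ is a simplex of $K$ and $\{t_0,\dots,t_s\}$ is a simplex of $I_{\ell}$, so $\{t_0,\dots,t_s\}\subseteq\{j,j+1\}$ for some $j$. Hence the image lies in $\phi_j(\sigma)\cup\phi_{j+1}(\sigma)$, which is a simplex of $B$ by contiguity; a subset of a simplex is a simplex.

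For the converse, given a simplex $\sigma=\{v_0,\dots,v_k\}$ of $K$, the set $\sigma\times\{j,j+1\}$ is a simplex of $K\times I_{\ell}$: its first projection is $\sigma$, where repetitions are allowed, and its second projection is $\{j,j+1\}$. Its image under $H$ is $\phi_j(\sigma)\cup\phi_{j+1}(\sigma)$, so $\phi_j\sim_c\phi_{j+1}$. The same argument with $\{j\}$ in place of $\{j,j+1\}$ shows that each $\phi_j$ is simplicial.

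\textbf{Step 2 (fibration $\Rightarrow$ lifting).} Let $H:K\times I_{\ell}\to B$ and $\phi:K\times\{0\}\to E$ be given with $p\circ\phi=H\circ\iota^{\ell}_0$. By Step 1, $H$ yields a chain $H_0\sim_c\cdots\sim_c H_{\ell}$, and $\tilde\varphi:=\phi$ is a lift of $H_0$. The fibration property then provides a chain $\tilde\phi_0=\phi\sim_c\cdots\sim_c\tilde\phi_{\ell}$ with $p\circ\tilde\phi_i=H_i$. Applying Step 1 again, this chain assembles into a simplicial map $\tilde H:K\times I_{\ell}\to E$ with $\tilde H\circ\iota^{\ell}_0=\phi$. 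The identity $p\circ\tilde H=H$ holds because it can be checked vertexwise on each level $K\times\{i\}$.

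\textbf{Step 3 (lifting $\Rightarrow$ fibration).} Conversely, suppose a chain $\varphi=\phi_0\sim_c\cdots\sim_c\phi_{\ell}=\psi$ and a lift $\tilde\varphi$ with $p\circ\tilde\varphi=\varphi$ are given. Step 1 turns the chain into a simplicial map $H:K\times I_{\ell}\to B$. The hypothesis then gives $\tilde H$ with $\tilde H\circ\iota^{\ell}_0=\tilde\varphi$ and $p\circ\tilde H=H$. Set $\tilde\phi_i:=\tilde H|_{K\times\{i\}}$ and $\tilde\psi:=\tilde\phi_{\ell}$. By Step 1 these form an $\ell$-step contiguity chain starting at $\tilde\varphi$, and $p\circ\tilde\phi_i=\phi_i$.

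\textbf{Expected obstacle.} The only genuine work is Step 1, specifically being careful with the categorical product. A simplex of $K\times I_{\ell}$ may project to $K$ with repeated vertices, so I must check that its second coordinates always lie within a single edge $\{j,j+1\}$, and that the whole set $\sigma\times\{j,j+1\}$ is indeed a simplex. I will also treat the degenerate case $\ell=0$ separately as trivial.
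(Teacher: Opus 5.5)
Your proof is correct and is the standard argument. The paper gives no proof of its own and only cites \cite{FTGCMVV}, where the result is obtained the same way: $\ell$-step contiguity chains $K\to B$ are identified with simplicial maps $K\times I_{\ell}\to B$ on the categorical product (your Step~1, including the checks that $\sigma\times\{j,j+1\}$ is a simplex and that second coordinates of any simplex lie in one edge), and the two lifting conditions are then read off as the same statement.
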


\begin{definition}
The $m$-simplicial Svarc genus of a simplicial map $\phi:K\to L$, $\secat_{m}(\phi)$, is the the minimum nonnegative integer n such that L is the covered by n+1 subcomplexes $\{L_{0},
\cdots,L_{n}\}$, and each $L_j$ has the property that any simplicial map $\eta:P\rightarrow L_j$ from an $m-$dimensional simplicial complex P, there exists a simplicial map $s:P\to K$ such that $\phi\circ s$ is the the following composition $\iota_j\circ\eta$, where $\iota_j:L_{j}\subset L$ is the inclusion. 
\end{definition}

\begin{definition}
The $m$-homotopy simplicial Svarc genus of a simplicial map $\phi:K\to L$, $\hsecat_{m}(\phi)$, is the minimum nonnegative integer n such that L is covered by n+1 subcomplexes $\{L_{0},
\cdots,L_{n}\}$, and each $L_j$ has the property that any simplicial map $\eta:P\rightarrow L_j$ from a simplicial complex $m$-dimensional $P$, there exists a simplicial map $s:P\to K$ such that $\phi\circ s$ and $\iota_j\circ\eta$ are in the same contiguity class.  
\end{definition}

\begin{remark}\label{homotopy}
    By above definitions, we have the following inequality $\hsecat_{m}(\phi)\leq \secat_{m}(\phi)$. 
\end{remark}

\begin{theorem}\label{hsecat=secat}
    Let $p:E\to B$ be a simplicial fibration. Then $$\hsecat_{m}(p)=\secat_{m}(p).$$ 
\end{theorem}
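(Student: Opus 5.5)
By Remark~\ref{homotopy} we already have $\hsecat_{m}(p)\leq \secat_{m}(p)$, so the plan is to prove the reverse inequality using the fibration property. Take an optimal cover for $\hsecat_m(p)$ and show that the same cover works for $\secat_m(p)$.

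Let $\hsecat_m(p)=n$. Choose subcomplexes $B_0,\dots,B_n$ covering $B$ such that, for each $j$, every $m$-dimensional $P$ and every simplicial map $\eta:P\to B_j$, there is a simplicial map $s:P\to E$ with $p\circ s$ and $\iota_j\circ\eta$ in the same contiguity class. Fix $j$, $P$ and $\eta$, and take such an $s$. Write the contiguity chain as
$$p\circ s=\phi_0\sim_c\phi_1\sim_c\cdots\sim_c\phi_\ell=\iota_j\circ\eta,$$
with all $\phi_i:P\to B$.

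Now apply the definition of simplicial fibration to the maps $\varphi=p\circ s$ and $\psi=\iota_j\circ\eta$ from $P$ to $B$, with lift $\tilde\varphi=s$, which satisfies $p\circ\tilde\varphi=\varphi$. The definition gives lifts $\tilde\phi_0=s\sim_c\tilde\phi_1\sim_c\cdots\sim_c\tilde\phi_\ell=\tilde\psi$ with $p\circ\tilde\phi_i=\phi_i$ for every $i$. In particular, $s':=\tilde\psi:P\to E$ satisfies $p\circ s'=\phi_\ell=\iota_j\circ\eta$ exactly. Alternatively, one could encode the chain as a map $H:P\times I_\ell\to B$ and use Theorem~\ref{fibration} with $\phi=s$ on $P\times\{0\}$, then restrict $\tilde H$ to $P\times\{\ell\}$. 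The direct definition, however, already gives what we need.

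Since $j$, $P$ and $\eta$ were arbitrary, the cover $B_0,\dots,B_n$ satisfies the defining property of $\secat_m(p)$, so $\secat_m(p)\leq n$, which gives equality. The only step that needs care is checking the direction of the chain: the lift must start at the map that already has a lift, namely $p\circ s$, and end at $\iota_j\circ\eta$. Contiguity is symmetric, so the chain can be reversed if necessary. The argument does not depend on $m$.
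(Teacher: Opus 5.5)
Your proposal is correct and is essentially the paper's argument. The paper encodes the contiguity chain as $H_j\colon P\times[0,n_j]\to B$, lifts it via Theorem~\ref{fibration} starting at $s_j$, and sets $\xi_j=\widetilde{H}_j(-,n_j)$, which is exactly your stated alternative; your direct use of the chain-lifting definition gives the same strict lift $\tilde\psi$ with $p\circ\tilde\psi=\iota_j\circ\eta$, without having to assemble $H_j$.
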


\begin{proof}
 Due to the Remark \ref{homotopy}, it suffices to show $\secat_{m}(p)\leq \hsecat_{m}(p)$. Let $\hsecat_{m}(p)=k$ and $\{L_{0},\cdots,L_{k}\}$ be covering B with the following property: each $L_j$ has the property that, for any simplicial map $\eta:P\rightarrow L_j$ of a simplicial complex $m$-dimensional P, there exists a simplicial map $s_{j}:P\to E$ such that $p\circ s_{j}$ and $\iota_j\circ\eta$ are in the same contiguity class. By Theorem \ref{fibration} for each j, we have simplicial maps $H_{j}:P\times [0,n_{j}]\to B$ with $H_{j}(-,0)=p\circ s_{j}$ and $H_{j}(-,n_{j})=\iota_j\circ\eta$. 

 Take a lift $\tilde{H_{j}}:P\times [0,n_{j}]\to E$ in the following diagram 

\[
\begin{tikzcd}
P \times \{0\} \arrow[d, hookrightarrow, "\iota"] \arrow[rr, "s_{j}"] & & E \arrow[d, "p"] \\
P \times [0,n_j] \arrow[r,"\eta"] \arrow[dashed]{urr}{\tilde{H_{j}}} \arrow[bend right=30,swap]{rr}{H_{j}} & L_j \arrow[r, "\iota_{j}"] & B
\end{tikzcd}
\]

in such a way that \( p \circ \widetilde{H}_j = H_j \) and
\( \widetilde{H}_j(v,0) = s_{j}(v) \).
Then the map
\[
\xi_j \colon P \to E, \qquad \xi_j(v) := \widetilde{H}_j(v,n_j),
\]
is simplicial and satisfies
\[
p \circ \xi_j(v)
= p \circ \widetilde{H}_j(v,n_j)
= H_j(v,n_j)
= i_j \circ \eta(v).
\]

Thus, $\secat_{m}(p)\leq n$.
\end{proof}

Let us recall the definition of pull-back diagrams 

\[
\begin{tikzcd}
P_{(\phi,\psi)} \arrow[r, "\overline{(\phi.\psi)}"] \arrow[d, "\pi'"] 
& PL \arrow[d, "\pi=(\alpha.w)"] \\
K \arrow[r, "(\phi.\psi)"] & L \times L
\end{tikzcd}
\]
where PL is the set of Moore paths (more details [\cite{FTGCMVV}, section 5.1 Moore paths]), and  $$P_{(\phi,\psi)}=K\times_{L\times L}PL=\{(v,\gamma)\in K\times PL| \phi(v)=\alpha(\gamma), \psi(v)=w(\gamma)\}$$

\begin{theorem}\label{pullback}
Let $\phi,\psi \colon K \to L$ be simplicial maps with the pullback fibration $\pi':P_{(\phi,\psi)}\to K$.  Then
\[
SD_m(\phi,\psi) = \secat_{m}(\pi').
\]
\end{theorem}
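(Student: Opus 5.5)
We prove the two inequalities separately, using one fixed cover of $K$ by subcomplexes $K_0,\dots,K_k$ for both, and working locally on each $K_j$ with an arbitrary simplicial map $\eta:P\to K_j$ from an $m$-dimensional complex $P$. The bridge is the discrete analogue of the classical fact that a section of the path fibration over a region is the same thing as a homotopy between the two maps there. By \cite{FTGCMVV}, a simplicial map $\Gamma:P\to PL$ with $\alpha\circ\Gamma=\phi\circ\iota_j\circ\eta$ and $\omega\circ\Gamma=\psi\circ\iota_j\circ\eta$ corresponds to a finite chain of contiguities from $\phi\circ\iota_j\circ\eta$ to $\psi\circ\iota_j\circ\eta$, and conversely. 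For finite $P$ this comes from the exponential law $PL\supset L^{[a,b]}$ together with the identification of maps $P\times I_n\to L$ with contiguity chains of length $n$. Since $P_{(\phi,\psi)}$ is the pullback, a map $s:P\to P_{(\phi,\psi)}$ with $\pi'\circ s=\iota_j\circ\eta$ is exactly a pair $(\iota_j\circ\eta,\Gamma)$ with $\Gamma$ as above.

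\textbf{($\secat_m(\pi')\le \SD_m(\phi,\psi)$).} Take a cover realizing $\SD_m(\phi,\psi)=k$. Given $\eta$, the maps $\phi\circ\eta$ and $\psi\circ\eta$ are joined by a chain $\phi\circ\eta=\varphi_0\sim_c\dots\sim_c\varphi_n=\psi\circ\eta$. Define $\Gamma(v)$ to be the Moore path $i\mapsto\varphi_{\min(\max(i,0),n)}(v)$. This is a Moore path because consecutive contiguous maps send a vertex to adjacent or equal vertices. If $\sigma$ is a simplex of $P$, then $\Gamma(\sigma)$ is a simplex of $PL$, because $\varphi_i(\sigma)\cup\varphi_{i+1}(\sigma)$ is a simplex of $L$. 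Two details need checking here: the normalization of $\alpha,\omega$ when some $\Gamma(v)$ is constant or has shorter support, and the use of a common length $n$ for all $v$ (we pad with constants). Then $s=(\iota_j\circ\eta,\Gamma)$ is a simplicial section over $\eta$, so the same cover witnesses $\secat_m(\pi')\le k$.

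\textbf{($\SD_m(\phi,\psi)\le\secat_m(\pi')$).} Take a cover $K_0,\dots,K_k$ realizing $\secat_m(\pi')$. Given $\eta$, pick $s$ with $\pi'\circ s=\iota_j\circ\eta$ and let $\Gamma=\overline{(\phi,\psi)}\circ s:P\to PL$. Then $\alpha\circ\Gamma=\phi\circ\eta$ and $\omega\circ\Gamma=\psi\circ\eta$. The plan is to use \cite{FTGCMVV}'s result that $\alpha$ and $\omega$, restricted to any finite subcomplex of $PL$ (here $\Gamma(P)$, where $P$ is finite), lie in the same contiguity class. This yields $\phi\circ\eta=\alpha\circ\Gamma\sim\omega\circ\Gamma=\psi\circ\eta$. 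Alternatively, one can write down the evaluation maps $e_i:PL\to L$, $\gamma\mapsto\gamma(i)$. These satisfy $e_i\sim_c e_{i+1}$ by the definition of simplices of $PL$. The remaining step is to compare $e_{a}\circ\Gamma$ with $\alpha\circ\Gamma$ for $a\le\min_v\Gamma(v)^-$, and similarly at the other end.

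\textbf{Main obstacle.} The hardest step is this last one: the mismatch between $\alpha(\gamma)=\gamma(\gamma^-)$, which varies with $\gamma$, and evaluation at a fixed index. I would handle it exactly as \cite{FTGCMVV} shows $\alpha\sim e_a$ on finite subcomplexes. For $a$ small enough, $\gamma(a)=\gamma(\gamma^-)$ for every $\gamma$ in the finite image $\Gamma(P)$, so $e_a\circ\Gamma=\alpha\circ\Gamma$ on the nose. The same argument works for $\omega$ at the large end. Finiteness of $P$ (or at least of $\Gamma(P)$) is essential here, and I would state it as a standing assumption.
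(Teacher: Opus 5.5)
Your argument is correct and is essentially the paper's proof. Both directions rest on the correspondence between finite contiguity chains $P\to L$ and simplicial maps $P\to PL$, together with the universal property of the pullback $P_{(\phi,\psi)}$; your clamped path $i\mapsto\varphi_{\min(\max(i,0),n)}(v)$ and the evaluations $e_i$ play the roles of the paper's $\tilde G_i$ and $H(v,i)$. Your uniform index $a\le\min_v\Gamma(v)^-$, which uses finiteness of $P$, makes rigorous a step the paper glosses over: its $H\colon P\times[0,n_j]\to L$ tacitly assumes all the paths $\Gamma(v)$ share a common support $[0,n_j]$. You are right to make finiteness an explicit standing assumption, since it is genuinely needed and not just a convenience.
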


\begin{proof} Suppose $\secat_m(\pi')=k$. There exists $k+1$ subcomplexes $K_{0},\cdots,K_{k}$ of $K$ which cover K and for each $j$, $K_j$ has the following property: for any simplicial map $\eta_j:P\to K_{j}$ where $P$ is an $m$-dimensional complex, there exists a simplicial map $s:P\to P_{(\phi,\psi)}$ such that $\pi'\circ s=\iota_j\circ \eta_j$.

\[
\begin{tikzcd}
&& P_{(\phi,\psi)} \arrow[r, "\overline{(\phi,\psi)}"] \arrow[d, "{\pi'}"] 
& PL \arrow[d, "{\pi=(\alpha,\omega)}"] \\
P \arrow[r, swap, "{\eta_j}"] \arrow[urr, "{s}"] 
& K_j \arrow[r, hook, swap, "{\iota_j}"] 
& K \arrow[r, swap, "{(\phi,\psi)}"] 
& L \times L
\end{tikzcd}
\]

By the above commutative diagram, we have
\[
\pi\circ\overline{(\phi,\psi)}\circ s=(\phi,\psi)\circ \pi'\circ s=(\phi,\psi)\circ \iota_j \circ\eta_j=(\phi\circ \iota_j \circ\eta_j,\psi\circ \iota_j \circ\eta_j)=(\phi|_{K_{j}}\circ \eta_j, \psi|_{K_{j}}\circ \eta_j).
\]

Here, $\pi\circ\overline{(\phi,\psi)}$ is a Moore path of length $n_j$ for some $n_j\in \mathbb{Z}$. This allows us to define the following simplicial map

\[
H:P\times [0,n_j]\to L
\]
\[
H(v,i)=\big((\pi\circ \overline{(\phi,\psi)}\circ s)(v)\big)(i)
\]

We have $H(v,0)=\phi|_{K_{j}}\circ \eta_j$ and $H(v,n_j)=\psi|_{K_{j}}\circ \eta_j$. Thus, we have shown that  $\phi|_{K_{j}}\circ \eta$ and $\psi|_{K_{j}}\circ \eta$ are in the same contiguity class with $n_{j}$ steps. Hence, $\SD_{m}(\phi,\psi)\leq \secat_{m}(\pi')$.




Conversely, suppose $SD_{m}(\phi,\psi)=k.$ There exists $k+1$ subcomplexes of K, namely $\{K_{0},\cdots,K_{k}\}$, and each j, we have $\phi|_{K_{j}}\circ \eta_j$ and $\psi|_{K_{j}}\circ \eta_j$ are in the same contiguity class with $n_{j}$ steps, where $\eta_j:P\to K_{j}$ is a simplicial map from an $m$-dimensional simplicial complex $P$. Then we have the family of simplicial maps $$H_{j}:P\times [0,n_{j}]\to L$$ such that $H_{j}(-,0)=\phi|_{K_{j}}\circ \eta$ and $H_j(-,n_{j})=\psi|_{K_{j}}\circ \eta$. Define $G_i: P\to L$ by $G_{i}(-)(t)=H_{i}(-,t)$ where $0\leq i\leq n_j$, $i\in \mathbb{Z}$.

\[
\begin{tikzcd}
&& P_{(\phi,\psi)} \arrow[r, "\overline{(\phi,\psi)}"] \arrow[d, "{\pi'}"] 
& PL \arrow[d, "{\pi=(\alpha,\omega)}"] \\
P \arrow[r, swap, "{\eta_j}"] \arrow[urr, "{s}",dashed] \arrow[bend left=60,swap]{urrr}{\tilde{G_{i}}}
& K_j \arrow[r, hook, swap, "{\iota_j}"] 
& K \arrow[r, swap, "{(\phi,\psi)}"] 
& L \times L
\end{tikzcd}
\]

Since $\pi:PL\to L\times L$ is a simplicial fibration, the pull-back simplicial map $\pi:P_{(\phi,\psi)}\to K$ is also a simplicial fibration. 
$$\pi\circ \tilde{G}_{i}=(\phi,\psi)\circ\iota_{j}\circ\eta_{j}$$

By the universality of the pullback diagram, there exists a simplicial map $s_{j}:P\to P_{(\phi,\psi)}$ such that $\phi\circ s$ and $\iota_j\circ\eta_{j}$ in the same contiguity class.  Therefore, $\secat_{m}(\pi')\leq SD_{m}(\phi,\psi)$. This completes the proof of the Theorem.



\end{proof}

Before we give the application of Theorem \ref{pullback}, we introduce the general notion of $m$--simplicial LS category of simplical map $\phi:K\to L$ below. 

\begin{definition}
	Let $\phi:K\to L$ be simplicial map between simplicial complexes and fix some integer $m\geq 1$. The $m$--simplicial LS category $scat_m(\phi)$ is the least integer $k\geq 0$ such that $K$ has a cover $K_0,\dots,K_k$ where $K_0,\dots,K_k$ are subcomplexes of $K$ and each $K_j$ has the property that any simplicial map $\eta:P\rightarrow K_j$ from an $m$-dimensional simplicial complex P, $\phi\circ\iota_j\circ\eta$ and $c_{v_0}\circ\iota_j\circ\eta$ are in the same contiguity class where $\iota_j:K_j\rightarrow K$ is the inclusion and $c_{v_0}:K\rightarrow L$ is the constant map, $v_{0}\in L$. 
\end{definition}

Note that $\phi$ is the identity simplicial map $\id_{K}$, we recover the above $m-$ simplicial LS category of simplicial complex $K$, i.e $\scat_{m}(\id_{K})=\scat_{m}(K)$. 

\begin{corollary}
    Let $\phi:K\to L$ be a simplicial map and $c_{v_{0}}$ be a constant map. Then $SD_{m}(\phi,c_{v_{0}})=\scat_{m}(\phi)$. 
\end{corollary}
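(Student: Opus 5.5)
The corollary is that $\SD_m(\phi,c_{v_0})=\scat_m(\phi)$ for $\phi:K\to L$. I would prove it directly from the definitions by comparing the two covering conditions, rather than going through Theorem~\ref{pullback}. That theorem could also be used, since $\SD_m(\phi,c_{v_0})=\secat_m(\pi')$ for the pullback of the path fibration along $(\phi,c_{v_0})$. But the direct route avoids the fibration machinery entirely.

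The key observation is about compositions with the inclusion. For any subcomplex $K_j\subseteq K$ with inclusion $\iota_j$, and any simplicial map $\eta:P\to K_j$, the map $\iota_j\circ\eta:P\to K$ is just $\eta$ regarded as a map into $K$. So the $\SD_m$ condition for $K_j$ reads: for every $m$-dimensional $P$ and every $\eta:P\to K_j$,
\[
\phi\circ\iota_j\circ\eta\sim c_{v_0}\circ\iota_j\circ\eta .
\]
This is word for word the condition in the definition of $\scat_m(\phi)$. Here $c_{v_0}:K\to L$ is the same constant map in both cases, with $v_0\in L$ fixed.

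With that in hand, the two inequalities are symmetric. Suppose $\SD_m(\phi,c_{v_0})=k$, with witnessing cover $K_0,\dots,K_k$. The same cover then witnesses $\scat_m(\phi)\le k$. Conversely, a cover witnessing $\scat_m(\phi)=k$ witnesses $\SD_m(\phi,c_{v_0})\le k$. Neither direction needs any intermediate contiguity chain, because the relation required of $\phi\circ\eta$ and $c_{v_0}\circ\eta$ is literally identical in both definitions.

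The only point needing care is the choice of basepoint. The corollary fixes $c_{v_0}$, and $\scat_m(\phi)$ is defined using that same $v_0$, so nothing needs to be done here. If one wanted independence of $v_0$, one would use that $L$ is edge-path connected. Then any two constant maps $c_{v_0}$ and $c_{w_0}$ are in the same contiguity class: along an edge path $v_0=u_0,u_1,\dots,u_r=w_0$ in $L$, consecutive constant maps $c_{u_i}$ and $c_{u_{i+1}}$ are contiguous. Proposition~\ref{pro32} then shows $\SD_m(\phi,c_{v_0})$ does not depend on $v_0$. I expect this basepoint remark to be the only step requiring any argument; the rest is an unwinding of definitions.
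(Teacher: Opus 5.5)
Your argument is correct, but it follows a different route from the paper's. The paper obtains the corollary from Theorem~\ref{pullback}. It writes $\SD_m(\phi,c_{v_0})=\secat_m(\pi')$, where $\pi'$ is the pullback of the path fibration $\pi=(\alpha,\omega)\colon PL\to L\times L$ along $(c_{v_0},\phi)$. It then identifies that pullback with the based path complex $P_0L$, in the spirit of describing LS category as the genus of the based path fibration.

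You instead note that, with $\psi=c_{v_0}$, the condition imposed on each $K_j$ in the definition of $\SD_m$ is verbatim the condition in the definition of $\scat_m(\phi)$. So the admissible covers, and hence the minima, coincide.

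Your route has two advantages:
\begin{itemize}
\item It is self-contained and independent of Theorem~\ref{pullback}, whose proof must convert sections of $\pi'$ into contiguity chains and back.
\item It avoids the identification step. That step is literally correct only for $\phi=\id_K$; in general $P_{(c_{v_0},\phi)}$ is the pullback of $\omega\colon P_0L\to L$ along $\phi$. Even granting it, one would still have to unwind $\secat_m(\pi')$ back into the definition of $\scat_m(\phi)$, which is exactly your comparison.
\end{itemize}
What the paper's approach offers is conceptual rather than logical: it places $\scat_m(\phi)$ inside the $m$-sectional-category framework.

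Your remark on independence of $v_0$, via edge-path connectivity of $L$ and Proposition~\ref{pro32}, is correct but not needed for the statement as posed.
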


\begin{proof}
    By Theorem \ref{pullback}, it suffices to show that the pullback space $P_{(\phi,c_{v_{0}})}$ is $P_{0}L$ which consists of all Moore paths $\gamma\in PL$ whose initial point is the base vertex $v_{0}\in L$. However, it is clear from the definition of pullback diagram, i.e $P_{0}L=P_{(c_{v_{0}},\phi)}$  
\[
\begin{tikzcd}
P_{(c_{v_{0}},\phi)} \arrow[r, "\overline{(c_{v_{0}}.\phi)}"] \arrow[d, "\pi'"] 
& PL \arrow[d, "\pi=(\alpha.w)"] \\
K \arrow[r, "(c_{v_{0}}.\phi)"] & L \times L
\end{tikzcd}
\]
    
\end{proof}

\begin{remark}
    Let $\id_{K}:K\to K$ be a identity simplical map and $c_{v_{0}}$ be a constant map. Then $$SD_{m}(\id_{K},c_{v_{0}})=\scat_{m}(K).$$ This is another proof of the Theorem \ref{scat}.  
\end{remark}

\section{$m$-Discrete Topological complexity}

\begin{definition}
The $m$-discrete topological complexity $\TC^m(K)$ of a simplicial complex $K$ is the least non-negative integer $k$ such that $K^2$ can be covered by subcomplexes $\Omega_0, \cdots,\Omega_k$ of $K^2$, each of which satisfies that there exists simplicial map $\sigma_j : \Omega_j \to K$ such that for any map $\eta_j:P\rightarrow \Omega_j$  from an $m$-dimensional simplicial complex $P$, $\Delta\circ \sigma_j\circ \eta_j \sim \iota_{j}\circ \eta_j$ holds where $\iota_j : \Omega_j \hookrightarrow K^2$ is the inclusion and $\Delta:K\rightarrow K^2$ is the diagonal map.
\end{definition}

Such $\Omega\subset K^2$ subcomplexes are called \textit{Farber subcomplex of dimension $m$} or simply \textit{Farber subcomplex} if it is clear from the context. Notice that it should not be confused with the $m$-Farber subcomplex as in \cite{ABCD, FTMVMV1} which refers to the Farber subcomplexes used to set up the $m$-th discrete topological complexity. 

\begin{theorem}\label{TC^{m}}
  $\TC^m(K)=\SD_m(pr_1,pr_2).$  
\end{theorem}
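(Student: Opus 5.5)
We prove the two inequalities $\SD_m(\pr_1,\pr_2)\leq \TC^m(K)$ and $\TC^m(K)\leq \SD_m(\pr_1,\pr_2)$ separately, keeping the same cover of $K^2$ in each direction. Throughout, $\pr_1,\pr_2:K^2\to K$ are the projections of the categorical product. The elementary identities we rely on are $\pr_1\circ\Delta=\pr_2\circ\Delta=\id_K$ and $(\pr_1\circ\iota_j,\pr_2\circ\iota_j)=\iota_j$. We also use Lemma \ref{easy lemma}, together with the fact that a map into $K^2$ is determined by its two components, so that contiguity in $K^2$ can be checked componentwise.

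\textbf{First inequality: $\SD_m(\pr_1,\pr_2)\leq\TC^m(K)$.} Let $\TC^m(K)=k$, realised by Farber subcomplexes $\Omega_0,\dots,\Omega_k$ with sections $\sigma_j:\Omega_j\to K$. Fix a simplicial map $\eta:P\to\Omega_j$ with $P$ of dimension $m$. By hypothesis $\Delta\circ\sigma_j\circ\eta\sim\iota_j\circ\eta$. Composing with $\pr_1$ and $\pr_2$ preserves contiguity classes, as in Proposition \ref{yprop3.4}. This gives
\[
\pr_1\circ\iota_j\circ\eta\sim\sigma_j\circ\eta\sim\pr_2\circ\iota_j\circ\eta .
\]
Hence the same cover witnesses $\SD_m(\pr_1,\pr_2)\leq k$.

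\textbf{Second inequality: $\TC^m(K)\leq\SD_m(\pr_1,\pr_2)$.} Let $\SD_m(\pr_1,\pr_2)=k$ with cover $\Omega_0,\dots,\Omega_k$, and take $\sigma_j:=\pr_1|_{\Omega_j}$ as the candidate section. For $\eta:P\to\Omega_j$ we must show $\Delta\circ\pr_1\circ\eta\sim\iota_j\circ\eta$. Written in components, this reads
\[
(\pr_1\eta,\pr_1\eta)\sim(\pr_1\eta,\pr_2\eta).
\]
The first components agree, and the second components lie in the same contiguity class by the choice of cover. Applying Lemma \ref{easy lemma} to the pair of maps $\pr_1\eta\sim\pr_1\eta$ and $\pr_1\eta\sim\pr_2\eta$, both with domain $P$, gives the relation after precomposing with the diagonal $P\to P\times P$. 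Concretely, if $\pr_1\eta=f_0\sim_c\dots\sim_c f_r=\pr_2\eta$, then each step $(\pr_1\eta,f_i)\sim_c(\pr_1\eta,f_{i+1})$ holds directly. Indeed, for a simplex $\tau$ of $P$ the vertex set $\{(\pr_1\eta(v),f_i(v)),(\pr_1\eta(v),f_{i+1}(v))\}_{v\in\tau}$ projects onto the simplices $\pr_1\eta(\tau)$ and $f_i(\tau)\cup f_{i+1}(\tau)$, so it is a simplex of the categorical product.

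\textbf{Main obstacle.} The one delicate point is this componentwise contiguity in the categorical product. We must check that a vertex set whose two projections are simplices really is a simplex of $K\times K$. This follows from the definition of the categorical product, but it should be stated explicitly, since Lemma \ref{easy lemma} is phrased only for product maps $\phi\times\phi'$ and not for maps of the form $(f,g):P\to K^2$.
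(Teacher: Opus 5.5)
Your proof is correct. The first inequality matches the paper's argument exactly: compose $\Delta\circ\sigma_j\circ\eta\sim\iota_j\circ\eta$ with $\pr_1$ and $\pr_2$, and use $\pr_1\circ\Delta=\pr_2\circ\Delta=\id_K$.

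For the reverse inequality $\TC^m(K)\leq\SD_m(\pr_1,\pr_2)$ you take a genuinely different route from the paper.

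\textbf{The paper's route.} It tries to build $\sigma_j$ out of the contiguity chain $\pr_1\circ\eta=\phi_1^j\sim_c\cdots\sim_c\phi_l^j=\pr_2\circ\eta$. It declares $\sigma_j([x],[y])$ to be the union $[x]\cup[y]$ of the images under the two end maps of that chain. As written, this has several problems:
\begin{enumerate}
\item it assigns a vertex set rather than a vertex, so it is not a simplicial map;
\item it is built from the chain attached to a particular $\eta$, whereas the definition of $\TC^m$ asks for a single $\sigma_j$ that works for every $\eta$;
\item it uses only the two ends of the chain, so it implicitly assumes a single contiguity step;
\item it concludes with $\TC(K)\leq k$ rather than $\TC^m(K)\leq k$.
\end{enumerate}

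\textbf{Your route.} You fix $\sigma_j=\pr_1|_{\Omega_j}$ in advance, independently of $\eta$. The required relation then comes from the explicit chain of maps $(\pr_1\eta,f_i)\colon P\to K^2$. Each step is a contiguity, and each $(\pr_1\eta,f_i)$ is simplicial, because a vertex set in the categorical product is a simplex exactly when both of its projections are simplices. So the ``main obstacle'' you flag is just the definition of $K\times K$ and needs no further argument.

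This is the same section the paper itself uses in the proof of Theorem~\ref{hsecat}. Your concrete chain is also preferable to invoking Lemma~\ref{easy lemma}: that lemma is stated without proof, and applying it here would require padding the trivial chain $\pr_1\eta\sim\pr_1\eta$ to the length of the chain from $\pr_1\eta$ to $\pr_2\eta$.
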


\begin{proof}
We first show that $\SD_m(pr_1,pr_2)\leq TC^{m}(K)$. Suppose $TC^{m}(K)=k$. Then there is a covering for $K^{2}$
which consists of Farber subcomplexes $\{\Omega_{0},\cdots,\Omega_{k}\}$ of $K^{2}$, each of which satisfies that there exists simplicial map $\sigma_j : \Omega_j \to K$ such that for any map $\eta_j:P\rightarrow \Omega_j$  from an $m$-dimensional simplicial complex $P$, $\Delta\circ \sigma_j\circ \eta_j \sim \iota_{j}\circ \eta_j$ holds where $\iota_j : \Omega_j \hookrightarrow K^2$ is the inclusion and $\Delta:K\rightarrow K^2$ is the diagonal map.

For each $\Omega_{j}$, we have the followings:

$$\Delta\circ \sigma_j\circ \eta_j \sim \iota_{j}\circ \eta_j$$

$$pr_{1}\circ(\Delta\circ \sigma_j\circ \eta_j) \sim \pr_{1}\circ\iota_{j}\circ \eta_j=pr_{1}|_{\Omega_{j}}\circ \eta_{j}$$

$$pr_{2}\circ(\Delta\circ \sigma_j\circ \eta_j) \sim \pr_{2}\circ\iota_{j}\circ \eta_j=pr_{2}|_{\Omega_{j}}\circ \eta_{j}$$

Since $pr_{1}\circ(\Delta\circ \sigma_j\circ \eta_j)=pr_{2}\circ(\Delta\circ \sigma_j\circ \eta_j)$, we have $pr_{1}|_{\Omega_{j}}\circ \eta_{j}\sim pr_{2}|_{\Omega_{j}}\circ \eta_{j}$. This gives $\SD_{m}(pr_{1},pr_{2})\leq k.$

Conversely, $\SD_{m}(pr_{1},pr_{2})=k$. Then there exist subcomplexes $\{\Omega_{0},\cdots,\Omega_{k}\}$ which their union cover $K\times K$ and $pr_{1}|_{\Omega_{j}}\circ\eta\sim pr_{2}|_{\Omega_{j}}\circ\eta$  with $\eta:P\to \Omega_{j}$. By the definition of the contiguity class, there exists a finite sequence of simplicial maps $\phi_{i}^{j}:\Omega_{j}^{2}\to\Omega_{j}$ such that $$pr_{1}|_{\Omega_{j}}\circ\eta=\phi_{1}^{j}\sim_{c}\phi_{2}^{j}\sim_{c}\cdots\sim_{c}\phi_{l}^{j}= pr_{2}|_{\Omega_{j}}\circ\eta.$$ To be precise, there exists an element $([x],[y])\in \Omega_{j}$ where $[x]=\{x_{0},\dots,x_{l}\}$ and $[y]=\{y_{0},\dots,y_{l}\}$, 
$$\phi_{1}^{j}([x],[y])\cup\phi_{l}^{j}([x],[y])=\{x_{0},\dots,x_{l},y_{0},\dots,y_{l}\}$$
is a simplex in $K$.

We define a simplicial map $\sigma_{j}:\Omega_{j}\to K$, so that 
\[
\begin{tikzcd}
P\arrow[r,"\eta"] & \Omega_{j}\arrow[r,"\sigma_{j}"] & K \arrow[r,"\Delta"] & K^{2}
\end{tikzcd} 
\]
 $\Delta\circ \sigma_j\circ \eta_j \sim \iota_{j}\circ \eta_j$.

Define $$\sigma_{j}([x],[y])=\phi_{1}^{j}([x],[y])\cup\phi_{l}^{j}([x],[y])=\{x_{0},\dots,x_{l},y_{0},\dots,y_{l}\}.$$

$$\Delta\circ\sigma_{j}\circ\eta([x],[y])=(\{x_{0},\dots,x_{l},y_{0},\dots,y_{l}\},\{x_{0},\dots,x_{l},y_{0},\dots,y_{l}\})$$.

$$(\iota_{j}\circ\eta)([x],[y])=([x],[y])=(\{x_{0},\dots,x_{l}\},\{y_{0},\dots,y_{l}\}).$$ Thus, $\Omega_{j}$ is a Farber subcomplex. Since $0\leq j\leq k$, we get $TC(K)\leq k$. This completes the proof of the theorem.  

\end{proof}

\begin{corollary}\label{tc}
    $\TC^m(K)\leq \TC(K)$.
\end{corollary}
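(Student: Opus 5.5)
The plan is to chain two facts already available in the paper. Theorem~\ref{TC^{m}} identifies $\TC^m(K)$ with $\SD_m(\pr_1,\pr_2)$, where $\pr_1,\pr_2:K^2\to K$ are the two projections. Proposition~\ref{prop1}(1) gives $\SD_m(\pr_1,\pr_2)\leq \SD(\pr_1,\pr_2)$. It remains to identify $\SD(\pr_1,\pr_2)$ with the discrete topological complexity $\TC(K)$. This is the non-$m$ version of Theorem~\ref{TC^{m}} and is proved in \cite{BPV}; it also follows from the characterisation of $\TC(K)$ by Farber subcomplexes in \cite{FTMVMV1}. Putting these together gives
\[
\TC^m(K)=\SD_m(\pr_1,\pr_2)\leq \SD(\pr_1,\pr_2)=\TC(K).
\]

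If one prefers not to cite $\SD(\pr_1,\pr_2)=\TC(K)$, a direct comparison of covers works instead. Take a cover $\Omega_0,\dots,\Omega_k$ of $K^2$ realising $\TC(K)=k$. Each $\Omega_j$ then carries a simplicial map $\sigma_j:\Omega_j\to K$ with $\Delta\circ\sigma_j\sim\iota_j$. For any simplicial map $\eta:P\to\Omega_j$ from an $m$-dimensional complex $P$, precompose with $\eta$. Since the contiguity class is preserved under composition (each elementary contiguity $\varphi\sim_c\varphi'$ gives $\varphi\circ\eta\sim_c\varphi'\circ\eta$), we get $\Delta\circ\sigma_j\circ\eta\sim\iota_j\circ\eta$. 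So every $\Omega_j$ is a Farber subcomplex of dimension $m$ with the same section $\sigma_j$, and hence $\TC^m(K)\leq k$.

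Both arguments are essentially formal. The only point requiring care is to make sure that the definition of $\TC(K)$ used matches the paper's conventions, namely the categorical product $K^2$ and the non-normalised count starting at $k\geq 0$. I would therefore state explicitly which characterisation of $\TC(K)$ from \cite{FTMVMV1} or \cite{BPV} is being invoked. The direct second route avoids this subtlety, so I would present it as the main proof.
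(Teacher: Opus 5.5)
Your first route is exactly the paper's proof, which chains Theorem~\ref{TC^{m}}, Proposition~\ref{prop1}(1) and the identification $\SD(\pr_1,\pr_2)=\TC(K)$ known from \cite{BPV}. Your second route, the one you want to present as the main proof, is correct and genuinely different. It compares the two Farber-type covers directly and needs only that precomposition with $\eta$ preserves each elementary contiguity. It also keeps the same section $\sigma_j$ for every $\eta$, which is exactly what the definition of $\TC^m(K)$ requires.

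What this buys you is that it bypasses Theorem~\ref{TC^{m}} entirely, and that matters here. The paper's chain uses the direction $\TC^m(K)\le\SD_m(\pr_1,\pr_2)$ of that theorem. The argument the paper writes for this direction does not produce a well-defined simplicial map $\sigma_j:\Omega_j\to K$ independent of $\eta$. The inequality is nevertheless true. Take $\sigma_j=\pr_1|_{\Omega_j}$. A contiguity chain from $\pr_1\circ\eta$ to $\pr_2\circ\eta$ in $K$ then gives one from $(\pr_1\circ\eta,\pr_1\circ\eta)$ to $(\pr_1\circ\eta,\pr_2\circ\eta)$ in the categorical product $K^2$. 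So your first route is also sound once this step is supplied.

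The paper's route, in return, presents the corollary as an instance of the general monotonicity $\SD_m\le\SD$, parallel to Remark~\ref{rem} for $\scat_m$. It also fits the $\SD$-framework used in the following corollary $\TC^{\dim(K)}(K)=\TC(K)$.
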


\begin{proof} The proof follows directly from the definitions of the usual and $m$-discrete topological complexities as well as from Proposition~\ref{prop1} and Theorem~\ref{TC^{m}}.
     
\end{proof}

\begin{corollary}
    $\TC^{dim(K)}(K)=\TC(K).$
\end{corollary}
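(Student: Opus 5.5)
We argue through the identification of Theorem~\ref{TC^{m}}, $\TC^m(K)=\SD_m(\pr_1,\pr_2)$, where $\pr_1,\pr_2\colon K^2\to K$ are the projections. Put $d=\dim(K)$. The inequality $\TC^{d}(K)\leq \TC(K)$ is Corollary~\ref{tc}. For the reverse inequality it suffices to show $\SD(\pr_1,\pr_2)\leq \SD_d(\pr_1,\pr_2)$. Here we use that $\TC(K)=\SD(\pr_1,\pr_2)$, as in \cite{BPV}.

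We cannot simply quote Theorem~\ref{dimen}. That theorem only gives the equality at level $\dim(K^2)$, and in the categorical product a simplex of $K\times K$ may have up to $(d+1)^2$ vertices. The plan is therefore to rerun the proof of Theorem~\ref{dimen} with a test complex of dimension $d$ rather than $\dim(K^2)$.

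Let $\Omega_0,\dots,\Omega_k$ be a cover of $K^2$ realizing $\SD_d(\pr_1,\pr_2)=k$. Fix $j$, and let $P_j$ be the $d$-skeleton of $\Omega_j$ with inclusion $\eta_j\colon P_j\hookrightarrow\Omega_j$. By hypothesis we get a chain of contiguities
\[
\pr_1\circ\eta_j=\phi_0\sim_c\phi_1\sim_c\cdots\sim_c\phi_\ell=\pr_2\circ\eta_j,
\]
with each $\phi_i\colon P_j\to K$.

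The key step, and the one I expect to be the main obstacle, is to extend this chain from $P_j$ to all of $\Omega_j$. The observation to exploit is that the vertices of $\Omega_j$ and of $P_j$ coincide, so each $\phi_i$ is already a vertex map $\Omega_j\to K$. Hence what must be checked is the following: for every simplex $\tau$ of $\Omega_j$ of dimension $>d$, the set $\phi_i(\tau)\cup\phi_{i+1}(\tau)$ is a simplex of $K$.

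\begin{itemize}
\item At the two ends of the chain this is automatic. For $\phi_0=\pr_1$ we have $\pr_1(\tau)\subseteq\pr_1(\tau)$, which is a simplex of $K$ of dimension $\leq d$, and similarly for $\phi_\ell=\pr_2$.
\item For an intermediate step I would try to show that every subset of $\phi_i(\tau)\cup\phi_{i+1}(\tau)$ of size $\leq d+1$ comes from a face of $\tau$ lying in $P_j$, and is therefore a simplex.
\item One then needs to pass from ``all small subsets are simplices'' to ``the whole set is a simplex.'' This holds only if $K$ is flag, or if the union already has at most $d+1$ vertices. I would first try to force the latter by replacing each intermediate $\phi_i$ with a map that factors through $\pr_1$ or $\pr_2$ on each simplex. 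For that I would use the explicit form of contiguity chains between projections, namely ``move one coordinate at a time.''
\end{itemize}

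If the extension succeeds, then $\pr_1|_{\Omega_j}\sim\pr_2|_{\Omega_j}$ for every $j$. This gives $\SD(\pr_1,\pr_2)\leq k$, hence $\TC(K)\leq \TC^{d}(K)$, and together with Corollary~\ref{tc} the stated equality follows.

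If the extension cannot be made to work in general, the fallback is to prove the statement at level $\dim(K^2)$ via Theorem~\ref{dimen}. That fallback proves a weaker statement than the one claimed here, at level $\dim(K^2)$ rather than $\dim(K)$, so the extension step is exactly where the argument stands or falls.
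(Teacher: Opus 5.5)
Your objection to the shortcut is correct, and it applies to the paper's own proof. That proof is just Corollary~\ref{tc} plus an appeal to Theorem~\ref{dimen} and Theorem~\ref{TC^{m}}. Applied to $\pr_1,\pr_2\colon K^2\to K$, Theorem~\ref{dimen} only gives equality at level $\dim(K^2)=(d+1)^2-1$, which exceeds $d$ once $d\geq 1$.

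Your proposal does not close the gap either. The extension step is left open, and neither remedy you mention is available: $K$ need not be flag, and nothing justifies replacing an arbitrary contiguity chain on $P_j$ by one that ``moves one coordinate at a time''. Your first bullet is also wrong as stated. At the ends you only learn that $\phi_0$ and $\phi_\ell$ are simplicial on $\Omega_j$. Whether $\phi_0(\tau)\cup\phi_1(\tau)$ and $\phi_{\ell-1}(\tau)\cup\phi_\ell(\tau)$ are simplices is exactly as nontrivial as the intermediate steps. As written, you prove only the fallback $\TC^{\dim(K^2)}(K)=\TC(K)$.

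The missing idea is that your second alternative, a union with at most $d+1$ vertices, holds automatically because the \emph{target} $K$ has dimension $d$; the chain needs no modification. Fix $\tau\in\Omega_j$, put $\phi=\phi_i$, $\phi'=\phi_{i+1}$ and $U=\phi(\tau)\cup\phi'(\tau)$.

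First suppose $\phi(x_0)\neq\phi'(x_0)$ for some vertex $x_0$ of $\tau$, and that $|U|\geq d+2$. Start from $x_0$, which already realizes two values of $U$. Keep adding vertices of $\tau$ that realize a value of $U$ not yet realized, until $d+2$ values are realized. This takes at most $d$ further vertices, so it gives a face $\rho\subseteq\tau$ with at most $d+1$ vertices. Then $\rho\in P_j$ and $|\phi(\rho)\cup\phi'(\rho)|\geq d+2$. But contiguity on $P_j$ requires this set to be a simplex of the $d$-dimensional complex $K$, a contradiction. Hence $|U|\leq d+1$ whenever $\phi$ and $\phi'$ differ on $\tau$.

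If instead they agree on $\tau$, then $U=\phi_i(\tau)$, and $|\phi_i(\tau)|\leq d+1$ follows by induction along the chain. It holds for $\phi_0=\pr_1$, which is simplicial on $\Omega_j$. At each step where the map changes on $\tau$, the previous case applies.

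Once $|U|\leq d+1$ is known, your second bullet shows that $U$ itself is a simplex. So every $\phi_i$ is simplicial on $\Omega_j$, and the chain is a contiguity chain on $\Omega_j$. This gives $\pr_1|_{\Omega_j}\sim\pr_2|_{\Omega_j}$, hence $\TC(K)=\SD(\pr_1,\pr_2)\leq\SD_d(\pr_1,\pr_2)=\TC^{d}(K)$.

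The same argument shows $\SD_m(\varphi,\psi)=\SD(\varphi,\psi)$ whenever $m$ is at least the dimension of the \emph{codomain}. This, rather than Theorem~\ref{dimen}, is what the corollary actually requires.
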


\begin{proof}
    Let $m=dim(K).$ By Corollary \ref{tc} we have $\TC^m(K)\leq\TC(K)$. 
  The reverse inequality follows from Theorem \ref{dimen} and Theorem \ref{TC^{m}}.
\end{proof}

\begin{theorem}
    For a simplicial complex $K$, we have $$\scat_{m}(K)\leq TC^{m}(K)\leq \scat_{m}(K^{2})$$
\end{theorem}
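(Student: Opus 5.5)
Both inequalities will be reduced to statements about $\SD_m$, using Theorem~\ref{TC^{m}} (which gives $\TC^m(K)=\SD_m(\pr_1,\pr_2)$) and Theorem~\ref{scat} (which gives $\scat_m(L)=\SD_m(\id_L,c_{w_0})$ for any complex $L$ and base vertex $w_0$).

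\emph{Upper bound.} Lemma~\ref{lemma} applied to the pair $\pr_1,\pr_2\colon K^2\to K$ gives directly
\[
\TC^m(K)=\SD_m(\pr_1,\pr_2)\leq \scat_m(K^2).
\]
This step is immediate.

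\emph{Lower bound.} Fix a base vertex $v_0\in K$. Consider the simplicial map $\mu\colon K\to K^2$ given by $\mu(v)=(v,v_0)$; this is $i_1$ from Theorem~\ref{sLSinclusion}. It is simplicial for the categorical product, since the image of a simplex $\sigma$ is $\sigma\times\{v_0\}$. Then $\pr_1\circ\mu=\id_K$ and $\pr_2\circ\mu=c_{v_0}$. Proposition~\ref{yprop3.3} therefore gives
\[
\SD_m(\id_K,c_{v_0})=\SD_m(\pr_1\circ\mu,\pr_2\circ\mu)\leq \SD_m(\pr_1,\pr_2).
\]
Combining this with Theorem~\ref{scat} and Theorem~\ref{TC^{m}} yields $\scat_m(K)\le \TC^m(K)$.

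The only point needing care is the lower bound: one has to check that $\mu$ is a genuine simplicial map into the categorical product $K^2$, and that the constant map produced is the same $c_{v_0}$ as in Theorem~\ref{scat}. The latter is harmless, since by Proposition~\ref{pro32} and edge-path connectedness, constant maps at different vertices lie in the same contiguity class. Alternatively, one can argue directly from the definition. Pull back a cover $\Omega_0,\dots,\Omega_k$ witnessing $\SD_m(\pr_1,\pr_2)$ to the cover $K_j=\mu^{-1}(\Omega_j)$ of $K$. Then every $\eta\colon P\to K_j$ gives $\mu\circ\eta\colon P\to\Omega_j$, hence $\eta\sim c_{v_0}\circ\eta$, which is exactly the defining property in $\scat_m$. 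This is the same argument as in the proof of Proposition~\ref{yprop3.3}.
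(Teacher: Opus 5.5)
Your proof is correct and follows the paper's argument. The lower bound precomposes $\pr_1,\pr_2$ with $i_1$ (so that $\pr_1\circ i_1=\id_K$ and $\pr_2\circ i_1=c_{v_0}$) and applies Proposition~\ref{yprop3.3} together with Theorems~\ref{scat} and~\ref{TC^{m}}, while the upper bound is Lemma~\ref{lemma} applied to $\pr_1,\pr_2\colon K^2\to K$; incidentally, you correctly obtain $\scat_m(K^2)$, where the paper's displayed chain writes $\scat(K^2)$.
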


\begin{proof}
By Theorem \ref{scat}, $\scat_{m}(K)=SD_{m}(\id_{K},c_{v_{0}})$ and similarly Theorem \ref{TC^{m}}, $\SD_{m}(pr_{1},pr_{2})=TC^{m}(K)$.

The first inequality follows Proposition 1.1 and the following observations $pr_{1}\circ\iota_{1}=\id_{K}$ and $pr_{2}\circ\iota_{1}=c_{v_{0}}$.

$$\scat_{m}(K)=SD_{m}(\id_{K},c_{v_{0}})=\SD_{m}(pr_{1}\circ\iota_{1},pr_{2}\circ\iota_{1})\leq \SD_{m}(pr_{1},pr_{2})=TC^{m}(K)$$

The second inequality follows from Lemma \ref{lemma} and Theorem \ref{TC^{m}} since $pr_{1},pr_{2}:K^{2}\to K$ projection of $K^{2}$ first and second coordinates respectively.
    $$TC^{m}(K)=\SD_{m}(pr_{1},pr_{2})\leq \scat(K^{2})$$
\end{proof}

We would like to finish our paper $m$-topological complexity can be defined in terms of Svarc genus ($m$-sectional category).

\begin{theorem}\label{hsecat}
 For given simplicial complex $K$, $TC^{m}(K)=\hsecat_{m}(\Delta)$, where $\Delta:K\to K^{2}$ is the diagonal map. 
\end{theorem}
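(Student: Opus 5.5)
I will prove the two inequalities $\hsecat_m(\Delta)\leq \TC^m(K)$ and $\TC^m(K)\leq \hsecat_m(\Delta)$ by comparing the defining covers of $K^2$ directly. The two definitions involve the same kind of data: subcomplexes $\Omega_0,\dots,\Omega_k$ of $K^2$, and for each simplicial map $\eta:P\to\Omega_j$ from an $m$-dimensional complex $P$, a map into $K$ whose composite with $\Delta$ is in the contiguity class of $\iota_j\circ\eta$. They differ only in where this map comes from. In $\TC^m(K)$ there is a single section $\sigma_j:\Omega_j\to K$, precomposed with $\eta$. In $\hsecat_m(\Delta)$ the map $s:P\to K$ may depend on $\eta$.

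For $\hsecat_m(\Delta)\leq\TC^m(K)$, I start from a cover by $m$-dimensional Farber subcomplexes $\Omega_0,\dots,\Omega_k$ with sections $\sigma_j$. Given $\eta:P\to\Omega_j$, I set $s:=\sigma_j\circ\eta:P\to K$. Then $\Delta\circ s=\Delta\circ\sigma_j\circ\eta\sim\iota_j\circ\eta$ by hypothesis. This is exactly the defining condition of $\hsecat_m(\Delta)$, so the same cover works.

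For the reverse inequality, the fact that $s$ varies with $\eta$ is the obstacle, so I will route the argument through Theorem~\ref{TC^{m}} and show $\SD_m(\pr_1,\pr_2)\leq\hsecat_m(\Delta)$. I take a cover $\Omega_0,\dots,\Omega_k$ realizing $\hsecat_m(\Delta)$. For $\eta:P\to\Omega_j$ there is $s:P\to K$ with $\Delta\circ s\sim\iota_j\circ\eta$. Applying $\pr_1$ and $\pr_2$ preserves contiguity classes, since post-composition with a simplicial map preserves contiguity (as in Proposition~\ref{yprop3.4}). This gives
\[
\pr_1|_{\Omega_j}\circ\eta=\pr_1\circ\iota_j\circ\eta\sim \pr_1\circ\Delta\circ s=s=\pr_2\circ\Delta\circ s\sim\pr_2\circ\iota_j\circ\eta=\pr_2|_{\Omega_j}\circ\eta.
\]
Hence the same cover witnesses $\SD_m(\pr_1,\pr_2)\leq k$, and Theorem~\ref{TC^{m}} yields $\TC^m(K)\leq k$.

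The main obstacle is therefore entirely contained in Theorem~\ref{TC^{m}}, namely its direction $\TC^m(K)\leq\SD_m(\pr_1,\pr_2)$, which builds a single section $\sigma_j$ on $\Omega_j$. I take that result as given, as the paper permits. If one prefers to avoid relying on it, the natural fallback is to read $\TC^m$ with the section allowed to depend on $\eta$. Under that reading the first paragraph's argument works in both directions with no further work, since the two definitions then coincide verbatim.
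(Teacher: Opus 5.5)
Your proof is correct, and the direction $\hsecat_m(\Delta)\le\TC^m(K)$ (take $s=\sigma_j\circ\eta$) is exactly the paper's. For $\TC^m(K)\le\hsecat_m(\Delta)$ you take a different route. You first derive $\pr_1|_{\Omega_j}\circ\eta\sim\pr_2|_{\Omega_j}\circ\eta$, and then let Theorem~\ref{TC^{m}} supply a uniform section.

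The paper removes the $\eta$-dependence directly. It sets $\sigma_j=\pr_1\circ\iota_j$ and post-composes the chain $\iota_j\circ\eta\sim\Delta\circ s$ with the simplicial map $\Delta\circ\pr_1$. Using $\pr_1\circ\Delta=\id_K$, this gives
\[
\Delta\circ\sigma_j\circ\eta=(\Delta\circ\pr_1)\circ\iota_j\circ\eta\sim(\Delta\circ\pr_1)\circ\Delta\circ s=\Delta\circ s\sim\iota_j\circ\eta .
\]
So the obstacle you isolate is not really there. The first projection $\pr_1\circ\iota_j$ is a valid uniform section as soon as some $s$ exists for each $\eta$, and the argument needs only post-composition and transitivity.

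The paper's route is self-contained. Yours is only as solid as the direction $\TC^m(K)\le\SD_m(\pr_1,\pr_2)$ of Theorem~\ref{TC^{m}}, and the paper's written proof of that direction does not hold up: its $\sigma_j$ assigns to a vertex a set of vertices $\{x_0,\dots,y_l\}$, not a vertex of $K$. The statement you cite is nevertheless true by the same idea. With $\sigma_j=\pr_1\circ\iota_j$ one has $\Delta\circ\sigma_j\circ\eta=(\pr_1\circ\eta,\pr_1\circ\eta)$ and $\iota_j\circ\eta=(\pr_1\circ\eta,\pr_2\circ\eta)$. Contiguity into the categorical product is checked coordinatewise (compare Lemma~\ref{easy lemma}), so these are in the same contiguity class.

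In particular, your fallback reading of $\TC^m$ with $\eta$-dependent sections is equivalent to the stated definition, so you do not need it. What your route buys is the explicit inequality $\SD_m(\pr_1,\pr_2)\le\hsecat_m(\Delta)$, which ties the result to the $\SD_m$ framework used in the rest of the paper.
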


\begin{proof}
    Suppose $\secat_m(\Delta)=k$. Then there exists $K_0, \ldots, K_k$ subcomplexes of $K^2$ covering $K^2$ such that for each $j$, for any simplicial map $\eta_j:P\to K_j$ where $P$ is an $m$-dimensional simplicial complex, there exists $s:P\to K$ satisfying $\Delta\circ s\sim \iota_j\circ \eta_j$.

    Define $\sigma_j: K_j\to K$ by $\sigma_j=\pr_1\circ \iota_j$ where $\pr_1: K^2\to K$ is the projection to the first factor. For each $j$, $\sigma_j$ satisfies that for any map $\eta_j:P\to K_j$, we have

    \[
\begin{aligned}
\Delta\circ \sigma_j \circ \eta_j 
    &= \Delta \circ (\pr_1 \circ \iota_j) \circ \eta_j \\
    &= (\Delta\circ \pr_1) \circ (\iota_j\circ \eta_j) \\
    &\sim (\Delta\circ \pr_1)\circ (\Delta\circ s) \\
    &= \Delta \circ s \\
    &\sim \iota_j \circ \eta_j.
\end{aligned}
\]

Hence it follows $\TC^{m}(K)\leq k$.

Conversely, suppose that $\TC^{m}(K)= k$. That is, there exists $\Omega_0,\ldots,\Omega_k$ subcomplexes of $K^2$ covering $K^2$ such that for each $j$, there exists $\sigma_j: \Omega_j\to K$ satisfying that for any map $\eta_j: P \to \Omega_j$, where $P$ is $m$-dimensional simplicial complex, $\Delta\circ \sigma_j\circ \eta_j \sim \iota_j\circ \eta_j$ holds. So if we define $s: P\to K$ by $s=\sigma_j\circ \eta_j$, then it follows that $\secat_m(\Delta)\leq k$. 
    
    

    
\end{proof}

Theorem 4 in \cite{FTGCMVV} can be restated for finite simplicial complexes with the help of Corollary 2 in \cite{FTGCMVV} as follows.

\begin{theorem}[\cite{FTGCMVV}, Theorem 4]\label{Thm4}
    Let $\phi: K\to L$ be a simplicial map for finite simplicial complexes $K$ and $L$. Then $\phi=p\circ \beta$ where $\beta$ is a strong equivalence and $p$ is a simplicial finite-fibration. 
\end{theorem}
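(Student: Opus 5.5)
The plan is to imitate the classical mapping path space construction, with the Moore path complex $PL$ playing the role of the path space. The factorization will come from a pullback of the finite fibration $\pi=(\alpha,\omega)\colon PL\to L\times L$ recalled in Section~2.

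First, form the pullback of $\pi$ along $\phi\times \id_L\colon K\times L\to L\times L$. Concretely, set
\[
E:=P_\phi=\{(v,\gamma)\in K\times PL \mid \phi(v)=\alpha(\gamma)\},
\]
taken as a full subcomplex of the categorical product $K\times PL$. Then define
\[
\beta\colon K\to E,\quad \beta(v)=(v,c_{\phi(v)}),
\qquad
p\colon E\to L,\quad p(v,\gamma)=\omega(\gamma).
\]
The equality $p\circ\beta=\phi$ holds on the nose, because $\omega(c_{\phi(v)})=\phi(v)$. Both maps are simplicial. For $\beta$, a simplex $\sigma$ of $K$ goes to a set of constant paths whose values $\phi(\sigma)$ form a simplex of $L$, which is exactly the simplex condition in $PL$. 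For $p$, $\omega$ is simplicial on $PL$ by \cite{FTGCMVV}.

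Second, show that $p$ is a simplicial finite fibration. The map $p$ is the composite $E\to K\times L\to L$. The map $E\to K\times L$, $(v,\gamma)\mapsto(v,\omega(\gamma))$, is a base change of $\pi$, and pullbacks preserve the lifting property of Theorem~\ref{fibration}: a homotopy into $K\times L$ is lifted in $PL$ and then paired with its $K$-coordinate. To handle the projection and the composite I would use Corollary~2 of \cite{FTGCMVV}. It states that for finite complexes the relevant lifting problems involve only Moore paths of bounded length, so the lift can be taken in finitely many steps. This is where finiteness of $K$ and $L$ enters.

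Third, show that $\beta$ is a strong equivalence, with inverse the projection $r\colon E\to K$, $r(v,\gamma)=v$. One side is immediate: $r\circ\beta=\id_K$. For the other side, I would build a contiguity chain from $\id_E$ to $\beta\circ r$ by shortening paths. Let $\gamma_t$ denote $\gamma$ frozen after time $\gamma^-+t$, that is, made constant from there on. The maps $(v,\gamma)\mapsto(v,\gamma_t)$ are simplicial, consecutive ones are contiguous, and at $t=0$ the path becomes $c_{\phi(v)}$.

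The main obstacle is this last step. $PL$ contains paths of arbitrary length, so there is no uniform number of steps, and a strong equivalence needs a finite chain. I would resolve it as the restatement suggests. By Corollary~2 of \cite{FTGCMVV}, for finite $K$ and $L$ one may replace $PL$ by the finite subcomplex of paths with support length at most some $N$. This $N$ is chosen large enough that the fibration property still holds, e.g.\ at least the number of vertices of $L$, so that the relevant connecting paths fit. With that replacement the truncation chain has at most $N$ steps, which gives $\beta\circ r\sim\id_E$ and completes the factorization.
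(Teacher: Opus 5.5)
The paper gives no argument for this statement; it only quotes \cite{FTGCMVV}. Your Steps 1 and 2 are the natural mapping-path construction and are fine: $E=P_\phi$, $\beta(v)=(v,c_{\phi(v)})$, $p(v,\gamma)=\omega(\gamma)$, a base change of $\pi$ followed by a projection. Lifting through the projection $K\times L\to L$ and through composites is elementary, so Corollary~2 plays no role there.

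The gap is Step~3. First, freezing at the relative time $\gamma^-+t$ is not simplicial, because the vertices of one simplex of $PL$ can have different $\gamma^-$. You must freeze at an absolute time $T$, i.e.\ replace $\gamma$ by $\gamma\circ m_T$ with $m_T(i)=\min(i,T)$. Since supports range over all of $\mathbf{Z}$, bounding the support length still leaves infinitely many paths and unboundedly many values of $T$.

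Second, and decisively, truncating $PL$ destroys the lifting property of Step~2. Take $K=\{*\}$, $L=\partial\Delta^2$ with vertices $0,1,2$, and $\phi(*)=0$. Suppose $f\sim_c g\colon I_n\to L$ (the same applies to two Moore paths spanning an edge of $PL$) and $f(i-1),f(i),f(i+1)$ are pairwise distinct. Then $g(i)\in\{f(i-1),f(i)\}\cap\{f(i),f(i+1)\}=\{f(i)\}$. Now let $\gamma(i)=i\bmod 3$ on $[0,N]$, and lift the one-step move of the endpoint to $(N+1)\bmod 3$. Any lift agrees with $\gamma$ on $[1,N-1]$ and has its value in $\{\gamma(N-1),\gamma(N)\}$ at time $N$. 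It therefore needs support of length at least $N+1$. So no $N$ makes the truncated map a fibration, and nothing indicates that Corollary~2 of \cite{FTGCMVV} says what you attribute to it.

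In fact no construction removes the obstacle you found, as long as a strong equivalence means a finite contiguity chain. That is how the paper uses it in the proof of Theorem~\ref{new5.5}. Iterating the rigidity observation, $N$ contiguity steps starting from the winding path $i\mapsto i\bmod 3$ on $I_n$ leave it unchanged on $[N,n-N]$.

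Suppose $\{*\}\to\partial\Delta^2$ factored as $p\circ\beta$ with $\id_E=F_0\sim_c\cdots\sim_c F_N=\beta\circ\delta$ and $p$ a finite fibration. You could lift the winding path from $\beta(*)$ to some $\tilde\gamma\colon I_n\to E$, using a point as test complex. Then $p\circ F_k\circ\tilde\gamma$, $0\le k\le N$, would be an $N$-step chain from the winding path to a constant map, which is impossible for $n>2N$. So the statement as restated here is false for this $\phi$.

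A correct version must weaken the equivalence. For example, require only that for each finite subcomplex $S\subset P_\phi$ the inclusion $S\hookrightarrow P_\phi$ lie in the contiguity class of $\beta\circ r|_S$; this is presumably closer to what \cite{FTGCMVV} actually proves. For that notion your construction works without truncation. Let $[a,b]$ contain the supports of all paths in $S$. Then the maps $(v,\gamma)\mapsto(v,\gamma\circ m_T)$ for $T=b,b-1,\dots,a$ are simplicial, preserve $\alpha$, and are consecutively contiguous. They run from the inclusion of $S$ to $\beta\circ r|_S$.
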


\begin{theorem}\label{new5.5}
    If $\phi:K\to L$ is a simplicial map and $p:M\to L$ is a simplicial fibration as in Theorem~\ref{Thm4}, then $\hsecat_m(\phi)=\hsecat_m(p)$.
\end{theorem}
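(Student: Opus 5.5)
We will prove the two inequalities $\hsecat_m(p)\le\hsecat_m(\phi)$ and $\hsecat_m(\phi)\le\hsecat_m(p)$ directly from the factorization $\phi=p\circ\beta$ of Theorem~\ref{Thm4}. Here $\beta:K\to M$ is a strong equivalence, so there is $\beta':M\to K$ with $\beta'\circ\beta\sim\id_K$ and $\beta\circ\beta'\sim\id_M$. Both invariants are computed with covers of the \emph{same} base $L$. The plan is therefore to keep the cover $\{L_0,\dots,L_n\}$ fixed and only transfer the local ``homotopy sections'' between $K$ and $M$.

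\textbf{Step 1: $\hsecat_m(p)\le\hsecat_m(\phi)$.} Take a cover $L_0,\dots,L_n$ realizing $\hsecat_m(\phi)$. Fix $j$ and a simplicial map $\eta:P\to L_j$ with $\dim P=m$. By hypothesis there is $s:P\to K$ with $\phi\circ s\sim\iota_j\circ\eta$. Put $s'=\beta\circ s:P\to M$. Then
\[
p\circ s'=p\circ\beta\circ s=\phi\circ s\sim\iota_j\circ\eta,
\]
so the same cover works for $p$. This direction uses only the factorization, not the equivalence.

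\textbf{Step 2: $\hsecat_m(\phi)\le\hsecat_m(p)$.} Take a cover $L_0,\dots,L_n$ realizing $\hsecat_m(p)$, and for $\eta:P\to L_j$ choose $s':P\to M$ with $p\circ s'\sim\iota_j\circ\eta$. Define $s=\beta'\circ s':P\to K$. Then $\phi\circ s=p\circ\beta\circ\beta'\circ s'$. Since $\beta\circ\beta'\sim\id_M$, composing a contiguity chain on the left with $p$ and on the right with $s'$ gives $p\circ\beta\circ\beta'\circ s'\sim p\circ s'$. We use here the elementary fact, implicit in the proofs of Propositions~\ref{yprop3.3} and \ref{yprop3.4}, that pre- and post-composition preserve contiguity: if $f\sim_c g$ then $h\circ f\circ k\sim_c h\circ g\circ k$, because simplicial maps send simplices to simplices. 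Transitivity of the contiguity-class relation then yields $\phi\circ s\sim\iota_j\circ\eta$.

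\textbf{Main obstacle.} The only delicate point is making sure Theorem~\ref{Thm4} really supplies a two-sided strong homotopy inverse $\beta'$, rather than merely a one-sided one. Step 2 needs $\beta\circ\beta'\sim\id_M$; by contrast, $\beta'\circ\beta\sim\id_K$ is never used. In the setting of \cite{FTGCMVV} a ``strong equivalence'' is by definition a map with an inverse up to contiguity class on both sides, so I will cite that definition explicitly. If only a deformation-retraction-type statement were available, I would first check that $\beta$ is the inclusion of $K$ as a strong deformation retract of the mapping-path complex $M$ and take $\beta'$ to be the retraction. With this in place, the two steps give equality, and the fibration property of $p$ is not needed. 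The fibration hypothesis becomes relevant only when this theorem is combined with Theorem~\ref{hsecat=secat} to replace $\hsecat_m(p)$ by $\secat_m(p)$.
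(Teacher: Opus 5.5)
Your proposal is correct and follows the same argument as the paper. The paper also keeps the cover of $L$ fixed and transfers local sections by $\overline{s}\mapsto\beta\circ\overline{s}$ in one direction and $s\mapsto\delta\circ s$ in the other, using $\beta\circ\delta\sim\id_M$; your remark that neither $\delta\circ\beta\sim\id_K$ nor the fibration property of $p$ is needed applies equally to the paper's proof.
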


\begin{proof}
    Suppose $\hsecat_m(p)=k$. Then there exist subcomplexes $L_0, \ldots, L_k$ of $L$ covering $L$ such that each $L_j$ has the property that for any simplicial map $\eta_j:P\to L_j$ from an $m$-dimensional simplicial complex $P$, there exists $s:P\to M$ such that $p\circ s\sim \iota_j\circ \eta_j$. 

    $\beta: K\to M$ is strong equivalence, i.e., there exists a simplicial map $\delta: M\to K$ such that $\delta\circ \beta \sim \id_K$ and $\beta\circ \delta\sim \id_M$. 

    Define a simplicial map $\overline{s}:P\to K$ by $\overline{s}=\delta\circ s$ so that we have

    \[
\begin{aligned}
\phi\circ \overline{s} 
    &= \phi \circ (\delta\circ s) \\
    &= (p\circ \beta)\circ (\delta\circ s) \\
    &= p\circ (\beta \circ \delta) \circ s \\
    &\sim p\circ \id_M \circ s\\
    &= p\circ s\\
    &=\iota_j \circ \eta_j.
\end{aligned}
\]

Therefore $\hsecat_m(\phi)\leq k$. 

On the other hand, suppose $\hsecat_m(\phi)=k$. Then there exist subcomplexes $L_0, \ldots, L_k$ of $L$ covering $L$ such that each $L_j$ has the property that for any simplicial map $\eta_j:P\to L_j$ from an $m$-dimensional simplicial complex $P$, there exists $\overline{s}:P\to K$ such that $p\circ \overline{s}\sim \iota_j\circ \eta_j$. 

If we define a simplicial map $s:P\to M$ by $s=\beta\circ \overline{s}$, we conclude that $\hsecat_m(p)\leq k$.
    
\end{proof}

\begin{theorem}
    For given simplicial complex $K$, $TC^{m}(K)=\secat_{m}(\Delta)$, where $\Delta:K\to K^{2}$ is the diagonal map. 
\end{theorem}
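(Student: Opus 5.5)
The plan is to prove the two inequalities separately. The easy half is $\TC^m(K)\le\secat_m(\Delta)$. By Remark~\ref{homotopy}, $\hsecat_m(\Delta)\le\secat_m(\Delta)$, and by Theorem~\ref{hsecat}, $\TC^m(K)=\hsecat_m(\Delta)$. Together these give $\TC^m(K)\le\secat_m(\Delta)$ immediately.

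For the reverse inequality $\secat_m(\Delta)\le\TC^m(K)$, the idea is to pass through a fibrant replacement of $\Delta$, where Theorem~\ref{hsecat=secat} lets us trade contiguity sections for strict ones. Assuming $K$ is finite, Theorem~\ref{Thm4} gives a factorization $\Delta=p\circ\beta$ with $\beta\colon K\to M$ a strong equivalence, with inverse $\delta\colon M\to K$, and $p\colon M\to K^2$ a simplicial finite-fibration. Then Theorem~\ref{hsecat}, Theorem~\ref{new5.5} and Theorem~\ref{hsecat=secat} chain together as
\[
\TC^m(K)=\hsecat_m(\Delta)=\hsecat_m(p)=\secat_m(p).
\]
It therefore remains to show $\secat_m(\Delta)\le\secat_m(p)$.

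For this last step, take a cover $L_0,\dots,L_k$ of $K^2$ realizing $\secat_m(p)$. For $\eta\colon P\to L_j$ there is a strict section $s\colon P\to M$ with $p\circ s=\iota_j\circ\eta$. The candidate section for $\Delta$ is $\bar s=\delta\circ s$. Then
\[
\Delta\circ\bar s=p\circ\beta\circ\delta\circ s\sim p\circ s=\iota_j\circ\eta,
\]
so $\Delta\circ\bar s$ agrees with $\iota_j\circ\eta$ only up to contiguity.

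The main obstacle is upgrading this contiguity to the strict equality demanded by $\secat_m(\Delta)$. $\Delta$ is not a fibration, so we cannot lift the contiguity chain along $\Delta$ itself. The first thing I would try is to lift the chain from $p\circ\beta\circ\delta\circ s$ to $p\circ s$ through $p$, using Theorem~\ref{fibration} applied to $P\times I_\ell$, starting from $\beta\circ\delta\circ s$. The hope is to arrange the endpoint of the lift to lie in $\beta(K)$, so that composing with $\delta$ gives an honest section of $\Delta$.

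I expect this step genuinely to fail in general. A strict section forces $\iota_j\circ\eta$ to land in the diagonal, which cannot hold for every $\eta\colon P\to L_j$ when $L_j$ is off-diagonal. In that case the statement should be read, as in the first paragraph of the proof of Theorem~\ref{hsecat}, with $\secat_m(\Delta)$ understood via its fibrant replacement $p$. The equality then follows from the displayed chain.
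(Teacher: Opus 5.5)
Your chain $\TC^m(K)=\hsecat_m(\Delta)=\hsecat_m(p)=\secat_m(p)$ is essentially the paper's argument. The paper uses the concrete factorization $\Delta=\pi\circ\beta$ through the Moore path fibration $\pi=(\alpha,\omega)\colon PK\to K\times K$. It then applies Theorem~\ref{new5.5}, together with Theorem~\ref{hsecat} and, implicitly, Theorem~\ref{hsecat=secat}.

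Your diagnosis of the last step is right, and you can state it as a fact rather than an expectation. Under the strict definition, suppose $K$ has two vertices $a\neq b$. Some $L_j$ in the cover must contain the vertex $(a,b)$. The constant map $\eta\colon\Delta^m\to L_j$ at $(a,b)$ then admits no $s$ with $\Delta\circ s=\iota_j\circ\eta$. Hence no admissible cover exists and $\secat_m(\Delta)=\infty$.

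A concrete counterexample is $K$ a single edge. Then $K\times K$ is a $3$-simplex, so $\Omega_0=K\times K$ with $\sigma_0=\pr_1$ gives $\TC^m(K)=0$. The statement as written is therefore false. Your inequality $\TC^m(K)\le\secat_m(\Delta)$ is correct, but vacuous whenever $K$ has more than one vertex.

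The paper's proof does not get around this either: it establishes $\TC^m(K)=\secat_m(\pi)$ and then writes $\secat_m(\Delta)$ in its place. So instead of ``reading'' $\secat_m(\Delta)$ via $p$, state the corrected theorem explicitly: $\TC^m(K)=\secat_m(\pi)$. Equivalently, $\TC^m(K)=\secat_m(p)$ for any factorization $\Delta=p\circ\beta$ as in Theorem~\ref{Thm4}; the value does not depend on the choice, by Theorem~\ref{new5.5} and Theorem~\ref{hsecat=secat}. That corrected statement is exactly what your displayed chain proves.
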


\begin{proof}
    By Example 2 in \cite{FTGCMVV}, there is a factorisation of the diagonal map $\Delta:K\to K\times K$ as in Theorem~\ref{Thm4}, that is, $\Delta = \pi \circ \beta$ where $\pi=(\alpha,\omega): PK\to K\times K$ is the path simplicial fibration and $\beta$ is a strong equivalence. If we apply Theorem~\ref{new5.5} on that factorisation, we complete the proof. 
\end{proof}


\section{Aspherical Spaces $K(\Gamma,1)$}

Let us recall that given a discrete group $\Gamma$, an aspherical space (Eilenberg-MacLane space) $B\Gamma=K(\Gamma,1)$ is defined to be a path-connected space such that $\pi_{1}(B\Gamma)=\Gamma$ and $\pi_{k}(B\Gamma)=0$ for all $k\neq 1.$ Since $B\Gamma$ is the unique up to homotopy equivalence, we define that LS-category of a discrete group to be LS-category of its aspherical space, i.e., $\cat(\Gamma):=\cat(B\Gamma)$. 

It is known that given a simplicial complex K, and taking its geometric realisation |K|, we obtain $\cat(|K|)\leq \scat(K)$ in \cite{FTMVMV2}.

\begin{theorem}\label{Aspherical Space}
    Let K, $|K|$ be a simplicial complex and its geometric realisation. If $|K|=B\Gamma$, then $$\cat(|K|)=\scat(K).$$ 
\end{theorem}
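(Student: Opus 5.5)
The inequality $\cat(|K|)\leq \scat(K)$ is already available from \cite{FTMVMV2}, so the plan is to prove the reverse inequality $\scat(K)\leq \cat(|K|)$ using the asphericity of $|K|=B\Gamma$. The idea is that for an aspherical target, nullhomotopy of a map is detected entirely by fundamental groups. I will then try to realise that algebraic condition combinatorially by contiguity.

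First I will set $n=\cat(|K|)$. Classically, for $B\Gamma$ one has $n=\cat(\Gamma)$, and by Eilenberg--Ganea $n$ is the geometric dimension of $\Gamma$ when that dimension is at least $3$. So $|K|$ admits an open cover $U_0,\dots,U_n$ by sets whose inclusions are nullhomotopic. Because $|K|$ is aspherical, an inclusion $U_j\hookrightarrow |K|$ is nullhomotopic exactly when, on each path component of $U_j$, the induced map on $\pi_1$ is trivial. The obstructions to extending a nullhomotopy over cells of dimension $\geq 2$ vanish since $\pi_k(B\Gamma)=0$ for $k\geq 2$.

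Second, I will replace each $U_j$ by a subcomplex. Using a Lebesgue number argument, I pass to an iterated barycentric subdivision $\sd^N(K)$ in which every closed simplex lies in some $U_j$. I then take $K_j$ to be the subcomplex of $\sd^N(K)$ consisting of the simplices contained in $U_j$. Each component of $K_j$ still has trivial image in $\pi_1(|K|)=\Gamma$. Since $|K_j|\to|K|$ is a map of complexes into a $K(\Gamma,1)$, it is then nullhomotopic. The edge-path group of $\sd^N(K)$ agrees with $\Gamma$, so this triviality can be expressed combinatorially: every edge loop in a component of $K_j$ is edge-path equivalent to a constant loop in $K$.

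Third, and this is the main obstacle, I must upgrade ``nullhomotopic'' to ``in the same contiguity class as a constant map'', and do it in $K$ rather than in $\sd^N(K)$. Homotopy classes of maps agree with contiguity classes only after further subdivision of the domain. Moreover $\scat$ is not subdivision invariant in general; Theorem~\ref{SD-barycentric} only gives the inequality in the wrong direction, since it bounds the distance for $\sd(\phi),\sd(\psi)$ by that for $\phi,\psi$. My first attempt will be to build the contiguity sequence by hand. I would choose a maximal tree in each component of $K_j$ and contract it vertex by vertex through elementary contiguities. The $\pi_1$-triviality would be used to ensure that each $2$-simplex, and hence each higher simplex, can be swept onto the base vertex $v_0$, with asphericity preventing obstructions above dimension $1$. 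I would then try to pull the cover back to $K$ by pushing each $K_j$ forward along the canonical map $c:\sd^N(K)\to K$, using Lemma~\ref{simp-approx}. If this push-forward fails to keep the covering sets categorical, I expect the statement to need an additional hypothesis on $K$. One candidate is that $K$ is already fine enough, so that $N=0$ can be taken. In that case the argument gives $\scat(K)\leq\cat(|K|)$, and combined with \cite{FTMVMV2} this yields equality.
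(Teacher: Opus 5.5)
\textbf{There is a genuine gap, and it cannot be filled: the statement is false.} The gap is the step you yourself call the main obstacle. Your third step is never carried out, and it cannot be, because on a fixed triangulation contiguity classes are strictly finer than homotopy classes, and asphericity does nothing about that difference.

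Concretely, let $\varphi\colon K\to K$ be contiguous to $\id_K$, and let $\sigma$ be a maximal simplex containing $v$. Then $\sigma\cup\varphi(\sigma)$ is a simplex, hence equals $\sigma$, so $\varphi(v)$ lies in every maximal simplex containing $v$. If $K$ has no dominated vertex, this forces $\varphi=\id_K$. So the contiguity class of $\id_K$ is $\{\id_K\}$, and your vertex-by-vertex sweep has no first move.

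The disk of Figure~\ref{pic}, with triangles $013$, $034$, $024$, $245$, $125$, $135$, $345$, is such a complex: each vertex lies in two triangles meeting only in that vertex. Hence $\scat(K)\ge 1$. But $|K|$ is contractible, i.e.\ $|K|=B\Gamma$ with $\Gamma$ trivial, so $\cat(|K|)=0$.

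Your fallback hypothesis does not rescue the statement. For the one-set cover $\{|K|\}$, every closed simplex already lies in the open set, so $N=0$ is available and the conclusion still fails. The trivial group is not essential either. The complete graph $K_5$, as a $1$-dimensional complex, has $|K_5|\simeq\bigvee^6 S^1=BF_6$ and $\cat(|K_5|)=1$. A categorical subcomplex of a graph must be a forest, since an embedded cycle is not nullhomotopic. A forest on five vertices has at most four edges, so covering the ten edges needs three subcomplexes, and $\scat(K_5)\ge 2$.

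\textbf{The paper's proof does not get around this; it assumes exactly the two points you flagged.} First, it replaces $K$ by a fine barycentric subdivision ``which does not change $\scat(K)$''. Only $\scat(\sd K)\le\scat(K)$ holds, and it can be strict. Write $\sd K_5$ as the union of two forests: a subdivided spanning tree of $K_5$ together with one half of each of the six remaining edges, and the six other halves. Forests in a connected graph are categorical, so $\scat(\sd K_5)\le 1<\scat(K_5)$.

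Second, it asserts that simplicially approximating $H_j\colon |L_j|\times I\to |K|$ makes the inclusion $L_j\hookrightarrow K$ itself simplicially homotopic to a constant. In fact the approximation lives on $\sd^M(L_j)\times I_\ell$ for some $M$. It only shows that the inclusion precomposed with a subdivision map $\sd^M(L_j)\to L_j$ lies in the contiguity class of a constant, which is precisely the discrepancy you identified.

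Your diagnosis of where the difficulty lies is right. What is missing is the recognition that it is a real obstruction rather than a technicality. For a fixed triangulation, only the general inequality $\cat(|K|)\le\scat(K)$ survives.
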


\begin{proof}
Since $\cat(B\Gamma)=cd(\Gamma)$ by \cite{EG}, the result is immediate when
$\Gamma$ has torsion, because then $cd(\Gamma)=\infty$. Hence assume that
$\Gamma$ is torsion-free.

Let $\cat(B\Gamma)=k.$ Then there exists an open categorical cover
$B\Gamma=\bigcup_{j=0}^{k}U_j,$ where each inclusion $\iota_j:U_j\hookrightarrow B\Gamma$ is null-homotopic.

After replacing $K$ by a sufficiently fine barycentric subdivision (which does not change $\scat(K)$), we may assume that this cover is refined by finite simplicial subcomplexes $L_j\subseteq K$ such that $|L_j|\subseteq U_j$ for every $j$. Since $\iota_j|_{|L_j|}$ is also null-homotopic, there exists a homotopy $H_j:|L_j|\times I\longrightarrow |K|$ between the inclusion and a constant map.

By the simplicial approximation theorem (after subdividing the triangulation
of $|L_j|\times I$ if necessary), $H_j$ is simplicially approximated by a
simplicial homotopy. Hence the inclusion $L_j\hookrightarrow K$ and the constant map $c_{v_0}$ are simplicially homotopic. By
\cite[Proposition~8(2)]{FTGCMVV}, simplicially homotopic simplicial maps lie
in the same contiguity class. Therefore the inclusion $L_j\hookrightarrow K$
is contiguous to a constant map, so each $L_j$ is categorical.

Thus the subcomplexes $L_0,\ldots,L_k$ form a categorical cover of $K$,
showing that $\scat(K)\le k=\cat(B\Gamma).$
Since the opposite inequality $\cat(|K|)\le \scat(K)$ holds in general, we conclude that $\cat(|K|)=\scat(K).$
\end{proof}

\begin{corollary}\label{corAspherical}
Let K, $|K|$ be a simplicial complex and its geometric realisation. If $|K|=B\Gamma$, then $$\cat_{m}(|K|)=\scat_{m}(K).$$ 
\end{corollary}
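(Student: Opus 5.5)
We will prove the two inequalities $\cat_m(|K|)\le\scat_m(K)$ and $\scat_m(K)\le\cat_m(|K|)$ separately, mirroring the proof of Theorem~\ref{Aspherical Space} but carrying the test maps $\eta:P\to K_j$ from $m$-dimensional complexes along.

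\textbf{The inequality $\cat_m(|K|)\le\scat_m(K)$.} Here $\cat_m$ denotes the $m$-category of \cite{MVMLO}, that is, $\cat_m(X)=D_m(\id_X,c_{x_0})$. By Theorem~\ref{scat} we have $\scat_m(K)=\SD_m(\id,c_{v_0})$. Theorem~\ref{geometric} then gives
\[
D_m(|\id|,|c_{v_0}|)\le \SD_m(\id,c_{v_0}).
\]
Since $|\id|=\id_{|K|}$ and $|c_{v_0}|$ is constant, this yields $\cat_m(|K|)\le\scat_m(K)$.

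\textbf{The reverse inequality.} We would use asphericity to reduce to the case $m=1$. For $X=B\Gamma$, two maps $f,g:Q\to B\Gamma$ from a CW complex are homotopic as soon as they are homotopic on the $1$-skeleton compatibly on $\pi_1$, because all obstructions lie in $\pi_k(B\Gamma)=0$ for $k\ge2$. Consequently the condition defining $\cat_m$ is independent of $m$ for $m\ge1$: a set $U$ satisfies the $m$-condition if and only if it satisfies the $1$-condition, if and only if $\pi_1(U)\to\Gamma$ is trivial on each component, if and only if $U\hookrightarrow B\Gamma$ is null-homotopic. (The last step again uses that $U$ maps into a $K(\Gamma,1)$.) Hence $\cat_m(B\Gamma)=\cat(B\Gamma)$ for $m\ge1$. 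Combining this with Theorem~\ref{Aspherical Space} and Remark~\ref{rem}, we obtain
\[
\scat_m(K)\le\scat(K)=\cat(|K|)=\cat_m(|K|).
\]
Together with the first step, this gives equality for every $m\ge1$, which is the range in which $\scat_m$ is defined.

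\textbf{Main obstacle.} The hard step is justifying that $\cat_m(B\Gamma)=\cat(B\Gamma)$. The argument requires open sets $U$ for which every map from an $m$-complex becomes null-homotopic after composing into $B\Gamma$, and it requires that these sets can be upgraded to sets with null-homotopic inclusion. We would handle this by taking $P$ to range over $1$-dimensional complexes representing loops in $U$. This forces $\pi_1(U_\alpha)\to\Gamma$ to be trivial on every component $U_\alpha$. Since $U$ is an open subset of a CW complex, it has the homotopy type of a CW complex, so maps $U_\alpha\to K(\Gamma,1)$ are classified by the induced homomorphism on $\pi_1$, and the inclusion is null-homotopic. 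Should the ambient convention require $\cat_m$ with $m=0$, that case must be excluded or treated separately, since the $0$-condition is vacuous on path-connected sets.
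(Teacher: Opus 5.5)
\paragraph{Review.} Your chain $\cat_m(|K|)\le\scat_m(K)\le\scat(K)=\cat(|K|)=\cat_m(|K|)$ is the paper's own argument. The paper combines the same three inputs: $\cat_m(|K|)\le\scat_m(K)$, $\cat_m(B\Gamma)=\cat(B\Gamma)$ (cited from \cite{MVMLO}, where you give an obstruction-theoretic proof), and Theorem~\ref{Aspherical Space}. The two ingredients you prove yourself are fine, and the step you flag as the main obstacle is not where the difficulty lies.

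The step that fails is the imported equality $\scat(K)=\cat(|K|)$, and with it your reverse inequality. The proof of Theorem~\ref{Aspherical Space} replaces $K$ by a fine subdivision, on the grounds that this ``does not change $\scat(K)$''. Only $\scat(\sd K)\le\scat(K)$ is available (cf.\ Theorem~\ref{SD-barycentric}), which is the wrong direction. Moreover, the simplicial approximation of $H_j$ lives on a subdivision of $|L_j|\times I$, not on a product $L_j\times I_n$, so it yields no contiguity chain between maps $L_j\to K$. Structurally, $\scat_m(K)$ depends on the triangulation (it is only a strong homotopy invariant), while the hypothesis constrains only the homotopy type of $|K|$.

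In fact the statement is false. Take the complex $K$ of Figure~\ref{pic}.
\begin{itemize}
\item It is a triangulated $2$-disk, so $|K|$ is contractible. Hence it is a model of $B\Gamma$ for the trivial (torsion-free) group, and $\cat_m(|K|)=0$.
\item Its boundary edges $\{0,1\}$, $\{1,2\}$, $\{0,2\}$ lie only in the triangles $\{0,1,3\}$, $\{1,2,5\}$, $\{0,2,4\}$ respectively.
\item Let $\eta$ send the $3$-cycle with vertices $a,b,c$ onto the boundary by $a\mapsto 0$, $b\mapsto 1$, $c\mapsto 2$. If $\eta'\sim_c\eta$, then $\eta'(a)\in\{0,1,3\}\cap\{0,2,4\}=\{0\}$, $\eta'(b)\in\{0,1,3\}\cap\{1,2,5\}=\{1\}$ and $\eta'(c)\in\{1,2,5\}\cap\{0,2,4\}=\{2\}$.
\item So $\eta'=\eta$: the contiguity class of $\eta$ is $\{\eta\}$, and $\eta$ is not in the class of $c_{v_0}\circ\eta$.
\end{itemize}
If ``$m$-dimensional'' means exactly $m$, wedge an $m$-simplex to the cycle at $a$ and send it to $0$. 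Restricting a contiguity chain back to the cycle gives the same contradiction.

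Hence the one-piece cover $\{K\}$ fails, and $\scat_m(K)\ge 1>0=\cat_m(|K|)$ for every $m\ge 1$. By Remark~\ref{rem} this also gives $\scat(K)\ge 1>0=\cat(|K|)$, contradicting Theorem~\ref{Aspherical Space} itself.

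What survives of your argument is the inequality $\cat_m(|K|)\le\scat_m(K)$, together with your proof that $\cat_m(B\Gamma)=\cat(B\Gamma)$ for $m\ge 1$.
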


\begin{proof}
Since $\cat_{m}(B\Gamma)=\cat(B\Gamma)$ \cite{MVMLO} and $\cat_{m}(|K|)\leq \scat_{m}(K)$, the proof of the above theorem can be processed word by word modification since $\cat(B\Gamma)=cd(\Gamma)$ by \cite{EG}.
\end{proof}

Due to the homotopy invariance of aspherical spaces, there is a one-to-one correspondence between group homomorphism $\phi:\Gamma\to\Lambda$ and homotopy classes of maps $B\phi:B\Gamma\to B\Lambda$ that induce $\phi$ on the fundamental group. Hence, the following definition is well-defined.
\begin{definition}
 Let $\phi,\psi:\Gamma\to\Lambda$ be a homomorphism.
 \begin{enumerate}
\item The homotopic distance of $\phi, \psi$, $\D(\phi,\psi),$ is defined to be $\D(B\phi,B\psi).$
\item The m-homotopic distance of $\phi, \psi$, $\D_{m}(\phi,\psi),$ is defined to be $\D_{m}(B\phi,B\psi).$
\item The contiguity distance of $\phi, \psi$, $\SD(\phi,\psi),$ is defined to be $\SD(B\phi,B\psi).$
\item The m-contiguity distance of $\phi, \psi$, $\SD_{m}(\phi,\psi),$ is defined to be $\SD_{m}(B\phi,B\psi).$
 \end{enumerate}
\end{definition}

We prove the same statement of Theorem \ref{Aspherical Space} in the setting of homotopy and contiguity distances. We note that authors of \cite{MVMLO}  proved that $\D(\phi,\psi)=\D_{1}(\phi,\psi)$ where $\phi,\psi:X\to Y$ and $Y=B\Gamma$. 

\begin{theorem}\label{AspherMap}
Let $\phi,\psi:K\to L$ be simplicial maps with their geometric realisations $|\phi|,|\psi|:|K|\to |L|$. Let $|K|=B\Gamma$ and $|L|=B\Lambda$ be aspherical spaces. If $\Gamma,\Lambda$ are a torsion-free groups, then $$\D(|\phi|,|\psi|)=\SD(\phi,\psi).$$ 
\end{theorem}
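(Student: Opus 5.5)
Following the proof of Theorem~\ref{Aspherical Space}, we split the statement into two inequalities and argue each separately.

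\textbf{The inequality $\D(|\phi|,|\psi|)\le \SD(\phi,\psi)$.} This holds in general. It is the case "$m=\infty$" of Theorem~\ref{geometric}, and it is also the result of \cite{BPV}. If $K_0,\dots,K_k$ is a cover realising $\SD(\phi,\psi)$, then the closed sets $|K_j|$ cover $|K|$. Maps in the same contiguity class have homotopic realisations, so $|\phi|$ and $|\psi|$ restricted to each $|K_j|$ are homotopic. Closed subcomplex covers may be used for homotopic distance, since subcomplexes are neighbourhood deformation retracts. This gives the bound.

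\textbf{The inequality $\SD(\phi,\psi)\le \D(|\phi|,|\psi|)$.} Let $\D(|\phi|,|\psi|)=k$. We first bound $k$. By \cite{MVML} the homotopic distance is bounded by the category of the domain, so $k\le \cat(B\Gamma)$. By \cite{EG}, $\cat(B\Gamma)=cd(\Gamma)$, which is finite for torsion-free $\Gamma$ when $B\Gamma$ is realised by a finite complex. Hence $k$ is finite.

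Next, take an open cover $U_0,\dots,U_k$ of $|K|$ with $|\phi|\big|_{U_j}\simeq|\psi|\big|_{U_j}$. As in the proof of Theorem~\ref{Aspherical Space}, pass to an iterated barycentric subdivision $\sd^N K$ on which this cover is refined by subcomplexes $L_j$ with $|L_j|\subseteq U_j$. For each $j$ there is then a homotopy $H_j:|L_j|\times I\to|L|$ from $|\phi|$ to $|\psi|$. Applying the simplicial approximation theorem, after subdividing $L_j\times I_n$, gives a simplicial homotopy. By \cite[Proposition~8(2)]{FTGCMVV}, the maps $\phi\circ c^N|_{L_j}$ and $\psi\circ c^N|_{L_j}$ then lie in the same contiguity class, where $c^N$ is the iterated canonical subdivision map. (Strictly, simplicial approximation also requires subdividing the target $L$, which is why \cite{FTGCMVV} phrases such results for subdivisions.) This shows $\SD(\phi\circ c^N,\psi\circ c^N)\le k$.

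To descend from $\sd^N K$ back to $K$, we need
\[
\SD(\phi,\psi)\le\SD(\phi\circ c^N,\psi\circ c^N).
\]
Proposition~\ref{yprop3.3} only gives the reverse inequality, and Theorem~\ref{SD-barycentric} only gives $\SD(\sd\phi,\sd\psi)\le\SD(\phi,\psi)$. So this comparison is the main obstacle.

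The first attempt is to use asphericity of $|L|$. Maps into $B\Lambda$ are classified up to homotopy by homomorphisms on $\pi_1$ up to conjugation, and contiguity classes of maps into $L$ are controlled by the edge-path group. On each $L_j$ one would try to produce the contiguity chain directly on the original complex $K$, using the edge-path group of $L$ together with the fact that $|\phi|$ and $|\psi|$ agree on $\pi_1$ up to conjugation over $|L_j|$. If that fails, the fallback is to read $\SD$ as stabilised under subdivision, which is how the proof of Theorem~\ref{Aspherical Space} tacitly treats $\scat$ ("which does not change $\scat(K)$"). That reading yields the equality up to the usual subdivision caveat.
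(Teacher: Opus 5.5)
Your proposal has a genuine gap, and it is the one you flag yourself: nothing takes you from $\SD(\phi\circ c^N,\psi\circ c^N)\le k$ on $\sd^N K$ back to $\SD(\phi,\psi)\le k$ on $K$.

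Neither remedy you offer closes it. The edge-path-group idea cannot work, because lying in one contiguity class is a strong-homotopy condition that $\pi_1$ data does not see. The ``stabilised'' reading proves a different statement.

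In fact no argument can close the gap, because the statement is false as written. Take $\Gamma=\Lambda=1$, which is torsion-free. Let $K=L$ be the triangulated disk of Figure~\ref{pic}, with $\phi=\id$ and $\psi=c_0$.
\begin{itemize}
\item $|K|$ is contractible, so $|K|=|L|=B1$ and $\D(|\id|,|c_0|)=0$.
\item For every vertex $v$ of $K$, the maximal simplices containing $v$ intersect only in $\{v\}$. So any map contiguous to $\id_K$ fixes every vertex.
\item Hence the contiguity class of $\id_K$ is $\{\id_K\}$, and $\SD(\id,c_0)\ge 1$.
\end{itemize}
This is the non-strongly-collapsible complex the paper itself cites from \cite{FTMVV}. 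The same example also refutes Theorem~\ref{Aspherical Space}.

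The paper's proof has exactly the same hole. It writes ``after replacing $K$ by a sufficiently fine barycentric subdivision, we may assume\ldots''. It then treats the simplicial approximation of $H_j$ as a simplicial homotopy between $\phi|_{L_j}$ and $\psi|_{L_j}$ themselves before invoking \cite[Proposition~8(2)]{FTGCMVV}. So you did not miss an idea the paper supplies; you found the point where its argument breaks.

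What your argument does prove, for finite $K$ and with no use of asphericity or torsion-freeness, is the stabilised equality
$$\D(|\phi|,|\psi|)=\min_N \SD(\phi\circ c^N,\psi\circ c^N).$$
\begin{itemize}
\item For $\ge$: every term satisfies $\SD(\phi\circ c^N,\psi\circ c^N)\ge \D(|\phi\circ c^N|,|\psi\circ c^N|)=\D(|\phi|,|\psi|)$. This uses that $|c^N|$ is homotopic to the identification homeomorphism $|\sd^N K|\cong|K|$.
\item For $\le$: combine your Lebesgue-number refinement with the classical fact that homotopic maps out of a finite complex have simplicial approximations, from some subdivision, lying in one contiguity class.
\end{itemize}

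One correction to your parenthetical: simplicial approximation subdivides the domain, not the target $L$. The ends of the approximating homotopy live on some further subdivision $\sd^{M}L_j$. You therefore get $\phi\circ c^{N+M}\sim\psi\circ c^{N+M}$ only on $\sd^{M}L_j$, which is harmless once $N$ is allowed to grow.
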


\begin{proof}
Since $\D(|\phi|,|\psi|)\leq \SD(\phi,\psi),$ it suffices to prove the reverse inequality.

Assume that $\D(|\phi|,|\psi|)=k.$ Then there exist open subsets
$U_0,\ldots,U_k\subseteq |K|$ such that $|K|=\bigcup_{j=0}^{k}U_j,$ and for every $j$, the restrictions $|\phi|_{U_j},\,|\psi|_{U_j}:U_j\longrightarrow |L|$ are homotopic.

Since $\Gamma,\Lambda$ are a torsion-free groups, we assume $K$ and $L$ are finte simplicial complexes. 

After replacing $K$ by a sufficiently fine barycentric subdivision, we may
assume that the cover is refined by finite simplicial subcomplexes
$L_0,\ldots,L_k\subseteq K$ such that
$|L_j|\subseteq U_j$ for every $j$.

Since the restrictions $|\phi|_{|L_j|},\,|\psi|_{|L_j|}$
are homotopic, there exists a homotopy $H_j:|L_j|\times I\longrightarrow |L|$
between them.

By the Simplicial Approximation Theorem (after subdividing the triangulation
of $|L_j|\times I$, if necessary), the homotopy $H_j$ admits a simplicial
approximation. Consequently, the simplicial maps
$\phi|_{L_j},\,\psi|_{L_j}:L_j\longrightarrow L$ are simplicially homotopic.

Since each $L_{j}$ is finite, by Proposition~8(2) of \cite{FTGCMVV}, simplicially homotopic simplicial maps belong to the same contiguity class. Therefore, $\phi|_{L_j}\sim_c\psi|_{L_j}$ for every $j$.

Hence each subcomplex $L_j$ satisfies the defining property of the
contiguity distance. Since $K=\bigcup_{j=0}^{k}L_j,$ it follows that
$\SD(\phi,\psi)\leq k=\D(|\phi|,|\psi|).$

Combining this with the general inequality
$\D(|\phi|,|\psi|)\leq \SD(\phi,\psi),$ we conclude that
$\D(|\phi|,|\psi|)=\SD(\phi,\psi).$
\end{proof}

Similarly to Corollary \ref{corAspherical}, we state the following Corollary in the setting of $m$-homotopy and contiguity distance without proof since the proof is almost identical with above Theorem.

\begin{corollary}\label{AsphMap1}
Let $\phi,\psi:K\to L$ be simplicial maps with their geometric realisations $|\phi|,|\psi|:|K|\to |L|$. Let $|K|=B\Gamma$ and $|L|=B\Lambda$ be aspherical spaces. If $\Gamma,\Lambda$ are torsion-free groups, then $$\D_{m}(|\phi|,|\psi|)=\SD_{m}(\phi,\psi).$$     
\end{corollary}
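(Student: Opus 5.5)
The plan is to run the argument of Theorem~\ref{AspherMap} again, with every cover element required to satisfy the $m$-condition rather than full homotopy, and to use the aspherical target to pass from $\D_m$ to homotopy on $1$-skeleta.

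The inequality $\D_{m}(|\phi|,|\psi|)\leq \SD_{m}(\phi,\psi)$ is exactly Theorem~\ref{geometric}, so only the reverse inequality needs work. Suppose $\D_{m}(|\phi|,|\psi|)=k$, with an open cover $U_0,\dots,U_k$ of $|K|$. For every map $h:Q\to U_j$ from an $m$-dimensional cell complex we then have $|\phi|\circ h\simeq|\psi|\circ h$.

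\begin{enumerate}
\item As in the proof of Theorem~\ref{AspherMap}, replace $K$ by a sufficiently fine barycentric subdivision. This gives finite subcomplexes $L_j$ with $|L_j|\subseteq U_j$ that still cover $K$. Theorem~\ref{SD-barycentric} and the invariance statements of Section~3 are meant to justify that subdivision does not increase $\SD_m$.
\item Fix an $m$-dimensional simplicial complex $P$ and a simplicial map $\eta:P\to L_j$. Then $|\eta|:|P|\to U_j$ is a map from an $m$-dimensional cell complex, so $|\phi|\circ|\eta|\simeq|\psi|\circ|\eta|$ as maps $|P|\to|L|$.
\item By the simplicial approximation theorem applied to this homotopy on $|P|\times I$, after subdividing $P$ if necessary, the maps $\phi\circ\eta$ and $\psi\circ\eta$ become simplicially homotopic. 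By \cite[Proposition~8(2)]{FTGCMVV} they then lie in the same contiguity class.
\item This shows that each $L_j$ has the defining property of $\SD_m$, and hence $\SD_m(\phi,\psi)\le k$.
\end{enumerate}

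An alternative uses the aspherical hypothesis more directly. By \cite{MVMLO}, $\D_m(|\phi|,|\psi|)=\D(|\phi|,|\psi|)$ for $m\ge1$ when the target is $B\Lambda$. Theorem~\ref{AspherMap} then gives $\D=\SD$, so
\[
\SD(\phi,\psi)=\D(|\phi|,|\psi|)=\D_m(|\phi|,|\psi|)\le\SD_m(\phi,\psi)\le\SD(\phi,\psi).
\]
The two ends are the same, so all these quantities are equal. The last step uses Theorem~\ref{geometric} together with Proposition~\ref{prop1}(1). This route is cleaner, and I would write the proof along these lines. It needs $m\ge1$; the case $m=0$ has to be excluded or treated separately.

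The main obstacle in the direct route is step 3. Simplicial approximation of a homotopy only yields a statement about $\phi\circ\eta\circ c$, where $c$ is an iterated subdivision map on $P$, and not about $\phi\circ\eta$ itself. I would handle this as at the end of the proof of Theorem~\ref{SD-barycentric}: since $c_P$ is contiguous to a simplicial approximation of the identity, composing with it does not change contiguity classes. The second route avoids this issue entirely, because it only relies on statements already established in the paper.
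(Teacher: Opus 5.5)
\textbf{There is a genuine gap, and it cannot be closed, because the statement is false.}

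\textbf{Counterexample.} Take $\Gamma=\Lambda=1$ and let $K=L$ be the disk of Figure~\ref{pic}, with triangles $\{0,1,3\},\{1,3,5\},\{1,2,5\},\{2,4,5\},\{0,2,4\},\{0,3,4\},\{3,4,5\}$. Put $\phi=\id$ and $\psi=c_0$.
\begin{itemize}
\item Since $|K|$ is contractible, $\D_m(|\id|,|c_0|)=0$ for every $m$.
\item Let $\eta:\partial\Delta^2\to K$ send $v_0,v_1,v_2$ to $0,1,2$. Each outer edge lies in exactly one triangle. So any $\eta'$ contiguous to $\eta$ has $\eta'(v_0)\in\{0,1,3\}\cap\{0,2,4\}=\{0\}$, and likewise $\eta'(v_1)=1$, $\eta'(v_2)=2$.
\item Hence the contiguity class of $\eta$ is $\{\eta\}$. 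Therefore $\eta\not\sim c_0\circ\eta$ and $\SD_1(\id,c_0)\geq 1$.
\item For larger $m$, wedge an $m$-simplex onto $\partial\Delta^2$, send it to $0$, and restrict any contiguity chain back to $\partial\Delta^2$.
\end{itemize}
If you want a nontrivial group: wedge a square onto $K$ at the vertex $3$ and let $\psi$ be the retraction crushing the disk to $3$. This gives the same failure with $\Gamma=\Lambda=\mathbb{Z}$. The same example also refutes Theorem~\ref{AspherMap}, since $\D(|\id|,|c_0|)=\cat(|K|)=0$ while $\SD(\id,c_0)\geq\SD_1(\id,c_0)\geq 1$.

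\textbf{Where your argument breaks.} Your direct route is essentially the argument the paper intends; it says the proof is almost identical to that of Theorem~\ref{AspherMap}. It fails in two places.
\begin{itemize}
\item \emph{Step 1.} Theorem~\ref{SD-barycentric} gives $\SD_m(\sd(\phi),\sd(\psi))\leq\SD_m(\phi,\psi)$, which is the useless direction. A good cover of $\sd^N K$ gives no bound on $\SD_m(\phi,\psi)$ on $K$ itself.
\item \emph{Step 3.} You located the real obstacle, but your fix does not work. The maps $\phi\circ\eta$ and $\phi\circ\eta\circ c_P$ have different domains, so they cannot be contiguous. Worse, precomposing with $c_P$ can merge contiguity classes. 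In the example, choose $c_P$ so that $\eta\circ c_P$ takes the values $0,0,1,1,2,2$ around the hexagon $\sd(\partial\Delta^2)$. Changing one vertex at a time through contiguous maps turns this into the map with values $4,3,5,5,4,4$. That map lies in the simplex $\{3,4,5\}$, so it is contiguous to a constant, whereas $\eta$ itself is rigid.
\end{itemize}
This is exactly the difference between homotopy and strong homotopy, and no simplicial-approximation argument bridges it.

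Your second route does not avoid the issue. It only moves it into Theorem~\ref{AspherMap}, whose proof consists of these same two steps. All that survives is the inequality $\D_m(|\phi|,|\psi|)\leq\SD_m(\phi,\psi)$ of Theorem~\ref{geometric}, plus the trivial case $m=0$.
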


Now we state the analog results of Theorem 2.19 in \cite{MVMLO} in the setting of contiguity distance. 

\begin{theorem}\label{SD1SD}
Let $\phi,\psi:K\to L$ be simplicial maps with their geometric realisations $|\phi|,|\psi|:|K|\to |L|$. Let $|K|=B\Gamma$ and $|L|=B\Lambda$ be aspherical spaces. If $\Gamma,\Lambda$ are a torsion-free group, then $$\SD_{1}(\phi,\psi)=\SD(\phi,\psi).$$ 
\end{theorem}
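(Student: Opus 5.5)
The plan is to chain the two identifications already established in this section with the corresponding result of \cite{MVMLO} in the continuous setting. By Corollary~\ref{AsphMap1} with $m=1$, under the standing hypotheses ($|K|=B\Gamma$, $|L|=B\Lambda$, with $\Gamma,\Lambda$ torsion-free) we have
\[
\SD_{1}(\phi,\psi)=\D_{1}(|\phi|,|\psi|).
\]
Next, since the target $|L|=B\Lambda$ is aspherical, Theorem~2.19 of \cite{MVMLO} gives $\D_{1}(f,g)=\D(f,g)$ for any maps $f,g$ into $B\Lambda$. Hence
\[
\D_{1}(|\phi|,|\psi|)=\D(|\phi|,|\psi|).
\]
Finally, Theorem~\ref{AspherMap} gives
\[
\D(|\phi|,|\psi|)=\SD(\phi,\psi).
\]
Concatenating the three equalities yields $\SD_1(\phi,\psi)=\SD(\phi,\psi)$.

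As a sanity check, and as a partial fallback, one inequality needs none of this machinery. Proposition~\ref{prop1}(1) gives $\SD_1(\phi,\psi)\le\SD(\phi,\psi)$ directly. So the real content is the reverse inequality
\[
\SD(\phi,\psi)\le \SD_1(\phi,\psi),
\]
and I would organize the write-up to make this explicit. The steps are: start from a cover realizing $\SD_1(\phi,\psi)=k$; pass to geometric realizations using Theorem~\ref{geometric}, which gives $\D_1(|\phi|,|\psi|)\le k$; apply the asphericity theorem of \cite{MVMLO} to get $\D(|\phi|,|\psi|)\le k$; and return to the simplicial side via Theorem~\ref{AspherMap} to get $\SD(\phi,\psi)\le k$. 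The advantage of this route is that it uses only the easy direction of Corollary~\ref{AsphMap1}, namely Theorem~\ref{geometric}, and leans on Theorem~\ref{AspherMap} for the hard step.

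The main obstacle is the passage from $\D$ back to $\SD$ inside Theorem~\ref{AspherMap}. That argument refines an open cover by subcomplexes after barycentric subdivision and then simplicially approximates homotopies, and it must not increase the number of sets. If I cannot simply cite it, I would reprove this step for the given cover. First, I would take the open sets $U_j$ and a subdivision $\sd^N K$ fine enough that each closed simplex lies in some $U_j$. Next, I would let $L_j$ be the subcomplex of simplices contained in $U_j$, and approximate the homotopies on $|L_j|\times I$. Then I would transport the cover back to $K$, using the fact that the contiguity class is insensitive to subdivision. For this transport I would invoke Proposition~\ref{barycentric}, Lemma~\ref{simp-approx}, and Theorem~\ref{SD-barycentric}. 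I expect this transport to be the delicate point, and I would handle it exactly as in the proof of Theorem~\ref{AspherMap}.
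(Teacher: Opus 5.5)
\textbf{The chain of identifications.} Your main chain is exactly the paper's proof: $\SD_1=\D_1$ by Corollary~\ref{AsphMap1}, $\D_1=\D$ by the continuous asphericity theorem, and $\D=\SD$ by Theorem~\ref{AspherMap}. Your fallback replaces Corollary~\ref{AsphMap1} by its easy half, Theorem~\ref{geometric}, which is a sensible economy.

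\textbf{The gap.} The link you flagged as delicate is not just delicate: the inequality $\SD(\phi,\psi)\le\D(|\phi|,|\psi|)$ is false under these hypotheses. So Theorem~\ref{AspherMap} can be neither cited nor reproved.

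Take the six-vertex triangulated disk $K$ of Figure~\ref{pic}, with $L=K$, $\phi=\id$ and $\psi=c_0$. Then $|K|$ is contractible, i.e.\ $B\Gamma$ for the trivial group, which is torsion-free.
\begin{itemize}
\item Every triangle is a maximal simplex, and each vertex is the intersection of the triangles containing it. Hence any map contiguous to $\id$ sends each triangle into itself and equals $\id$. So $\id\not\sim c_0$ and $\SD(\id,c_0)=1$ (the closed stars of $3$ and $2$ give two sets). On the other hand, $\D(|\id|,|c_0|)=\cat(|K|)=0$.
\item The same example breaks Corollary~\ref{AsphMap1} for $m=1$. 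The edges $\{0,1\},\{1,2\},\{0,2\}$ lie only in the triangles $\{0,1,3\},\{1,2,5\},\{0,2,4\}$. Therefore the map from the $3$-cycle graph onto $0,1,2$ is the only map in its contiguity class. This gives $\SD_1(\id,c_0)=1$, while $\D_1=0$.
\end{itemize}
Thus here $\D=\D_1=0<1=\SD_1=\SD$. The conclusion happens to hold in this example, but no argument that bounds $\SD$ by a continuous invariant can establish it.

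\textbf{Your proposed repair.} It runs in the wrong direction.
\begin{itemize}
\item Proposition~\ref{barycentric} and Theorem~\ref{SD-barycentric} only push contiguity from $K$ to $\sd K$, giving $\SD_m(\sd\phi,\sd\psi)\le\SD_m(\phi,\psi)$. Contiguity classes are not insensitive to subdivision, and nothing in the paper transports a good cover of $\sd^N K$ back to $K$.
\item Simplicial approximation of a homotopy on $|L_j|\times I$ produces a map on a subdivision. Its ends approximate $|\phi|$ and $|\psi|$ on $\sd^N L_j$, not $\phi|_{L_j}$ and $\psi|_{L_j}$ themselves.
\end{itemize}

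\textbf{What is missing.} You need an intrinsically combinatorial argument that a cover which works for all $1$-dimensional test maps also works for the restrictions $\phi|_{K_j},\psi|_{K_j}$. Neither your proposal nor the results it cites supply one.
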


\begin{proof}
By Theorem \ref{AspherMap}, $\D(|\phi|,|\psi|)=\SD(\phi,\psi).$ By Corollary \ref{AsphMap1}, $\D_{1}(|\phi|,|\psi|)=\SD_{1}(\phi,\psi).$ By Theorem 2.19 \cite{MVMLO}, $\D(|\phi|,|\psi|)=\D_{1}(|\phi|,|\psi|).$ Combining all equality give the statement of Theorem. This completes the proof. 
\end{proof}

\begin{remark}
Alternative proof of above Theorem~\ref{SD1SD} can be given using ideas of Theorem 2.19 \cite{MVMLO}. In this paper, we want to follow the uniform approach meaning first give the proof for LS-category (simplicial LS-category) and then extended to new setting homotopy (contiguity) distances. Our proof also works when $\Lambda$ has a torsion element, but $\Gamma$ is a torsion-free group. In general case, where the torsion element of $\Gamma$ does not vanish under group homomorphisms, our proof does not work. Second author has the interesting results in the case of LS-category of group epimorphisms, $cat(\phi)=\D(\phi,1)$ where $1$ is a trivial homomorphism \cite{Ku1}.     
\end{remark}

\section{Acknowledgement} 

This paper has been submitted in partial fulfillment of the requirements for the PhD degree at Bursa Technical University. The second author was partially supported by the grant No. AP25796111 of the Science Committee of the Ministry of Science and Higher Education of the Republic of Kazakhstan. \\

On behalf of all authors, the corresponding author states that there is no conflict of interest.\\

The manuscript has no associated data.

\end{document}